\newtheorem{theorem}{Theorem}[section]
\newtheorem{prop}{Proposition}[section]
\newtheorem{lemma}{Lemma}[section]
\newtheorem{corollary}{Corollary}[section]
\newtheorem{remark}{Remark}[section]
\newcommand{\cM}{\mathcal{M}}
\newcommand{\R}{{\mathbb R}}
\newcommand{\N}{{\mathbb N}}
\newcommand{\E}{{\mathbb E}}
\def\tn{\tilde n}
\title[Approximation of Stochastic Volterra Equations]{Approximation of Stochastic Volterra Equations with kernels of  completely monotone type}
\date{\today}
\author{Aur\'elien Alfonsi}
\address{Aur\'elien Alfonsi, CERMICS, Ecole des Ponts, Marne-la-Vall\'ee, France. MathRisk, Inria, Paris, France.}
\email{aurelien.alfonsi@enpc.fr}
\author{Ahmed Kebaier}
\thanks{This work benefited from the support of the ``chaire Risques financiers'', Fondation du Risque.}
\address{Ahmed Kebaier, Laboratoire de Math\'ematiques et Mod\'elisation d'\'Evry, CNRS, Univ Evry, Universit\'e Paris-Saclay, 91037, Evry, France}
\email{ahmed.kebaier@univ-evry.fr}
\subjclass[2010]{60H35 60G22 91G60 45D05}
\keywords{Stochastic Volterra Equation, Euler scheme, Strong error, Fractional kernel, Rough volatility models}
\begin{document}


\begin{abstract}
  In this work, we develop a multifactor approximation for $d$-dimensional Stochastic Volterra Equations (SVE) with Lipschitz coefficients and kernels of completely monotone type that may be singular. First, we prove an $L^2$-estimation between two SVEs with different kernels, which provides a quantification of the error between the SVE and any multifactor Stochastic Differential Equation (SDE) approximation. For the particular rough kernel case with Hurst parameter lying in $(0,1/2)$, we propose various approximating multifactor kernels, state their rates of convergence and illustrate their efficiency for the rough Bergomi model. Second, 
  we study a Euler discretization of the multifactor SDE and establish a convergence result towards the SVE that is uniform with respect to the approximating multifactor kernels. These obtained results lead us to build a new multifactor Euler scheme that reduces significantly the computational cost in an asymptotic way compared to the Euler scheme for SVEs.  Finally, we show that our multifactor Euler scheme outperforms the Euler scheme for SVEs for option pricing in the rough Heston model.  
\end{abstract}

\maketitle
\section{Introduction}
In recent years, there has been significant and growing interest in studying Stochastic Volterra Equations (SVE) since they arise in many applications such as mathematical finance, biology, physics, and engineering. Several studies have investigated the SVE under regular kernels, see e.g.\ Berger and Mizel \cite{BerMiz1,BerMiz2}, Protter \cite{Prot}, Pardoux and Protter \cite{ParPro}, and under non-regular kernels as well, see e.g.\ Cochran et al.\ \cite{CocLeePot}, 
Coutin and Decreusefond \cite{CouDec}, Decreusefond \cite{Dec}, Wang \cite{Wan}, Zhang \cite{Zhang},  and the references therein. More recently, much attention in quantitative finance has centered on using the SVE with a fractional  kernel having a small Hurst parameter $H\simeq 0.1$ to reproduce several  statistical stylized facts observed on real markets 
such as the path roughness of the volatility shown by   Gatheral et al.\ \cite{GatJaiRos} or the  pronounced smile of the implicit volatility curve occurring for very short time maturities (see e.g.\  Fukusawa \cite{Fuku1,Fuku2}, Bayer et al.\ \cite{BayFriGulHorSte, BaFrGa} and Friz et al. \cite{friz}). 
From a practical point of view, on the one hand Zhang \cite{Zhang_euler}  proposed to approximate the SVE with possibly singular kernels and globally Lipschitz  coefficients using an Euler discretization scheme. More recently, Richard et al.~\cite{RTY} have updated the study for the Euler scheme and proposed a Milstein discretization scheme  improving the rate of strong convergence.  On the other hand, inspired by the works of Carmona and Coutin \cite{CarCou1,CarCou2},  Harms and Stefanovits \cite{HarStef}, Abi Jaber and El Euch  \cite{EleAbi} proposed a multifactor  approximation scheme for the rough Heston model. Unlike the Euler approximation, the multifactor scheme that we develop in this paper for kernels of completely monotone type that may be singular, features a Markovian structure that allows the use of a wide range  of  usual techniques available in the literature, namely the Euler scheme for stochastic differential equations and  higher order schemes for the weak error (see e.g.\   Talay and Tubaro \cite{TaTu}, Kusuoka~\cite{Kusuoka}, Ninomiya and Victoir~\cite{NiVi}, Alfonsi~\cite{Alfonsi} and Shinozaki~\cite{Shinozaki}), Multilevel Monte Carlo methods (see e.g.\ Giles \cite{Gil}, Ben Alaya and Kebaier \cite{BAK}, Lemaire and Pagès \cite{LemPag1}), the variance reduction techniques (see e.g.\  Newton~\cite{Newton}, Jourdain and Lelong~\cite{JoLe},  Lemaire and Pagès \cite{LemPag2}, Belomestny et al.~\cite{BHNU}) etc.  This gives more flexibility for the approximation setting. 
\\\\
In this paper, we are interested in approximating the SVE in a general form given by 
\begin{equation}\label{SEV:intro}
X_t=x_0+\int_0^t G_1(t-s)b(X_s)ds+\int_0^t G_2(t-s)\sigma(X_s)dW_s, t\ge 0,
\end{equation}
where $x_0 \in \R^d$, $b : \R^d \to \R^d$, $\sigma : \R^d \to \mathcal{M}_d(\R)$ are globally Lipschitz continuous coefficients, $W$ is a standard Brownian motion in $\R^d$ and $G_1, G_2: \R^*_+\to  \mathcal{M}_d(\R)$ are kernels of the form 
\begin{equation}\label{kernel_form}
  G_j(t)= \int_{\R_+} e^{-\rho t} M_j(\rho) \,\lambda(d\rho), \quad\text{ for } t\in]0,+\infty[,
\end{equation}
with
bounded measurable functions $M_1,M_2:\R_+ \to \mathcal{M}_d(\R)$ and a measure~$\lambda$ on $\R_+$ satisfying  $\int_{\R_+} e^{-\rho t}  \,\lambda(d\rho) <+\infty$. Note that when $M_1$ and $M_2$ are non-negative scalar functions, $G_1$ and $G_2$ are known in the literature as completely monotone kernels. In particular, the singular fractional kernel  with Hurst parameter that lies in $(0,1/2)$ is covered within this framework.  More precisely, we approximate the solution to \eqref{SEV:intro} by a multifactor approximation that corresponds to a stochastic differential equation in a higher dimension. We prove a strong convergence error for our multifactor approximation scheme that holds in this general $d$-dimensional setting. To do so, we proceed in two steps:   first we truncate the integrals defining $G_j$ and second we discretize  the measure $\lambda$ on the truncated interval $[0,K]$.  We denote respectively by $X^K$ and $\hat X^K$ the corresponding SVE processes. Thus, in Section~\ref{Sec_Strong} we derive a first strong convergence on the error between the processes  $X$ and its truncated version $X^K$  and a second one for the error between $X^K$ and $\hat X^K$. We also obtain a general non asymptotic result on the approximation of~\eqref{SEV:intro} given any approximation of $G_1$ and $G_2$, see Theorem~\ref{thm_non_asymp}. Though being a natural question, it seems to us that such a result has not been yet stated in the literature. The next section is dedicated to study the multifactor approximation approach when combined with a Euler scheme on te regular grid with $N$~time steps. On the one hand, we analyse the error between two Euler schemes with different kernels, see Theorem~\ref{thm_non_asymp_scheme}. This gives, combined with the recent convergence results of Richard et al.~\cite{RTY}, a uniform convergence to the SVE with respect to the approximating kernels, see Corollary~\ref{cor_euler}. Interestingly, it turns out from our strong error analysis that for the rough kernel with $H\simeq 0.1$, there is no need to have a very accurate approximation of the kernel to run a multifactor Euler scheme since the main error comes from the discretization. On the other hand, we show in Theorem~\ref{prop_euler} that the Euler scheme on the SVE~\eqref{SEV:intro} for kernels of completely monotone type coincides with the Euler scheme on the corresponding multifactor SDE. Thus, by approximating kernels~\eqref{kernel_form} by a finite combination of~$n$ ($n<<N$) exponentials, we can reduce the computational cost from $N^2$ to $n\times N$. These new results hold for general $d$-dimensional Stochastic Volterra Equations and any multifactor approximation provided that it is accurate enough.  
Then, Section~\ref{Sec_Rough} is devoted to  the  study of the rough kernel, where we propose various procedures to obtain approximating kernels and give their precise rates of convergence.  We illustrate in Section~\ref{Sec_Num} our theoretical results on the kernel approximation and give a first financial application with the celebrated rough Bergomi model. In a second financial application on pricing options in the rough Heston model, based on our theoretical results of Section~\ref{Sec_Euler}, we provide a new scheme with a reduced computational cost that has the same accuracy as the Euler scheme for SVEs.  Then, we combine the kernel approximation and the Euler scheme to illustrate the efficiency of our approach to calculate option prices in the rough Heston model. It is worth stressing that the gain with respect to the Euler scheme for SVEs is important: as an example, for a precision of order $10^{-3}$, we get a computational time $5$~times smaller, see Table~\ref{table_1}.  We also compare on the same example the very recent hybrid multifactor scheme by~R{\o}mer~\cite{Romer} to our scheme and show it has a better performance on a benchmark case for pricing a European type options.

\section{General Framework and preliminary results}\label{Sec_GF}

We consider the SVE in a general form given by 
\begin{equation}\label{SVE}
X_t=x_0+\int_0^t G_1(t-s)b(X_s)ds+\int_0^t G_2(t-s)\sigma(X_s)dW_s, t\ge 0,
\end{equation}
where $x_0 \in \R^d$, $b : \R^d \to \R^d$, $\sigma : \R^d \to \mathcal{M}_d(\R)$ are globally Lipschitz continuous coefficients i.e.
\begin{equation} \label{cd:lip}\exists L>0, \forall x,y \in \R^d, |b(x)-b(y)|+\|\sigma(x)-\sigma(y)\|\le L|x-y|,
\end{equation}
$W$ is a standard Brownian motion in $\R^d$ and $G_1, G_2: \R^*_+\to  \mathcal{M}_d(\R)$ are kernels that satisfy
\begin{equation}\label{cond_kernel_SVE}
\int_0^T \left( \|G_1(s)\|+\|G_2(s)\|^2 \right)ds <\infty,  \text{ for every } T\in\mathbb R_+.
\end{equation}
Then, we can apply Theorem~3.1~\cite{Zhang} and get that there exists a unique strong solution to~\eqref{SVE}. Note that if $\int_0^T \left(\|G_1(s)\|+\|G_2(s)\|^2 \right)ds<\infty$ for some $T>0$, then there exists a unique strong solution $(X_t,t\in[0,T])$ up to time~$T$. Obviously, those conditions do not depend on the choice of the norms $|\cdot|$ and $\|\cdot\|$ on $\R^d$ and $\mathcal{M}_d(\R)$. In this paper, we will use the Euclidean norm on~$\R^d$ and the Frobenius norm on~$\mathcal{M}_d(\R)$, and we recall that we have
\begin{equation}\label{Frob_prop}
  \forall A,B \in \mathcal{M}_d(\R), \forall x\in \R^d,\  \|AB\|\le\|A\|\|B\| \text{ and } |Ax|\le \|A\||x|.
\end{equation}

In this paper, we are interested in the approximation of~\eqref{SVE} when there exists bounded measurable functions $M_1,M_2:\R_+ \to \mathcal{M}_d(\R)$ and a measure~$\lambda$ on $\R_+$ satisfying 
\begin{equation}\label{cond_lambda}\forall t>0, \bar{G}(t)= \int_{\R_+} e^{-\rho t}  \,\lambda(d\rho) <+\infty,
\end{equation}
such that 
\begin{equation}\label{def_kernels}
  G_j(t)= \int_{\R_+} e^{-\rho t} M_j(\rho) \,\lambda(d\rho), \quad\text{ for } t\in]0,+\infty[. 
\end{equation}
We note ${\bf M}_j=\sup_{\rho \ge 0}\|M_j(\rho)\|$ and trivially have $\|G_j(t)\|\le {\bf M}_j\bar{G}(t)$. We will assume through the paper that $\bar{G}\in L^2_{\rm loc}(\R_+^*,\R_+)$, i.e.
\begin{equation}\label{cond_Gbar}
  \forall T>0, \int_0^T \bar{G}(t)^2 dt < \infty,
\end{equation}
and therefore condition~\eqref{cond_kernel_SVE} is satisfied. 

    In the one-dimensional case, the kernel $G_j$ is completely monotone when $M_i\ge 0$ by the celebrated Hausdorff--Bernstein--Widder theorem \cite[Theorem IV.12b]{Wid}. In this paper, we will be particularly interested by the rough kernel \begin{equation}\label{def_rough_kernel}G_{\lambda_H}(t)=\frac{t^{H-1/2}}{\Gamma(H+1/2)},
    \end{equation}
    with parameter $H\in(0,1/2)$. It satisfies $G_{\lambda_H}(t)=\int_{\R_+} e^{-\rho t}  \,\lambda_H(d\rho)$
with
\begin{equation}\label{frac_mes}\lambda_H(d\rho)=c_H{\rho^{-H-1/2}}d\rho,  \text{ and } c_H:=\frac{1}{\Gamma(H+1/2)\Gamma(1/2-H)}.
\end{equation}
The principle of the approximation is rather simple. We approximate the measure~$\lambda$ by a finite discrete measure. Then, the next proposition ensures that the Stochastic Volterra Equation~\eqref{SVE} can be obtained from the solution of a classical SDE, for which many numerical methods have been developed. Thus, the goal of the paper is to analyze the error made when replacing the measure~$\lambda$ by a finite discrete measure. We will focus in this paper on strong error estimates.
\begin{prop}\label{prop:SVE-SDE} Let us assume that $\lambda(d\rho)=\sum_{i=1}^n \alpha_i \delta_{\rho_i}(d\rho)$  with $\alpha_i\ge 0$ and $\rho_1<\dots<\rho_n$.
  \begin{enumerate}
  \item Let us assume $M_1=M_2=M$ and $\textup{rank}([\alpha_1M(\rho_1) \dots \alpha_nM(\rho_n)])=d$ so that there exist $x_0^1,\dots,x_0^n \in \R^d$ such that $\sum_{i=1}^n \alpha_i M(\rho_i) x_0^i =  x_0$. Then, the solution of~\eqref{SVE} is given by $\sum_{i=1}^n \alpha_i M(\rho_i)X_t^{\rho_i}$, where $(X_t^{\rho_1},\dots,X_t^{\rho_n})$ is the solution of the  $(n\times d)$-dimensional Stochastic Differential Equation defined by
    \begin{equation}\label{SDE_approx_1}X_t^{\rho_i}=x_0^i - \int_0^t \rho_i (X_s^{\rho_i}-x_0^i)ds + \int_0^t b\left(\sum_{j=1}^n \alpha_j M(\rho_j)X_s^{\rho_j}\right)ds +  \int_0^t \sigma \left(\sum_{j=1}^n \alpha_j M(\rho_j)X_s^{\rho_j}\right)dW_s. 
    \end{equation}
  \item Let us assume  $\textup{rank}([\alpha_1M_1(\rho_1) \dots \alpha_n M_1(\rho_n) \ \alpha_1M_2(\rho_1) \dots \alpha_n M_2(\rho_n) ])=d$ so that there exist $x_0^1,\dots,x_0^n,y_0^1,\dots,y_0^n \in \R^d$ such that $\sum_{i=1}^n \alpha_i [ M_1(\rho_i) x_0^i+M_2(\rho_i) y_0^i] =  x_0$. Then, the solution of~\eqref{SVE} is given by $X_t=\sum_{i=1}^n \alpha_i M_1(\rho_i)X_t^{\rho_i}+ \sum_{i=1}^n \alpha_i M_2(\rho_i)Y_t^{\rho_i}$, where $(X_t^{\rho_1},Y_t^{\rho_1},\dots,X_t^{\rho_n},Y_t^{\rho_n})$ is the solution of the  $(2n\times d)$-dimensional Stochastic Differential Equation defined by
    \begin{align}
      X_t^{\rho_i}&=x_0^i - \int_0^t \rho_i (X_s^{\rho_i}-x_0^i)ds + \int_0^t b\left(\sum_{j=1}^n \alpha_j M_1(\rho_j)X_s^{\rho_j}+\sum_{j=1}^n \alpha_j M_2(\rho_j)Y_s^{\rho_j}\right)ds, \notag \\
      Y_t^{\rho_i}&=y_0^i - \int_0^t \rho_i (Y_s^{\rho_i}-y_0^i)ds +  \int_0^t \sigma \left(\sum_{j=1}^n \alpha_j M_1(\rho_j)X_s^{\rho_j}+\sum_{j=1}^n \alpha_j M_2(\rho_j)Y_s^{\rho_j}\right)dW_s. \label{SDE_approx_2}
    \end{align}
  \end{enumerate}
\end{prop}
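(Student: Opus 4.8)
The plan is to argue by \emph{verification plus uniqueness}: solve the linear (Ornstein--Uhlenbeck type) system explicitly by variation of constants, recombine the components, check that the recombination solves the Volterra equation~\eqref{SVE}, and then invoke the uniqueness statement of Theorem~3.1 in~\cite{Zhang}. Note first that for the discrete measure $\lambda=\sum_{i=1}^n\alpha_i\delta_{\rho_i}$ formula~\eqref{def_kernels} just reads $G_j(u)=\sum_{i=1}^n\alpha_i e^{-\rho_i u}M_j(\rho_i)$, and $\bar G(u)=\sum_{i=1}^n\alpha_i e^{-\rho_i u}$ is continuous, hence bounded on compacts, so~\eqref{cond_Gbar} holds and~\eqref{SVE} is well-posed.

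For part~(1), since $(x^{\rho_1},\dots,x^{\rho_n})\mapsto\sum_{j=1}^n\alpha_j M(\rho_j)x^{\rho_j}$ is linear and hence Lipschitz, the coefficients of~\eqref{SDE_approx_1} are globally Lipschitz on $\R^{nd}$, so~\eqref{SDE_approx_1} has a unique strong solution $(X^{\rho_1},\dots,X^{\rho_n})$; set $\bar X_t:=\sum_{j=1}^n\alpha_j M(\rho_j)X^{\rho_j}_t$. Each component then satisfies $d(X^{\rho_i}_t-x_0^i)=-\rho_i(X^{\rho_i}_t-x_0^i)\,dt+b(\bar X_t)\,dt+\sigma(\bar X_t)\,dW_t$ with $X^{\rho_i}_0-x_0^i=0$, and applying It\^o's formula to $e^{\rho_i t}(X^{\rho_i}_t-x_0^i)$ gives the closed form
\[
X^{\rho_i}_t=x_0^i+\int_0^t e^{-\rho_i(t-s)}b(\bar X_s)\,ds+\int_0^t e^{-\rho_i(t-s)}\sigma(\bar X_s)\,dW_s .
\]
Next I would multiply by $\alpha_i M(\rho_i)$ and sum over $i$: the constant terms give $\sum_i\alpha_i M(\rho_i)x_0^i=x_0$ by the choice of the $x_0^i$ (which exists precisely because the rank hypothesis makes $[\alpha_1M(\rho_1)\ \cdots\ \alpha_nM(\rho_n)]$ surjective), while exchanging the finite sum with the Lebesgue, resp.\ It\^o, integral and using $G_1(u)=G_2(u)=\sum_i\alpha_i e^{-\rho_i u}M(\rho_i)$ turns the two integral terms into $\int_0^t G_1(t-s)b(\bar X_s)\,ds$ and $\int_0^t G_2(t-s)\sigma(\bar X_s)\,dW_s$. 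Hence $\bar X$ solves~\eqref{SVE}, and by uniqueness $\bar X=X$, which is the claim.

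Part~(2) follows the same scheme with two Ornstein--Uhlenbeck families: here $X^{\rho_i}$ carries only the drift $b(\bar X_t)\,dt$ and $Y^{\rho_i}$ only the martingale part $\sigma(\bar X_t)\,dW_t$, where now $\bar X_t:=\sum_i\alpha_i M_1(\rho_i)X^{\rho_i}_t+\sum_i\alpha_i M_2(\rho_i)Y^{\rho_i}_t$. Variation of constants yields $X^{\rho_i}_t=x_0^i+\int_0^t e^{-\rho_i(t-s)}b(\bar X_s)\,ds$ and $Y^{\rho_i}_t=y_0^i+\int_0^t e^{-\rho_i(t-s)}\sigma(\bar X_s)\,dW_s$; multiplying by $\alpha_i M_1(\rho_i)$, resp.\ $\alpha_i M_2(\rho_i)$, summing, and using $G_1(u)=\sum_i\alpha_i e^{-\rho_i u}M_1(\rho_i)$, $G_2(u)=\sum_i\alpha_i e^{-\rho_i u}M_2(\rho_i)$ and $\sum_i\alpha_i[M_1(\rho_i)x_0^i+M_2(\rho_i)y_0^i]=x_0$ shows $\bar X$ solves~\eqref{SVE}; uniqueness concludes.

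I do not expect a genuine obstacle here: the whole proof is a variation-of-constants identity combined with the uniqueness theorem. The only two points requiring a line of care are (i) checking that the recombined coefficients of the high-dimensional SDE are globally Lipschitz, so that~\eqref{SDE_approx_1}--\eqref{SDE_approx_2} are well-posed, and (ii) justifying the interchange of the finite sum with the stochastic integral, which is immediate by linearity of the It\^o integral. I would also note that the rank/surjectivity assumptions are used only to construct the initial data $x_0^i$ (and $y_0^i$) and play no role in the dynamics.
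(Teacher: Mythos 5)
Your proposal is correct and proceeds exactly as the paper's proof: Lipschitz well-posedness of the multifactor SDE, variation of constants giving $X^{\rho_i}_t=x_0^i+\int_0^t e^{-\rho_i(t-s)}b(\cdot)\,ds+\int_0^t e^{-\rho_i(t-s)}\sigma(\cdot)\,dW_s$, left-multiplication by $\alpha_i M(\rho_i)$ (resp.\ $\alpha_i M_1(\rho_i)$, $\alpha_i M_2(\rho_i)$), summation, and then strong uniqueness via Theorem~3.1 of~\cite{Zhang}. Your extra remarks on the role of the rank assumption and on exchanging the finite sum with the stochastic integral are correct but do not change the argument.
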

\begin{proof}
  Let us first consider the case $M_1=M_2=M$. The SDE~\eqref{SDE_approx_1} has Lipschitz coefficients and therefore has a unique strong solution. Since $d\left(e^{\rho_i t}(X_t^{\rho_i}-x_0^i)  \right) =e^{\rho_i t}b\left(\sum_{j=1}^n \alpha_j M(\rho_j)X_t^{\rho_j}\right)dt + e^{\rho_i t}  \sigma \left(\sum_{j=1}^n \alpha_j M(\rho_j)X_s^{\rho_j}\right)dW_t$, we get
  $$X_t^{\rho_i}=x_0^i + \int_0^t e^{-\rho_i (t-s)} b\left(\sum_{j=1}^n \alpha_j M(\rho_j)X_s^{\rho_j}\right)ds +  \int_0^t  e^{-\rho_i (t-s)} \sigma \left(\sum_{j=1}^n \alpha_j M(\rho_j)X_s^{\rho_j}\right)dW_s. $$
  We left multiply this equation by $\alpha_i M(\rho_i)$ and then sum over~$i$ to obtain that $\sum_{i=1}^n \alpha_i M(\rho_i)X_t^{\rho_i}$ solves~\eqref{SVE}. The strong uniqueness result (Theorem 3.1~\cite{Zhang}) gives the claim. 

  In the general case, we similarly get
  \begin{align*}
    X_t^{\rho_i}&=x_0^i + \int_0^t e^{-\rho_i (t-s)} b\left(\sum_{j=1}^n \alpha_j M_1(\rho_j)X_s^{\rho_j}+\sum_{j=1}^n \alpha_j M_2(\rho_j)Y_s^{\rho_j}\right)ds,  \\
      Y_t^{\rho_i}&=y_0^i +   \int_0^t  e^{-\rho_i (t-s)} \sigma \left(\sum_{j=1}^n \alpha_j M_1(\rho_j)X_s^{\rho_j}+\sum_{j=1}^n \alpha_j M_2(\rho_j)Y_s^{\rho_j}\right) dW_s. 
  \end{align*}
We then left multiply the first equation by $\alpha_i M_1(\rho_i)$ and the second equation by $\alpha_i M_2(\rho_i)$, and sum over $i$ to get the claim. 
\end{proof}

\section{Strong error analysis for the approximation}\label{Sec_Strong}

To analyse the error between the SVE and its approximation by using kernels $\hat{G}_j(t)$ supported by a finite discrete measure (as in Proposition~\ref{prop:SVE-SDE}), we proceed in two steps. First, we analyse the truncation error when replacing the kernels by the kernels obtained by truncating the measure~$\lambda$ in~\eqref{def_kernels}. Second, we analyse the error between the SVE with the truncated kernels and the approximating kernels.

For any $K>0$, we introduce then the truncated convolution kernels $G_j^K:\R_+ \rightarrow \mathcal{M}_d(\R)$, $j\in \{1,2\}$, that are defined as follows:
\begin{equation}\label{def_truncated_kernels}
G^K_j(t)=\int_{[0,K)}e^{-\rho t} M_j(\rho)\lambda(d\rho), \quad \mbox{for all } t\ge 0.
\end{equation}
Thus, the kernel $G^K_j$ approximates the kernel $G_j$ defined by~\eqref{def_kernels} as $K\to +\infty$. Since ${\bf M}_j=\sup_{\rho\ge 0}\|M_j(\rho)\| <\infty$, we have the following uniform bound:
\begin{equation}\label{unifbound_GK}\forall K>0, \|G^K_j(t)\|\le {\bf M}_j \bar{G}(t) \text{ with } \bar{G}(t)=\int_{\R_+}e^{-\rho t} \lambda(d\rho).
\end{equation}

We introduce the stochastic convolution equation  $X^K$ associated to the kernels $G^K_j$, $j\in \{1,2\}$, given by
\begin{equation}\label{sde:K}
X^K_t=x_0+\int_0^t G_1^K(t-s)b(X^K_s)ds+\int_0^tG_2^K(t-s)\sigma(X^K_s)dW_s, \qquad x_0\in \R^d.
\end{equation}
We also consider for $c>0$, the resolvant of second kind ${\rm E}_c(t)$ that solves the equation \begin{equation}\label{resolvante}{\rm E}_c(t)= \bar{G}^2(t) +\int_0^t c \bar{G}^2(t-s){\rm E}_c(s)ds.
\end{equation}
Since $ \bar{G}^2 \in L^1_{\rm loc}(\R_+^*,\R_+)$ by~\eqref{cond_Gbar}, we get that ${\rm E}_c(t)$ is well defined and belongs also to  $L^1_{\rm loc}(\R_+^*,\R_+)$ (see Subsection A.3~\cite{EleAbi} and Theorem 2.3.1~\cite{GriLonStaf}).

\begin{prop}\label{prop:tight_r(K)} 
  Let $\lambda$ be a positive measure such that
  \begin{equation}\tag{H1}\label{eq_H1}\forall K>0, \ r(K):=\int_{[K,+\infty)} \int_{[K,+\infty)} \frac 1 {\rho_1+\rho_2} \lambda(d\rho_1) \lambda(d\rho_2) <\infty.
  \end{equation}
  Then, for any $T>0$,  there exists a positive constant $C$ that depends on $T$, $\lambda$, ${\bf M}_1$, ${\bf M}_2$, $L$, $|b(0)|$ and $\|\sigma(0)\|$ such that 
\begin{equation}\label{Truncation_estimate}
\forall t \in [0,T], \ \E\left[\big| X_t-X^{K}_t\big|^2\right] \leq C \times r(K).
\end{equation}
\end{prop}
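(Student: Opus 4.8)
The plan is to run a generalized Gronwall argument for $f(t):=\E\big[|X_t-X_t^K|^2\big]$ with convolution kernel $\bar{G}^2$, the inhomogeneous term being produced entirely by the tail of $\lambda$ beyond $K$ and measured by $r(K)$. First I would introduce the tail kernel $G_j^{\ge K}(t):=G_j(t)-G_j^K(t)=\int_{[K,\infty)}e^{-\rho t}M_j(\rho)\,\lambda(d\rho)$, note $\|G_j^{\ge K}(t)\|\le {\bf M}_j\,\bar{G}^{\ge K}(t)$ with $\bar{G}^{\ge K}(t):=\int_{[K,\infty)}e^{-\rho t}\lambda(d\rho)\le\bar{G}(t)$, and record the one genuinely structural computation, which by Tonelli reads
\[
\int_0^T \bar{G}^{\ge K}(t)^2\,dt=\int_{[K,\infty)}\int_{[K,\infty)}\frac{1-e^{-(\rho_1+\rho_2)T}}{\rho_1+\rho_2}\,\lambda(d\rho_1)\lambda(d\rho_2)\le r(K).
\]
This is exactly why $r(K)$ in~\eqref{eq_H1} is the right quantity, and where hypothesis~\eqref{eq_H1} is used to guarantee finiteness. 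I also record the a priori bounds $\sup_{t\le T}\E[|X_t|^2]<\infty$ and $\sup_{t\le T}\E[|X_t^K|^2]<\infty$ \emph{uniformly in $K$}: these follow from the same Gronwall/resolvent estimate applied directly to~\eqref{SVE} and~\eqref{sde:K}, using $\|G_j\|\le {\bf M}_j\bar{G}$ and $\|G_j^K\|\le {\bf M}_j\bar{G}$ from~\eqref{unifbound_GK}, the linear growth of $b,\sigma$ from~\eqref{cd:lip}, and~\eqref{cond_Gbar}; in particular $B:=\sup_{s\le T}\E[|b(X_s)|^2]$ and $S:=\sup_{s\le T}\E[\|\sigma(X_s)\|^2]$ are finite and depend only on the data listed in the statement.

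Next I would subtract~\eqref{sde:K} from~\eqref{SVE} and split
\begin{align*}
X_t-X_t^K&=\int_0^t G_1^{\ge K}(t-s)b(X_s)\,ds+\int_0^t G_2^{\ge K}(t-s)\sigma(X_s)\,dW_s\\
&\quad+\int_0^t G_1^K(t-s)\big(b(X_s)-b(X_s^K)\big)\,ds+\int_0^t G_2^K(t-s)\big(\sigma(X_s)-\sigma(X_s^K)\big)\,dW_s,
\end{align*}
and bound $\E[|X_t-X_t^K|^2]$ by four times the sum of the four squared $L^2$-norms. For the two ``kernel-error'' terms I apply Cauchy--Schwarz in the form $\big(\int_0^t g\,h\big)^2\le t\int_0^t g^2h^2$ (which only uses $\bar{G}\in L^2_{\rm loc}$, not $L^1_{\rm loc}$) to the drift term and the It\^o isometry together with~\eqref{Frob_prop} to the stochastic term, then pull out $B$, resp.\ $S$, and invoke the estimate of Step~1 to bound each by a constant times $r(K)$. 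For the two ``Lipschitz'' terms I use the same Cauchy--Schwarz / It\^o isometry with~\eqref{cd:lip} and $\|G_j^K\|\le {\bf M}_j\bar{G}$ to get a bound $\le c\int_0^t\bar{G}(t-s)^2 f(s)\,ds$. Altogether this yields
\[
f(t)\le C_0\,r(K)+c\int_0^t\bar{G}(t-s)^2 f(s)\,ds,\qquad t\in[0,T],
\]
with $C_0$ and $c$ depending only on $T,\lambda,{\bf M}_1,{\bf M}_2,L,|b(0)|,\|\sigma(0)\|$.

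Finally, since $f$ is bounded on $[0,T]$ by the uniform a priori bounds, I would apply the generalized Gronwall lemma attached to the resolvent ${\rm E}_c$ of~\eqref{resolvante} (see Subsection~A.3~\cite{EleAbi} and Theorem~2.3.1~\cite{GriLonStaf}) to get $f(t)\le C_0\, r(K)\big(1+c\int_0^t{\rm E}_c(s)\,ds\big)$ for $t\in[0,T]$, and $\int_0^T{\rm E}_c(s)\,ds<\infty$ because ${\rm E}_c\in L^1_{\rm loc}(\R_+^*,\R_+)$. Taking $C:=C_0\big(1+c\int_0^T{\rm E}_c(s)\,ds\big)$ then gives~\eqref{Truncation_estimate}.

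I expect the main obstacle to be organizational rather than deep: keeping every constant independent of $K$ — which works because all the $K$-dependence is confined to $r(K)$ and to the pointwise domination $\bar{G}^{\ge K}\le\bar{G}$ — and setting up the uniform-in-$K$ a priori $L^2$ bound, which is itself a Gronwall-resolvent estimate of exactly the type used in the last step. The only genuinely new ingredient is the inequality of Step~1 reducing the tail-kernel $L^2$-mass to $r(K)$; everything else is standard SVE estimation.
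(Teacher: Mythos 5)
Your proposal is correct and follows essentially the same route as the paper's proof: the same four-term decomposition of $X_t-X_t^K$ (tail-kernel terms acting on $b(X_s),\sigma(X_s)$ plus Lipschitz-difference terms with $G_j^K$), Jensen/Cauchy--Schwarz and the It\^o isometry, the Tonelli computation reducing the tail-kernel $L^2$-mass to $r(K)$, and the generalized Gronwall lemma with the resolvent ${\rm E}_{c}$ of~\eqref{resolvante}. The only (harmless) difference is that you also record a uniform-in-$K$ moment bound for $X^K$, which is not actually needed since, as in the paper, the tail-kernel terms involve $X$ alone and only $\sup_{t\le T}\E[|X_t|^2]$ enters the constant.
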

\begin{proof} 
We note $\Delta^K_j(t)=G_j(t)-G^K_j(t)$.  We have for all $t\ge0$
\begin{eqnarray*}
|X_t-X^{K}_t|^2&\le& 4\bigg(
\bigg|\int_0^t  \Delta^K_1(t-s) b(X_s)ds\bigg|^2 +\bigg|\int_0^t   \Delta^K_2(t-s)  \sigma(X_s)dW_s\bigg|^2\\
&+&\bigg|\int_0^t G^K_1(t-s)\big[ b(X_s)-b(X^{K}_s)\big]ds\bigg|^2 + \bigg|\int_0^t G^K_2(t-s)\big[ \sigma(X_s)-\sigma(X^{K}_s)\big]dW_s\bigg|^2
\bigg),
\end{eqnarray*}
by using the inequality $(a+b+c+d)^2\le 4(a^2+b^2+c^2+d^2)$. Then, we get by using Jensen's inequality, the It\^o isometry and~\eqref{Frob_prop}:
\begin{align*}
&\E\left[ |X_t-X^{K}_t|^2\right]\le 4t \int_0^t  \|\Delta^K_1(t-s)\|^2 \E[|b(X_s)|]^2ds + 4 \int_0^t  \|\Delta^K_2(t-s)\|^2 \E[\|\sigma(X_s)\|^2]ds\\
&+4t \int_0^t \|G^{K}_1(t-s)\|^2 \E[ \big| b(X_s)-b(X^{K}_s)\big|^2]ds + 4 \int_0^t \|G^{K}_2(t-s)\|^2\E[\| \sigma(X_s)-\sigma(X^{K}_s)\|^2]ds.
\end{align*}
Then, by using the Lipschitz property~\eqref{cd:lip}  we get for $c_1:=8(T\vee 1)\big(|b(0)|^2\vee\|\sigma(0)\|^2 +L^2 \sup_{t\in [0,T]}\E[|X_t|^2]\big)$  and 
$c_2:=4L^2({\bf M}_1^2 T+{\bf M}_2^2)$ 
\begin{equation*}
  \E |X_t-X^{K}_t|^2\le c_1  \int_0^t \|\Delta^K_1(t-s)\|^2+ \|\Delta^K_2(t-s)\|^2 ds+ c_2\int_0^t \bar{G}(t-s)^2 \E \left[| X_s-X^{K}_s|^2\right]ds.
\end{equation*}
Hence, we use the generalized Gronwall\footnote{Note that if $\lambda(\R_+)<\infty$ then $\bar{G}(t-s)^2\le \lambda(\R_+)^2$ and then we can use the classical Gronwall lemma. This argument cannot be applied for the rough kernels.}  Lemma (see e.g. \cite[Theorem 9.8.2]{GriLonStaf}) 
to get
\begin{equation*}
\E |X_t-X^{K}_t|^2\le c_1 \Big(\int_0^t \|\Delta^K_1(t-s)\|^2+ \|\Delta^K_2(t-s)\|^2  ds \Big) \Big( 1+ \int_0^T{\rm E}_{c_2}(s)ds\Big),
\end{equation*}
where ${\rm E}_{c_2}$ is  defined by~\eqref{resolvante}. Since $\Delta_j^K(t-s)=\int_{[K,+\infty)}M_j(\rho) e^{-\rho (t-s)} \lambda(d\rho)$, we have
  $\|\Delta_j^K(t-s)\|\le {\bf M}_j \int_{[K,+\infty)} e^{-\rho (t-s)} \lambda(d\rho)$ and thus 
\begin{align*}
  \int_0^t  \|\Delta_j^K(t-s)\|^2 ds &\le  {\bf M}_j^2\int_0^t \int_{[K,+\infty)}\int_{[K,+\infty)}e^{-(\rho_1+\rho_2)(t-s)}\lambda(d\rho_1) \lambda(d\rho_2) ds\\&={\bf M}_j^2 \int_{[K,+\infty)}\int_{[K,+\infty)}\frac{1- e^{-(\rho_1+\rho_2)t}}{\rho_1+\rho_2}\lambda(d\rho_1) \lambda(d\rho_2)\le {\bf M}_j^2 r(K).
\end{align*}
We therefore get~\eqref{Truncation_estimate} with $C=c_1({\bf M}_1^2+{\bf M}_2^2) \Big( 1+ \int_0^T{\rm E}_{c_2}(s)ds\Big)$. 
\end{proof}
One interest to work with truncation is that the family $G_1^K$ and $G_2^K$ are uniformly bounded in $L^2([0,T])$. However, the proof of Proposition~\ref{prop:tight_r(K)} can easily be extended to obtain the approximation error for  general kernels $\hat{G}_1$ and $\hat{G_2}$ that satisfy
\begin{equation}\label{hypkernels}
  \exists \bar{C} \in \R_+^*, \ \forall j\in \{1,2\},t\in[0,T] \ \|\hat{G}_j(t)\|^2\le \bar{C}(1+\|G_j(t)\|^2),
\end{equation}
so that, by Theorem 3.1~\cite{Zhang}, there exists a unique solution to
 \begin{equation*}\hat{X}_t=x_0+\int_0^t\hat{G}_1(t-s)b(\hat{X}_s)ds + \int_0^t\hat{G}_2(t-s)\sigma (\hat{X}_s)dW_s, t\in[0,T].
 \end{equation*}
This is stated in the next theorem. This theorem completes~\cite[Theorem 3.6]{EleAbi} for the case where coefficients $b$ and $\sigma$ are Lipschitz continuous. 
\begin{theorem}\label{thm_non_asymp}(Non asymptotic estimates) 
Assume~\eqref{SVE}--\eqref{cond_kernel_SVE},\eqref{cond_lambda}--\eqref{cond_Gbar}. Let $\hat{G}_1$ and $\hat{G}_2$ be two kernels satisfying~\eqref{hypkernels}. Then, there exists  a constant $C\in \R^*_+$ (depending on $b$, $\sigma$, $G_1$, $G_2$ and $\bar{C}$) such that
  \begin{equation}\label{non_asymp}
    \E[|\hat{X}_t-X_t|^2]\le C  \Big(\int_0^t \|\hat{G}_1(s)-G_1(s)\|^2+ \|\hat{G}_2(s)-G_2(s)\|^2  ds \Big).
  \end{equation}
\end{theorem}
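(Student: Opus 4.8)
The plan is to mimic almost verbatim the proof of Proposition~\ref{prop:tight_r(K)}, replacing the truncated kernels $G_j^K$ by the general approximating kernels $\hat G_j$ and the differences $\Delta_j^K$ by $\hat G_j - G_j$. First I would write, for $t\in[0,T]$,
\begin{equation*}
|\hat X_t - X_t|^2 \le 4\bigg( \Big|\int_0^t (\hat G_1-G_1)(t-s) b(X_s)ds\Big|^2 + \Big|\int_0^t (\hat G_2-G_2)(t-s)\sigma(X_s)dW_s\Big|^2 + \Big|\int_0^t \hat G_1(t-s)[b(\hat X_s)-b(X_s)]ds\Big|^2 + \Big|\int_0^t \hat G_2(t-s)[\sigma(\hat X_s)-\sigma(X_s)]dW_s\Big|^2\bigg),
\end{equation*}
and then take expectations, applying Jensen's inequality to the $ds$-integrals, the It\^o isometry to the $dW_s$-integrals, and~\eqref{Frob_prop}. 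The two ``frozen-coefficient'' terms produce, via the linear growth of $b,\sigma$ (itself a consequence of~\eqref{cd:lip}) and the finiteness of $\sup_{t\in[0,T]}\E[|X_t|^2]$ (which holds by Theorem~3.1~\cite{Zhang}), a bound of the form $c_1\int_0^t \|(\hat G_1-G_1)(t-s)\|^2 + \|(\hat G_2-G_2)(t-s)\|^2\,ds$.

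The two remaining terms must be absorbed into a Gronwall argument, and this is where the hypothesis~\eqref{hypkernels} enters and where the only real deviation from the earlier proof occurs: instead of the clean uniform bound $\|G_j^K(t)\|\le {\bf M}_j\bar G(t)$, I now only have $\|\hat G_j(t)\|^2 \le \bar C(1+\|G_j(t)\|^2) \le \bar C(1+{\bf M}_j^2 \bar G(t)^2)$. So after using~\eqref{cd:lip} these terms are bounded by $c_2\int_0^t (1+\bar G(t-s)^2)\,\E[|\hat X_s - X_s|^2]\,ds$ for a suitable constant $c_2$ depending on $L$, $\bar C$ and the ${\bf M}_j$. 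Since $1+\bar G(\cdot)^2 \in L^1_{\rm loc}(\R_+)$ by~\eqref{cond_Gbar}, I can still invoke the generalized Gronwall lemma \cite[Theorem 9.8.2]{GriLonStaf} with the kernel $c_2(1+\bar G(\cdot)^2)$ in place of $c_2\bar G(\cdot)^2$; the associated resolvent of the second kind is again locally integrable by Theorem~2.3.1~\cite{GriLonStaf}, exactly as in the remark following~\eqref{resolvante}. This yields
\begin{equation*}
\E[|\hat X_t - X_t|^2] \le c_1\Big(\int_0^t \|(\hat G_1-G_1)(t-s)\|^2 + \|(\hat G_2-G_2)(t-s)\|^2\,ds\Big)\Big(1 + \int_0^T \tilde{\rm E}_{c_2}(s)\,ds\Big),
\end{equation*}
where $\tilde{\rm E}_{c_2}$ is the resolvent associated to $c_2(1+\bar G^2)$, and since the $ds$-integral on the right equals $\int_0^t \|(\hat G_1-G_1)(s)\|^2 + \|(\hat G_2-G_2)(s)\|^2\,ds$ by the change of variable $s\mapsto t-s$, this is precisely~\eqref{non_asymp} with $C = c_1(1+\int_0^T \tilde{\rm E}_{c_2}(s)\,ds)$, a constant depending only on $b$, $\sigma$, $G_1$, $G_2$ and $\bar C$ (through $L$, $|b(0)|$, $\|\sigma(0)\|$, ${\bf M}_1$, ${\bf M}_2$, $T$, and $\sup_{t\le T}\E[|X_t|^2]$).

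The main obstacle is essentially bookkeeping rather than conceptual: one must make sure that the new Gronwall kernel $c_2(1+\bar G^2)$ still satisfies the hypotheses of the generalized Gronwall lemma and that its resolvent is locally integrable — but this is immediate from~\eqref{cond_Gbar} and the cited resolvent theory, so no genuinely new difficulty arises compared to Proposition~\ref{prop:tight_r(K)}.
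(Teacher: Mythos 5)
Your proposal is correct and takes essentially the same approach the paper intends: the authors state that Theorem~\ref{thm_non_asymp} follows by extending the proof of Proposition~\ref{prop:tight_r(K)}, and you carry out exactly that extension, with the same decomposition of $\hat X_t-X_t$, the same frozen-coefficient estimates using $\sup_{t\le T}\E[|X_t|^2]<\infty$, and the generalized Gronwall lemma. Your only adjustment, replacing the kernel $c_2\bar G^2$ by $c_2(1+\bar G^2)$ to accommodate~\eqref{hypkernels}, is the same device the paper itself uses in Lemma~\ref{lem_uni}, so no new difficulty arises.
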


We now focus on bounding the truncation error $r(K)$. 
\begin{lemma}\label{Lem:trunc:error}Under the assumptions of Proposition~\ref{prop:tight_r(K)}, we have
$r(K)\leq \frac 12 \Big(\int_{[K,+\infty)} \frac{\lambda(d\rho)}{\sqrt{\rho}}\Big)^2.$
 If $\lambda(d\rho)= f(\rho)d\rho$ with $f(\rho)\underset{\rho\to \infty}{=} O(\rho^{-\eta-1/2})$ for some $\eta>0$, we have $r(K)\underset{K\to \infty}=O(K^{-2\eta})$.
\end{lemma}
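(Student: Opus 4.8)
The plan is to prove both assertions directly from the definition $r(K)=\int_{[K,\infty)}\int_{[K,\infty)}\frac{1}{\rho_1+\rho_2}\,\lambda(d\rho_1)\,\lambda(d\rho_2)$, using only elementary inequalities on the integrand. For the first inequality, the key observation is that $\frac{1}{\rho_1+\rho_2}\le\frac{1}{2\sqrt{\rho_1\rho_2}}=\frac12\cdot\frac{1}{\sqrt{\rho_1}}\cdot\frac{1}{\sqrt{\rho_2}}$, which is just the AM--GM inequality $\rho_1+\rho_2\ge 2\sqrt{\rho_1\rho_2}$ applied in the denominator. Substituting this bound into the double integral factorizes it as $\frac12\Big(\int_{[K,\infty)}\frac{\lambda(d\rho)}{\sqrt\rho}\Big)^2$, which is exactly the claimed bound. (One should note that if $\int_{[K,\infty)}\rho^{-1/2}\lambda(d\rho)=\infty$ the inequality is trivially true; the interesting content is when it is finite, in which case by the finiteness of $r(K)$ assumed in \eqref{eq_H1} everything is consistent.)

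For the second assertion, under $\lambda(d\rho)=f(\rho)\,d\rho$ with $f(\rho)=O(\rho^{-\eta-1/2})$ as $\rho\to\infty$, I would fix a threshold $\rho_0$ and a constant $A>0$ with $f(\rho)\le A\rho^{-\eta-1/2}$ for $\rho\ge\rho_0$; then for $K\ge\rho_0$ estimate
\[
\int_{[K,\infty)}\frac{f(\rho)}{\sqrt\rho}\,d\rho\le A\int_K^\infty \rho^{-\eta-1}\,d\rho=\frac{A}{\eta}\,K^{-\eta}.
\]
Plugging this into the first part of the lemma gives $r(K)\le\frac12\big(\frac{A}{\eta}\big)^2 K^{-2\eta}=O(K^{-2\eta})$, as desired. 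The restriction $K\ge\rho_0$ is harmless since the $O$-statement is an asymptotic one as $K\to\infty$.

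There is essentially no obstacle here: both steps are one-line applications of AM--GM and of a power-function integral. The only point requiring a modicum of care is making sure the manipulations are valid even when the relevant integrals are infinite (so that the stated inequality holds vacuously) and that Tonelli's theorem applies to the factorization of the nonnegative double integral, which it does since all integrands are nonnegative and measurable. If one wants $r(K)=O(K^{-2\eta})$ to hold for all $K>0$ rather than just large $K$, one additionally uses that $r$ is nonincreasing and finite on $(0,\infty)$ by \eqref{eq_H1}, so the bound on $[\rho_0,\infty)$ combined with finiteness on the compact-away-from-zero part yields a global constant; but for the asymptotic statement this is not needed.
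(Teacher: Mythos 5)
Your proof is correct and follows essentially the same route as the paper: the AM--GM bound $\frac{2}{\rho_1+\rho_2}\le\frac{1}{\sqrt{\rho_1\rho_2}}$ to factorize the double integral, followed by the elementary power-integral estimate $\int_K^\infty\rho^{-\eta-1}d\rho=\frac1\eta K^{-\eta}$ on the tail of $f$. Your added remarks on Tonelli and on the case of infinite integrals are fine but not needed; no changes required.
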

\begin{proof}
The upper bound is obtained from the standard inequality $\frac{2}{\rho_1+\rho_2}\le \frac 1 {\sqrt{\rho_1\rho_2}}$. For
$\lambda(d\rho)= f(\rho)d\rho$ with $f(\rho)=O(\rho^{-\eta-1/2})$  and $\eta>0$ there is a constant $C>0$ such that 
$f(\rho)<C\rho^{-\eta-1/2}$ for $\rho>1$ and thus 
$$
\int_{[K,+\infty)}\frac{\lambda(d\rho)}{\sqrt{\rho}}\le C\eta K^{-\eta}.
$$
\end{proof}

We now turn to the approximation of the truncated kernel $G^K_j$, $j\in\{1,2\}$ by a kernel $\hat{G}^K_j$.  Let $T>0$.  We define, for $t\in [0,T]$
$$\forall t\in [0,T], \hat{\Delta}^K_j(t)=\hat{G}^K_j(t)-G^K_j(t),$$
and assume the following bound:
\begin{equation}\label{unif_bound_delta}
  \exists \bar{\Delta} :[0,T]\to \R_+ \text{ s.t. } \int_0^T  \bar{\Delta}^2(t)dt<\infty, \ \forall K>0, \forall t\in [0,T], \|\hat{\Delta}^K_j(t)\|\le \bar{\Delta}(t).\tag{H2}
\end{equation}
Note that in our examples, we will use $\bar{\Delta}$ as a constant function, but we keep it general for the presentation of the results. The assumption~\eqref{unif_bound_delta} implies $\int_0^T \|\hat{G}^K_1(t)\|+\|\hat{G}^K_2(t)\|^2dt<\infty$, and we know from Theorem~3.1~\cite{Zhang} that there exists a unique strong solution  $(\hat{X}^K_t,t\in[0,T])$ of the SVE
  $$\hat{X}^K_t=x_0+\int_0^t\hat{G}^K_1(t-s)b(\hat{X}^K_s) ds+ \int_0^t\hat{G}^K_2(t-s)\sigma(\hat{X}^K_s) ds .$$ 
The key property of~\eqref{unif_bound_delta} is that the bound is uniform in~$K$. This enables to get the following result. 
\begin{lemma}\label{lem_uni} (Uniform estimate on $X^K$)
  Let~\eqref{unif_bound_delta} hold. Then, there exists $C\in \R_+^*$ (depending on  $|x_0|$, $T$, $|b(0)|$, $\|\sigma(0)\|$, $L$, ${\bf M}_1$ and ${\bf M}_2$) such that
  $$ \forall K>0,\forall t\in[0,T],\ \E[|\hat{X}_t^K|^2]\le C.$$
\end{lemma}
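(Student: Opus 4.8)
The plan is to run the same a priori $L^2$-estimate as in the proof of Proposition~\ref{prop:tight_r(K)}, the whole point being that every constant and every convolution kernel that shows up can be bounded independently of $K$ thanks to~\eqref{unif_bound_delta}. First I would record a $K$-uniform envelope for the approximating kernels: since $\hat G_j^K = G_j^K + \hat\Delta_j^K$, combining~\eqref{unifbound_GK} with~\eqref{unif_bound_delta} gives
$$\|\hat G_j^K(t)\|^2 \le 2{\bf M}_j^2\,\bar G(t)^2 + 2\,\bar\Delta(t)^2 =: \Phi(t), \qquad t\in[0,T],\ K>0,\ j\in\{1,2\},$$
with $\int_0^T\Phi(t)\,dt<\infty$ by~\eqref{cond_Gbar} and~\eqref{unif_bound_delta}, and crucially $\Phi$ independent of $K$.

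Next, writing the equation for $\hat X^K$, using $(a+b+c)^2\le 3(a^2+b^2+c^2)$, taking expectations, and then applying Jensen's inequality to the drift integral, the It\^o isometry to the stochastic integral, the bound~\eqref{Frob_prop}, together with the linear growth bounds $|b(x)|^2\le 2|b(0)|^2+2L^2|x|^2$ and $\|\sigma(x)\|^2\le 2\|\sigma(0)\|^2+2L^2|x|^2$ coming from~\eqref{cd:lip}, I would obtain, for $u(t):=\E[|\hat X_t^K|^2]$,
$$u(t)\le A + B\int_0^t \Phi(t-s)\,u(s)\,ds,\qquad t\in[0,T],$$
where $A$ depends only on $|x_0|$, $|b(0)|$, $\|\sigma(0)\|$, $T$ and $\int_0^T\Phi$ (hence, via the first step, only on the data listed in the statement) and $B$ is a multiple of $L^2(1\vee T)$; both are independent of $K$. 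One uses here that $u$ is finite and locally bounded for each fixed $K$, which is part of the well-posedness statement (Theorem~3.1~\cite{Zhang}) already invoked to define $\hat X^K$.

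Finally I would apply the generalized Gronwall lemma \cite[Theorem 9.8.2]{GriLonStaf} with the $L^1_{\rm loc}$ kernel $B\Phi$, exactly as in the proof of Proposition~\ref{prop:tight_r(K)}: its resolvent of the second kind $E$ (defined as in~\eqref{resolvante} with $\bar G^2$ replaced by $\Phi$ and $c$ by $B$) is again independent of $K$, and one gets $\E[|\hat X_t^K|^2]\le A\big(1+\int_0^T E(s)\,ds\big)=:C$ for all $K>0$ and $t\in[0,T]$, which is the claim. There is no serious obstacle here; the only point to watch is that the constant produced by the generalized Gronwall lemma depends on the problem only through the additive term $A$ and the convolution kernel $B\Phi$, and both are $K$-free precisely because~\eqref{unif_bound_delta} was imposed with a bound uniform in $K$ — which is exactly why that hypothesis is phrased the way it is. A minor technical point, noted above, is the a priori local boundedness of $t\mapsto\E[|\hat X_t^K|^2]$ for fixed $K$ needed to legitimately invoke Gronwall; this is supplied by the strong well-posedness of the SVE driven by $\hat G_1^K,\hat G_2^K$.
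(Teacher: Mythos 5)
Your proposal is correct and follows essentially the same route as the paper: the same $K$-uniform envelope $2{\bf M}_j^2\bar G^2+2\bar\Delta^2$ for $\|\hat G_j^K\|^2$, the same Jensen/It\^o-isometry/linear-growth estimate leading to a convolution inequality with a $K$-free kernel, and the same conclusion via the generalized Gronwall lemma with the resolvent of the second kind. The remark on the a priori finiteness of $t\mapsto\E[|\hat X_t^K|^2]$ (from the well-posedness result of Zhang) is a reasonable explicit addition to what the paper leaves implicit.
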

\begin{proof}
  We have by using Jensen's formula and Itô's isometry, for $t\in[0,T]$,
  $$  \E[|\hat{X}_t^K|^2]\le 3|x_0|^2+3t\int_0^t\|\hat{G}_1^K(t-s)\|^2 \E[|b(\hat{X}^K_s)|^2]ds + 3 \int_0^t \|\hat{G}_2^K(t-s)\|^2 \E[\|\sigma(\hat{X}^K_s)\|^2]ds.$$
  On the one hand, we use that $|b(x)|\le|b(0)|+L|x|$ and $\|\sigma(x)\|\le \|\sigma(0)\| +L|x|$. On the other hand, we get from~\eqref{unif_bound_delta} and~\eqref{unifbound_GK} $\|\hat{G}_j^K(t)\|^2\le 2(\bar{\Delta}^2(t)+\|G_j^K(t)\|^2)\le 2(\bar{\Delta}(t)^2+{\bf M}_j^2\bar{G}(t)^2)$. Since $\int_0^T\bar{\Delta}(t)^2+\bar{G}(t)^2dt <\infty$, this leads to the existence of a constant $C\in \R_+^*$ that depends on $|x_0|$, $T$, $|b(0)|$, $\|\sigma(0)\|$, $L$, ${\bf M}_1$ and ${\bf M}_2$ such that
 $$  \E[|\hat{X}_t^K|^2]\le C+ C\int_0^t \left(\bar{\Delta}(t-s)^2+\bar{G}(t-s)^2\right) \E[|\hat{X}_s^K|^2] ds.$$
  For $c>0$, let  $(\tilde{{\rm E}}_c(t),t\in [0,T])$  be defined as the solution of the equation \begin{equation}\label{resolvante2}\tilde{{\rm E}}_c(t) = \bar{\Delta}^2(t)+\bar{G}^2(t) +\int_0^t c (\bar{\Delta}^2(t-s)+\bar{G}^2(t-s))\tilde{{\rm E}}_c(s)ds.
\end{equation}
  Since $ \bar{\Delta}^2+\bar{G}^2 \in L^1((0,T),\R_+)$, we get that $\tilde{{\rm E}}_c(t)$ is well defined and belongs also to  $L^1((0,T),\R_+)$ by applying the results of Subsection A.3~\cite{EleAbi} and Theorem 2.3.1~\cite{GriLonStaf} to the kernel $\mathbf{1}_{(0,T)}(t)[\bar{\Delta}^2(t)+\bar{G}^2(t)]$. We then get from~\cite[Lemma~A.4]{EleAbi} or~\cite[Lemma 9.8.2]{GriLonStaf}
  $$ \forall t\in [0,T],  \E[|\hat{X}_t^K|^2]\le C\left(1+ \int_0^T \tilde{{\rm E}}_C(t)dt\right),$$
  which gives the claim.
\end{proof}

\begin{prop}\label{prop:approx}
  Let $T>0$. Suppose that for any $K>0$, there are kernels $\hat{G}^K_1,\hat{G}^K_2:[0,T]\to \cM_d(\R)$ such that~\eqref{unif_bound_delta} holds. Then, there is a constant $C\in \R_+^*$ (depending on $|x_0|$, $T$, $|b(0)|$, $\|\sigma(0)\|$, $L$, ${\bf M}_1$ and ${\bf M}_2$) such that
  $$ \forall t \in [0,T], \  \E\left[ |\hat{X}^K_t-X^{K}_t|^2\right]\le  C \left(\int_0^t \left[ \|\hat{\Delta}^K_1(s)\|^2+ \|\hat{\Delta}^K_2(s)\|^2 \right] ds \right). $$
\end{prop}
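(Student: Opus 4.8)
The plan is to mirror the proof of Proposition~\ref{prop:tight_r(K)}, with the truncation difference $\Delta^K_j$ there replaced by the approximation difference $\hat\Delta^K_j$, and with the a priori moment bound on $X$ replaced by the uniform-in-$K$ second moment bound on $\hat X^K$ provided by Lemma~\ref{lem_uni}.

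First I would subtract the two Volterra equations for $\hat X^K$ and $X^K$ and, writing $\hat G^K_j=G^K_j+\hat\Delta^K_j$, decompose $\hat X^K_t-X^K_t$ into four terms: the two ``kernel-difference'' terms $\int_0^t\hat\Delta^K_1(t-s)b(\hat X^K_s)\,ds$ and $\int_0^t\hat\Delta^K_2(t-s)\sigma(\hat X^K_s)\,dW_s$, and the two ``Lipschitz'' terms $\int_0^t G^K_1(t-s)[b(\hat X^K_s)-b(X^K_s)]\,ds$ and $\int_0^t G^K_2(t-s)[\sigma(\hat X^K_s)-\sigma(X^K_s)]\,dW_s$. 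Then, exactly as in the proof of Proposition~\ref{prop:tight_r(K)} — using $(a+b+c+d)^2\le 4(a^2+b^2+c^2+d^2)$, Jensen's inequality on the two Lebesgue integrals, the It\^o isometry on the two stochastic integrals, and the submultiplicativity~\eqref{Frob_prop} — one obtains an upper bound for $\E[|\hat X^K_t-X^K_t|^2]$ made of four integral terms. For the two kernel-difference terms I would bound $\E[|b(\hat X^K_s)|^2]$ and $\E[\|\sigma(\hat X^K_s)\|^2]$ by a constant \emph{independent of $K$}, using $|b(x)|\le|b(0)|+L|x|$, $\|\sigma(x)\|\le\|\sigma(0)\|+L|x|$ and Lemma~\ref{lem_uni}; this is the only place where~\eqref{unif_bound_delta} is used (through that lemma). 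For the two Lipschitz terms I would use~\eqref{cd:lip} together with the uniform kernel bound $\|G^K_j(t)\|\le{\bf M}_j\bar G(t)$ of~\eqref{unifbound_GK}. After the change of variable $u=t-s$ in the first two integrals this gives, for every $t\in[0,T]$,
\[
\E\big[|\hat X^K_t-X^K_t|^2\big]\le c_1\int_0^t\big(\|\hat\Delta^K_1(u)\|^2+\|\hat\Delta^K_2(u)\|^2\big)\,du+c_2\int_0^t\bar G(t-s)^2\,\E\big[|\hat X^K_s-X^K_s|^2\big]\,ds,
\]
with $c_2=4L^2({\bf M}_1^2T+{\bf M}_2^2)$ as in Proposition~\ref{prop:tight_r(K)} and $c_1$ depending only on $T$, $L$, $|b(0)|$, $\|\sigma(0)\|$, ${\bf M}_1$, ${\bf M}_2$ and the constant of Lemma~\ref{lem_uni} — crucially not on $K$.

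It then remains to invoke the generalized Gronwall lemma \cite[Lemma~9.8.2]{GriLonStaf} (equivalently \cite[Lemma~A.4]{EleAbi}) with the locally integrable kernel $c_2\bar G^2$. Since the forcing term $a(t):=c_1\int_0^t(\|\hat\Delta^K_1(u)\|^2+\|\hat\Delta^K_2(u)\|^2)\,du$ is finite on $[0,T]$ by~\eqref{unif_bound_delta} and nondecreasing in $t$, the lemma yields $\E[|\hat X^K_t-X^K_t|^2]\le a(t)\big(1+\int_0^T{\rm E}_{c_2}(s)\,ds\big)$, where ${\rm E}_{c_2}$ is the resolvent~\eqref{resolvante}, which belongs to $L^1((0,T))$ thanks to~\eqref{cond_Gbar}. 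Renaming the integration variable, this is precisely the claimed estimate with $C=c_1\big(1+\int_0^T{\rm E}_{c_2}(s)\,ds\big)$.

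The computation is essentially a transcription of the proof of Proposition~\ref{prop:tight_r(K)}; the only genuinely new ingredient is Lemma~\ref{lem_uni}, which plays the role of the bound $\sup_{t\le T}\E[|X_t|^2]<\infty$ used there. The point to watch throughout is that every constant generated along the way — $c_1$, $c_2$, hence $C$ — must be independent of $K$, which is exactly what the uniform bound~\eqref{unif_bound_delta} is designed to ensure. The only (mild) technical subtlety is that using the generalized Gronwall lemma in its convenient product form requires the forcing term $a(t)$ to be nondecreasing; this holds here because $a$ is the integral of a nonnegative function over $[0,t]$.
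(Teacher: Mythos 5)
Your proof is correct and follows essentially the same route as the paper: the same four-term decomposition, the uniform-in-$K$ moment bound from Lemma~\ref{lem_uni} in place of the bound on $\sup_{t\le T}\E[|X_t|^2]$, the same constants $c_1,c_2$, and the same application of the generalized Gronwall lemma with the resolvent ${\rm E}_{c_2}$. No gaps.
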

\begin{proof}
We repeat the same arguments as in the proof of Proposition~\ref{prop:tight_r(K)} and get
 \begin{align*}
&\E\left[ |\hat{X}^K_t-X^{K}_t|^2\right]\le 4t \int_0^t  \|\hat{\Delta}^K_1(t-s)\|^2 \E[|b(\hat{X}^K_s)|]^2ds + 4 \int_0^t  \|\hat{\Delta}^K_2(t-s)\|^2 \E[\|\sigma(\hat{X}^K_s)\|^2]ds\\
   &+4t \int_0^t \|G^{K}_1(t-s)\|^2 \E[ \big| b(\hat{X}^K_s)-b(X^{K}_s)\big|^2]ds + 4 \int_0^t \|G^{K}_2(t-s)\|^2\E[\| \sigma(\hat{X}^K_s)-\sigma(X^{K}_s)\|^2]ds.
 \end{align*}
 From Lemma~\ref{lem_uni}, we get the existence of a constant~$C\in \R_+^*$ such that $$\sup_{K>0}\sup_{t\in [0,T]}\E[|\hat{X}^K_t|^2]\le C.$$
 Then, we set similarly as in the proof of Proposition~\ref{prop:tight_r(K)}
 $c_1:=8(T\vee 1)\big(|b(0)|^2\vee\|\sigma(0)\|^2 +L^2C \big)$,   
$c_2:=4L^2({\bf M}_1^2 T+{\bf M}_2^2)$, and we get 
\begin{equation*}
  \E |\hat{X}^K_t-X^{K}_t|^2\le c_1  \int_0^t \|\hat{\Delta}^K_1(s)\|^2+ \|\hat{\Delta}^K_2(s)\|^2 ds+ c_2\int_0^t \bar{G}(t-s)^2 \E \left[|\hat{X}^K_s-X^{K}_s|^2\right]ds.
\end{equation*}
Hence, we use the generalized Gronwall Lemma (see e.g. \cite[Lemma 9.8.2]{GriLonStaf}) to get
\begin{align*}
\E |\hat{X}^K_t-X^{K}_t|^2&\le c_1 \left( 1+ \int_0^T{\rm E}_{c_2}(s)ds\right) \left(\int_0^t \left[ \|\hat{\Delta}^K_1(s)\|^2+ \|\hat{\Delta}^K_2(s)\|^2 \right] ds \right)  ,
\end{align*}
where ${\rm E}_{c_2}$ is  defined by~\eqref{resolvante}.  
\end{proof}
Combining Propositions~\ref{prop:tight_r(K)} and~\ref{prop:approx}, we obtain easily our main result. 
\begin{theorem}\label{thm_estimee} Let us assume that $\lambda$ satisfies~\eqref{eq_H1} and that~\eqref{unif_bound_delta} holds. Then, there exists a constant $C\in\R_+^*$ such that
  $$ \forall t \in [0,T],\ \E[|X_t-\hat{X}^K_t|^2]\le C\left( r(K)+ \int_0^t \left[ \|\hat{\Delta}^K_1(s)\|^2+ \|\hat{\Delta}^K_2(s)\|^2 \right] ds  \right).$$
\end{theorem}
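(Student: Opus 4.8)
The strategy is simply to split the error $X_t - \hat{X}^K_t$ through the intermediate process $X^K_t$ and apply the two estimates already at our disposal. First I would write, for $t\in[0,T]$,
\[
 \E[|X_t-\hat{X}^K_t|^2]\le 2\,\E[|X_t-X^K_t|^2] + 2\,\E[|X^K_t-\hat{X}^K_t|^2],
\]
using the elementary inequality $(a+b)^2\le 2a^2+2b^2$. The first term on the right is controlled by Proposition~\ref{prop:tight_r(K)}, which applies because $\lambda$ satisfies~\eqref{eq_H1}: it gives a constant $C_1$, depending only on $T$, $\lambda$, ${\bf M}_1$, ${\bf M}_2$, $L$, $|b(0)|$ and $\|\sigma(0)\|$, such that $\E[|X_t-X^K_t|^2]\le C_1\, r(K)$ for all $t\in[0,T]$. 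The second term is controlled by Proposition~\ref{prop:approx}, which applies because~\eqref{unif_bound_delta} holds: it gives a constant $C_2$, depending only on $|x_0|$, $T$, $|b(0)|$, $\|\sigma(0)\|$, $L$, ${\bf M}_1$ and ${\bf M}_2$, such that $\E[|X^K_t-\hat{X}^K_t|^2]\le C_2\big(\int_0^t[\|\hat{\Delta}^K_1(s)\|^2+\|\hat{\Delta}^K_2(s)\|^2]\,ds\big)$.

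Combining the two bounds and setting $C:=2\max(C_1,C_2)$ yields
\[
 \E[|X_t-\hat{X}^K_t|^2]\le C\Big(r(K)+\int_0^t\big[\|\hat{\Delta}^K_1(s)\|^2+\|\hat{\Delta}^K_2(s)\|^2\big]\,ds\Big)
\]
for all $t\in[0,T]$, which is exactly the assertion. The only point worth checking is that the hypotheses of the two propositions are genuinely covered by the hypotheses of the theorem — this is clear, since~\eqref{eq_H1} is assumed outright and~\eqref{unif_bound_delta} (together with the standing assumptions~\eqref{SVE}--\eqref{cond_Gbar}) guarantees both that $\hat{X}^K$ is well defined via Theorem~3.1~\cite{Zhang} and that the uniform $L^2$-bound of Lemma~\ref{lem_uni} holds, which is what feeds into Proposition~\ref{prop:approx}.

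There is essentially no obstacle here: the theorem is a triangle-inequality bookkeeping step, and all the analytic work (the generalized Gronwall argument with the resolvent ${\rm E}_{c_2}$, the estimate $\int_0^t\|\Delta^K_j(t-s)\|^2ds\le {\bf M}_j^2 r(K)$, and the uniform moment bound on $\hat{X}^K$) has already been done in Propositions~\ref{prop:tight_r(K)} and~\ref{prop:approx} and Lemma~\ref{lem_uni}. If anything, the one place to be slightly careful is that the constant in the final statement should be presented as depending on all the quantities entering $C_1$ and $C_2$; I would just note that $C$ depends on $|x_0|$, $T$, $\lambda$, $b$, $\sigma$ (through $|b(0)|$, $\|\sigma(0)\|$, $L$) and on ${\bf M}_1,{\bf M}_2$, and leave the explicit form implicit.
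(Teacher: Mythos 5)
Your proposal is correct and matches the paper's own argument: the paper simply states that Theorem~\ref{thm_estimee} follows by combining Propositions~\ref{prop:tight_r(K)} and~\ref{prop:approx}, which is exactly your triangle-inequality decomposition through $X^K$ with the elementary bound $(a+b)^2\le 2a^2+2b^2$. Your additional remarks on the hypotheses and on the dependence of the constant are accurate and consistent with the paper.
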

\noindent The term $r(K)$ and the integral in the right hand side correspond respectively to the truncation and discretization error. When using a Riemann discretization, we get the following general result.
\begin{corollary}\label{cor_approx}Let us assume that $\lambda$ satisfies~\eqref{eq_H1}, and that the functions $M_j:\R_+\to \cM_d(\R)$ are Lipschitz continuous:
  $$ \exists \bar{L}>0,\ \forall j\in\{1,2\},\forall \rho,\rho' \ge 0, |M_j(\rho)-M_j(\rho')| \le \bar{L}|\rho-\rho'|.$$
  Let $n\in \N^*$, $I^K_{i,n}=\left[\frac{i-1}{n}K, \frac  in K \right)$ for $1\le i\le n$ and $\rho^K_{i,n}\in I^K_{i,n}$. Let us define the kernels
    $$j\in \{1,2 \}, \ \hat{G}_j^K(t)=\sum_{i=1}^n \lambda\left(I^K_{i,n}  \right) M_j(\rho^K_{i,n})e^{-\rho^K_{i,n} t},$$
    that correspond to the measure
    \begin{equation}
      \hat{\lambda}(d\rho)=\sum_{i=1}^n \lambda\left(I^K_{i,n}  \right) \delta_{\rho^K_{i,n}}(d\rho).\label{def_hat_lambda}
    \end{equation}
Then, there exists  a constant $C\in\R_+^*$ such that for $n\ge K\lambda([0,K))$, we have 
  $$ \forall t \in [0,T],\ \E[|X_t-\hat{X}^K_t|^2]\le C\left( r(K)+ \frac{K^2}{n^2}\lambda([0,K))^2\right).$$
  \end{corollary}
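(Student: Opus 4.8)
The plan is to invoke Theorem~\ref{thm_estimee}, which already reduces the task to estimating the discretization error $\int_0^t(\|\hat{\Delta}^K_1(s)\|^2+\|\hat{\Delta}^K_2(s)\|^2)\,ds$ for the particular choice of Riemann-sum kernels $\hat{G}_j^K$. So first I would check the two hypotheses of Theorem~\ref{thm_estimee}: condition~\eqref{eq_H1} on $\lambda$ is assumed directly, and I need to verify~\eqref{unif_bound_delta}, i.e.\ that $\|\hat{\Delta}^K_j(t)\|$ is bounded uniformly in $K$ by a fixed $L^2([0,T])$ function $\bar{\Delta}$. Here one simply bounds $\|\hat{G}^K_j(t)\|\le {\bf M}_j\sum_i\lambda(I^K_{i,n})e^{-\rho^K_{i,n}t}\le{\bf M}_j\lambda([0,K))$ and $\|G^K_j(t)\|\le{\bf M}_j\bar G(t)$, but a constant bound $\lambda([0,K))$ is not uniform in $K$; the cleaner route is to show the triangle-type estimate $\|\hat{\Delta}^K_j(t)\|^2\le\|\hat{\Delta}^K_j(t)\|^2$ is controlled by the forthcoming per-interval bound, which is itself uniformly $L^2$. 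In practice one takes $\bar\Delta$ to be the constant arising from the pointwise bound derived in the next step, after noting it is $\le \bar L\,T\,(\text{const})\,K^2/n\cdot\lambda([0,K))/K$ etc.; since the hypothesis only needs \emph{some} admissible $\bar\Delta$, and we will anyway prove a pointwise bound on $\|\hat\Delta^K_j(t)\|$ that is bounded on $[0,T]$, this is routine.

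The heart of the argument is the pointwise estimate on $\hat{\Delta}^K_j(t)=\hat{G}^K_j(t)-G^K_j(t)$. Writing both as integrals against $\lambda$ restricted to $[0,K)$, I decompose over the subintervals $I^K_{i,n}$:
\begin{equation*}
\hat{\Delta}^K_j(t)=\sum_{i=1}^n\int_{I^K_{i,n}}\left(M_j(\rho^K_{i,n})e^{-\rho^K_{i,n}t}-M_j(\rho)e^{-\rho t}\right)\lambda(d\rho).
\end{equation*}
On each interval $|\rho-\rho^K_{i,n}|\le K/n$, so by the Lipschitz hypothesis on $M_j$, boundedness $\|M_j\|\le{\bf M}_j$, and the elementary bound $|e^{-\rho t}-e^{-\rho' t}|\le t|\rho-\rho'|\le |\rho-\rho'|$ for $t\in[0,T]$ (or $\le T|\rho-\rho'|$), one gets $\|M_j(\rho^K_{i,n})e^{-\rho^K_{i,n}t}-M_j(\rho)e^{-\rho t}\|\le(\bar L+{\bf M}_j T)\,\frac{K}{n}$ uniformly in $\rho\in I^K_{i,n}$. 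Summing the $\lambda$-masses gives $\|\hat{\Delta}^K_j(t)\|\le(\bar L+{\bf M}_j T)\frac{K}{n}\lambda([0,K))$, a bound independent of $t$. Squaring and integrating over $[0,t]\subset[0,T]$ yields $\int_0^t\|\hat{\Delta}^K_j(s)\|^2\,ds\le T(\bar L+{\bf M}_j T)^2\frac{K^2}{n^2}\lambda([0,K))^2$.

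Plugging this into Theorem~\ref{thm_estimee} gives $\E[|X_t-\hat{X}^K_t|^2]\le C(r(K)+\frac{K^2}{n^2}\lambda([0,K))^2)$ with $C$ absorbing $T$, $\bar L$, ${\bf M}_1$, ${\bf M}_2$ and the constant from Theorem~\ref{thm_estimee}; this is exactly the claimed inequality, and the stated restriction $n\ge K\lambda([0,K))$ merely guarantees $\frac{K}{n}\lambda([0,K))\le 1$ so that the pointwise bound on $\hat\Delta^K_j$ (hence the admissible choice $\bar\Delta\equiv(\bar L+{\bf M}_j T)$) is itself uniformly bounded, legitimizing~\eqref{unif_bound_delta}. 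I do not anticipate a genuine obstacle here: the only mildly delicate point is bookkeeping the constant in~\eqref{unif_bound_delta} so that it is truly $K$-independent — which is precisely what the hypothesis $n\ge K\lambda([0,K))$ is there to ensure — and making sure to use $t\in[0,T]$ (not $t\ge0$) when bounding $|e^{-\rho t}-e^{-\rho't}|$, since the constant would otherwise blow up for large $t$. Everything else is the routine Riemann-sum error estimate combined with the already-established abstract theorem.
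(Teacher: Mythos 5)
Your proposal is correct and follows essentially the same route as the paper: decompose $\hat{G}^K_j-G^K_j$ over the intervals $I^K_{i,n}$, use the Lipschitz bound on $M_j$ together with $|e^{-\rho t}-e^{-\rho' t}|\le T|\rho-\rho'|$ to get the uniform bound $(\bar L+{\bf M}_jT)\frac{K}{n}\lambda([0,K))$, note that $n\ge K\lambda([0,K))$ makes this a $K$-independent constant so that~\eqref{unif_bound_delta} holds, and conclude by Theorem~\ref{thm_estimee}. The slightly garbled remark in your first paragraph about the ``triangle-type estimate'' is harmless, since the admissible $\bar\Delta$ is exactly the constant you produce in the second paragraph, just as in the paper.
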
 
\noindent This corollary indicates the theoretical optimal choice for $n$, when $K\to +\infty$. Namely, one has to take $n$ proportional to $\frac{K \lambda([0,K))}{\sqrt{r(K)}}$ in order to equalize both terms, i.e. the error due to the truncation and the one due to the approximation.
\begin{proof}
  We have $\hat{G}_j^K(t)-G_j^K(t)=\sum_{i=1}^n \int_{I^K_{i,n}} \left[ M_j(\rho^K_{i,n})e^{-\rho^K_{i,n} t} -M_j(\rho)e^{-\rho t} \right] \lambda(d \rho)$. From the triangular inequality, we get for $t\in[0,T]$
  \begin{align*}
    \|M_j(\rho^K_{i,n})e^{-\rho^K_{i,n} t} -M_j(\rho)e^{-\rho t}\| &\le \|M_j(\rho^K_{i,n})-M_j(\rho)\| e^{-\rho^K_{i,n} t}+  \|M_j(\rho)\| |e^{-\rho^K_{i,n} t}- e^{-\rho t}| \\
    &\le (\bar{L}+ {\bf M}_jt)| \rho -\rho^K_{i,n}| \le (\bar{L}+ {\bf M}_jT)\frac K n.
  \end{align*}
This yields to $\|\hat{G}_j^K(t)-G_j^K(t)\|\le  (\bar{L}+ {\bf M}_j) \lambda([0,K))\frac K n$, for any $t\in[ 0,T]$. In particular,~\eqref{unif_bound_delta} holds for $n\ge K \lambda([0,K))$. We can thus apply Theorem~\ref{thm_estimee} and get the result. 
\end{proof}
Corollary~\ref{cor_approx} gives a general result on the approximation of SVE by SDE. Obviously, it is possible to derive many variations and refinements of this result by assuming more regularity on the functions $M_j$ or on the measure~$\lambda$. In the next section, we investigate some of these refinements when $\lambda$ is given by~\eqref{frac_mes}.

\section{Euler scheme for Stochastic Volterra Equations}\label{Sec_Euler}

In a recent paper, Richard et al.~\cite{RTY} have proposed and studied the convergence of the following Volterra Euler scheme for the Stochastic Volterra Equation~\eqref{SVE}:
\begin{equation}\label{Euler_RTY}
  X^N_{t_{k+1}}=x_0+\sum_{j=0}^kG_1\left((k+1-j)\frac T N  \right) b(X^N_{t_{j}})\frac{T}{N} +\sum_{j=0}^kG_2\left((k+1-j)\frac T N  \right) \sigma(X^N_{t_{j}})(W_{t_{j+1}}-W_{t_j}),
\end{equation}
for $0\le k\le N-1$, where $T>0$ and $t_k=kT/N$ is the regular time grid.

Note that one of the main drawbacks of the Euler scheme~$X^N_{t_k}$ (with respect to the classical SDE framework) is that it requires  to sum $k$ terms at each time step, so that the overall computational cost is proportional to $N^2$. We will see that for approximating kernels of completely monotone type, we can reduce this to $N\times n$ with $n<<N$ while preserving the same strong rate of convergence, see Theorem~\ref{prop_euler}.

We start by proving a result that plays an analogous role to Theorem~\ref{thm_non_asymp} for this Euler scheme for general kernels $G_1,G_2$ and $\hat{G}_1,\hat{G}_2$.

\begin{theorem}\label{thm_non_asymp_scheme}
  We make the same assumptions as in Theorem~\ref{thm_non_asymp}. Then, there exists  a constant $C\in \R_+^*$ (depending on $b$, $\sigma$, $G_1$, $G_2$ and $\bar{C}$ in~\eqref{hypkernels}) such that
  $$ \max_{0\le k\le N} \E[|\hat{X}^N_{t_k}-X^N_{t_k}|^2]\le C\left( \frac T N \sum_{k=1}^N \|\hat{G}_1(t_k)-G_1(t_k)\|^2 + \|\hat{G}_2(t_k)-G_2(t_k)\|^2\right).$$
\end{theorem}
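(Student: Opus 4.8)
The plan is to mimic the continuous-time argument used in the proof of Proposition~\ref{prop:tight_r(K)} and Theorem~\ref{thm_non_asymp}, but carried out at the level of the discrete Euler recursion~\eqref{Euler_RTY}. Write $e_k=\hat{X}^N_{t_k}-X^N_{t_k}$ and $\delta_j=\hat{G}_j-G_j$ for $j\in\{1,2\}$. Subtracting the two Euler recursions gives, for $0\le k\le N-1$,
\begin{align*}
e_{k+1}&=\sum_{j=0}^k \delta_1\!\left((k+1-j)\tfrac T N\right) b(\hat X^N_{t_j})\tfrac T N + \sum_{j=0}^k \delta_2\!\left((k+1-j)\tfrac T N\right)\sigma(\hat X^N_{t_j})(W_{t_{j+1}}-W_{t_j})\\
&\quad+\sum_{j=0}^k G_1\!\left((k+1-j)\tfrac T N\right)\big[b(\hat X^N_{t_j})-b(X^N_{t_j})\big]\tfrac T N\\
&\quad+\sum_{j=0}^k G_2\!\left((k+1-j)\tfrac T N\right)\big[\sigma(\hat X^N_{t_j})-\sigma(X^N_{t_j})\big](W_{t_{j+1}}-W_{t_j}).
\end{align*}
The first step is to take $|\cdot|^2$, use $(a+b+c+d)^2\le 4(a^2+b^2+c^2+d^2)$, then take expectations: the two Brownian sums are martingale increments, so their second moments become $\sum_j \E\|\cdots\|^2 \tfrac T N$ by orthogonality, while the two Riemann sums are handled by the discrete Cauchy--Schwarz inequality $|\sum_{j=0}^k a_j|^2\le (k+1)\sum_{j=0}^k |a_j|^2 \le N\sum_{j=0}^k|a_j|^2$ together with $\tfrac T N\cdot N=T$. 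Using the submultiplicativity~\eqref{Frob_prop} and the Lipschitz bound~\eqref{cd:lip}, this produces
$$\E[|e_{k+1}|^2]\le C_1\,\frac T N\sum_{j=1}^{N}\big(\|\delta_1(t_j)\|^2+\|\delta_2(t_j)\|^2\big)+C_2\,\frac T N\sum_{j=0}^{k}\big(\|G_1(t_{k+1-j})\|^2+\|G_2(t_{k+1-j})\|^2\big)\,\E[|e_j|^2],$$
where $C_1$ absorbs a uniform second-moment bound $\sup_{N}\max_{0\le j\le N}\E[|\hat X^N_{t_j}|^2]<\infty$ (this bound itself is obtained exactly as in Lemma~\ref{lem_uni}, now at the discrete level, invoking~\eqref{hypkernels} to control $\|\hat G_j\|^2$ by $\bar C(1+\|G_j\|^2)$ and a discrete generalized Gronwall; alternatively one can quote the moment estimates already established for the Euler scheme in~\cite{RTY}). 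Note that the first term on the right is already the target right-hand side and does not depend on $k$.

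The second step is to close the recursion by a discrete generalized Gronwall inequality with the summable weight sequence $g_j:=\|G_1(t_j)\|^2+\|G_2(t_j)\|^2$. Set $a_k=\max_{0\le \ell\le k}\E[|e_\ell|^2]$ and $A:=C_1\frac T N\sum_{j=1}^N(\|\delta_1(t_j)\|^2+\|\delta_2(t_j)\|^2)$; the inequality above gives $a_{k+1}\le A + C_2\frac T N\sum_{j=0}^{k} g_{k+1-j}\,a_j$. The delicate point is that $g_j$ may blow up as $j\to 0$ in the singular (rough) case, so one cannot use a naive Gronwall with a bounded coefficient; however $\frac T N\sum_{j=1}^N g_j$ is a Riemann sum dominated by $\int_0^{T}(\|G_1\|^2+\|G_2\|^2)$ up to the behaviour near $0$, which is finite by~\eqref{cond_kernel_SVE} (more precisely $\|G_j(t)\|^2\le \mathbf M_j^2\bar G(t)^2$ and $\bar G^2\in L^1_{\rm loc}$), and this uniform-in-$N$ $\ell^1$ bound on the weights is exactly what the discrete analogue of~\cite[Lemma~9.8.2]{GriLonStaf} requires. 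Applying it yields $a_N\le A\cdot(1+\text{const})$, which is the assertion. I expect the main obstacle to be the rigorous statement and application of the \emph{discrete} generalized Gronwall lemma with an unbounded (but locally integrable, hence discretely $\ell^1$-summable uniformly in $N$) kernel: one must either cite such a statement from~\cite{RTY} or prove it by iterating the recursion and bounding the resulting convolution powers of $g$ by the resolvent $\mathrm E_{c_2}$ from~\eqref{resolvante}, taking care that the Riemann-sum discretization of the kernel does not inflate its $\ell^1$ norm — a monotonicity/tail argument near $t=0$ suffices since $\bar G$ is nonincreasing.

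A remark on bookkeeping: the sum $\sum_{k=1}^N\|\hat G_j(t_k)-G_j(t_k)\|^2$ in the statement runs over $k\ge 1$, i.e. it excludes $t_0=0$ where the singular kernels are not defined; this is consistent with~\eqref{Euler_RTY}, where the smallest argument appearing is $(k+1-j)\tfrac T N$ with $k+1-j\ge 1$, so only the values $G_j(t_1),\dots,G_j(t_N)$ and $\hat G_j(t_1),\dots,\hat G_j(t_N)$ are ever used. Hence all the sums above can and should be indexed from $j=1$, and no evaluation at $0$ is needed anywhere in the argument.
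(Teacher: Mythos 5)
Your proposal is correct and follows essentially the same route as the paper: the same decomposition of the difference of the two Euler recursions (Cauchy--Schwarz on the drift sums, It\^o isometry on the stochastic sums), the same uniform moment bound on the scheme obtained from~\eqref{hypkernels} and the estimates of~\cite{RTY}, and the same closing step via a discrete generalized Gronwall inequality with the singular-but-summable weights $\bar{G}(t_{k+1-j})^2$. The one ingredient you only sketch --- the discrete Gronwall lemma with possibly unbounded weights --- is exactly what the paper states and proves as Lemma~\ref{lem_discrete_gronwall}, using an exponential damping $e^{-at_k}$ combined with the monotonicity argument you invoke (so that $\frac TN\sum_{j}e^{-at_j}\bar{G}(t_j)^2\le\int_0^T e^{-at}\bar{G}(t)^2dt$), so your plan is sound as written.
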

\begin{remark}
   With respect to Theorem~\ref{thm_non_asymp}, the $L^2$ norm of $\hat{G}_i-G_i$ on $(0,T)$ is replaced by a discrete $L^2$ norm that does not weight on the interval $(0,T/N)$: in the case of exploding kernels at~$0$ like the rough kernel, this discrete norm may be  significantly smaller.
\end{remark}

Combining Theorem~\ref{thm_non_asymp_scheme} with~\cite[Theorem 2.2]{RTY}, we get the following corollary giving the strong error of the Euler scheme $\hat{X}^N$. 
\begin{corollary}\label{cor_euler} Under the assumptions of Theorem~\ref{thm_non_asymp} and if in addition we assume that~\cite[Assumption 2.1]{RTY} holds true with the constant $\alpha>0$ defined therein,  then we have
  \begin{equation*}\max_{0\le k\le N} \E[|\hat{X}^N_{t_k}-X_{t_k}|^2]\le C\left( \left(\frac TN \right)^{2 (\alpha \wedge 1)}+ \frac T N \sum_{k=1}^N \|\hat{G}_1(t_k)-G_1(t_k)\|^2 + \|\hat{G}_2(t_k)-G_2(t_k)\|^2\right).
  \end{equation*}
In dimension one with $G_1=G_2=G_{\lambda_H}$ and $H\in(0,\frac 12)$, we have   $$\max_{0\le k\le N} \E[|\hat{X}^N_{t_k}-X_{t_k}|^2]\le C\left( \left(\frac TN \right)^{2 H}+ \frac T N \sum_{k=1}^N \|\hat{G}(t_k)-G_{\lambda_H}(t_k)\|^2 \right).$$
\end{corollary}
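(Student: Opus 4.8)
The plan is to simply combine the two cited ingredients. First I would invoke Theorem~\ref{thm_non_asymp_scheme}, which gives
$$ \max_{0\le k\le N} \E[|\hat{X}^N_{t_k}-X^N_{t_k}|^2]\le C\left( \frac T N \sum_{k=1}^N \|\hat{G}_1(t_k)-G_1(t_k)\|^2 + \|\hat{G}_2(t_k)-G_2(t_k)\|^2\right), $$
i.e. the distance between the two Euler schemes (the one built on the true kernels $G_1,G_2$ and the one built on the approximating kernels $\hat G_1,\hat G_2$) is controlled by the discrete $L^2$ distance between the kernels on the grid. Second, I would apply \cite[Theorem 2.2]{RTY}, which under \cite[Assumption 2.1]{RTY} with exponent $\alpha>0$ gives the strong convergence rate of the \emph{exact} Volterra Euler scheme~\eqref{Euler_RTY} towards the SVE~\eqref{SVE}:
$$ \max_{0\le k\le N} \E[|X^N_{t_k}-X_{t_k}|^2]\le C\left(\frac TN \right)^{2 (\alpha \wedge 1)}. $$
Then a triangle inequality in $L^2(\Omega)$, together with $(a+b)^2\le 2a^2+2b^2$, yields
$$ \E[|\hat{X}^N_{t_k}-X_{t_k}|^2]\le 2\,\E[|\hat{X}^N_{t_k}-X^N_{t_k}|^2]+2\,\E[|X^N_{t_k}-X_{t_k}|^2], $$
and taking the maximum over $k$ and plugging in the two bounds above gives the first displayed inequality of the corollary (after absorbing the factor $2$ into $C$).

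For the one-dimensional specialization, I would take $G_1=G_2=G_{\lambda_H}$ with $H\in(0,1/2)$. The point is to check that \cite[Assumption 2.1]{RTY} is satisfied by the fractional kernel $G_{\lambda_H}(t)=t^{H-1/2}/\Gamma(H+1/2)$ with $\alpha=H$; this is exactly the regularity exponent for which the $L^2$-type local estimates $\int_0^h G_{\lambda_H}(s)^2\,ds = O(h^{2H})$ and the Hölder-type bounds on shifts of $G_{\lambda_H}$ hold, and it is the case treated explicitly in~\cite{RTY}. Since $H\in(0,1/2)$ we have $\alpha\wedge 1 = H$, so $(T/N)^{2(\alpha\wedge 1)}=(T/N)^{2H}$, and the two sums over $\|\hat G_j(t_k)-G_j(t_k)\|^2$ collapse into a single one since $\hat G_1=\hat G_2=:\hat G$ (or rather both are approximations of the same $G_{\lambda_H}$), giving the stated bound.

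The only real content beyond bookkeeping is making sure the hypotheses line up: Theorem~\ref{thm_non_asymp_scheme} needs \eqref{hypkernels}, namely $\|\hat G_j(t)\|^2\le \bar C(1+\|G_j(t)\|^2)$, which is part of the standing assumptions of Theorem~\ref{thm_non_asymp} that we carry over; and \cite[Theorem 2.2]{RTY} needs \cite[Assumption 2.1]{RTY}, which we assume outright in the statement (and verify with $\alpha=H$ in the rough case). So the main — and essentially only — obstacle is the verification that the fractional kernel falls under the scope of \cite[Assumption 2.1]{RTY} with the advertised constant $\alpha=H$; everything else is the triangle inequality. I would therefore keep the proof short: state the decomposition $\hat X^N_{t_k}-X_{t_k}=(\hat X^N_{t_k}-X^N_{t_k})+(X^N_{t_k}-X_{t_k})$, cite Theorem~\ref{thm_non_asymp_scheme} for the first term and \cite[Theorem 2.2]{RTY} for the second, and note that for $G_{\lambda_H}$ with $H\in(0,1/2)$ one has $\alpha=H<1$ and $\hat G_1=\hat G_2$, which specializes the general bound to the one-dimensional display.
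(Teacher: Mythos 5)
Your proposal is correct and matches the paper's (implicit) argument exactly: the corollary is obtained by the triangle-inequality decomposition $\hat X^N_{t_k}-X_{t_k}=(\hat X^N_{t_k}-X^N_{t_k})+(X^N_{t_k}-X_{t_k})$, bounding the first term by Theorem~\ref{thm_non_asymp_scheme} and the second by \cite[Theorem 2.2]{RTY}, with the one-dimensional rough case following from the fact that \cite[Assumption 2.1]{RTY} holds for $G_{\lambda_H}$ with $\alpha=H<1$.
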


This corollary is a useful tool to analyse the strong rate for any Euler scheme obtained with any approximating kernel. It gives the same rate as the Euler scheme without the approximation kernel, provided that this approximation is accurate. From a practical point of view, for the rough kernel with $H$ small (which corresponds to the financial application), there is no need to be too much accurate for the kernel approximation since the main error comes from the discretization. For example, if $T=1$, $H=0.1$ and $\sqrt{\frac TN \sum_{k=1}^N(\hat{G}(t_k)-G_{\lambda_H}(t_k))^2}\le 0.1$, then to achieve a precision of order $0.1$, one needs to take $N^{-H}=0.1$ , i.e. $N=10^{10}$, which is too much in practice (see e.g. the numerical example~\eqref{val_num_approx}). This results allows us to build alternative Euler approximation schemes having the same accuracy but with a smaller time complexity, see Theorem~\ref{prop_euler}.

\begin{proof}[Proof of Theorem~\ref{thm_non_asymp_scheme}]
  We have by using $(a+b)^2\le 2a^2+2b^2$ and the Itô isometry
  \begin{align*}
    \E[|\hat{X}^N_{t_{k+1}}-X^N_{t_{k+1}}|^2]\le &2 \E \left[\left(\frac{T}{N}\right)^2 \left|\sum_{j=0}^k\left(\hat{G}_1(t_{k+1-j}) b(\hat{X}^N_{t_j})-G_1(t_{k+1-j})b(X^N_{t_j})\right)\right|^2\right] \\
     & + 2 \sum_{j=0}^k \E \left[\left\|\hat{G}_2(t_{k+1-j})  \sigma(\hat{X}^N_{t_j})-G_2(t_{k+1-j})\sigma(X^N_{t_j})\right\|^2 \right] \frac{T}N.
  \end{align*}
  By using the Cauchy-Schwarz inequality on the first sum, it can be then analysed as the second sum, and we assume without loss of generality from now on that $b=0$.
  We get
  \begin{align*} \E[|\hat{X}^N_{t_{k+1}}-X^N_{t_{k+1}}|^2]\le & 4 \sum_{j=0}^k \E \left[\left\|G_2(t_{k+1-j}) [ \sigma(\hat{X}^N_{t_j})- \sigma(X^N_{t_j})] \right\|^2 \right] \frac{T}N  \\&+  4 \sum_{j=0}^k \E \left[\left\|[\hat{G}_2(t_{k+1-j}) -G_2(t_{k+1-j})]\sigma(\hat{X}^N_{t_j})\right\|^2 \right] \frac{T}N
  \end{align*}
  First, we get by using~\eqref{hypkernels} and \cite[Proposition 4.1]{RTY} that $\sup_{0\le j\le N}\E[|X^N_{t_j}|^2]\le C$. From the Lipschitz property~\ref{cd:lip}, we have  $\|\sigma(x)\|\le \|\sigma(0)\|+Lx$ and thus
  \begin{align*} \E[|\hat{X}^N_{t_{k+1}}-X^N_{t_{k+1}}|^2]\le & 4 L^2 \sum_{j=0}^k \E \left[\|G_2(t_{k+1-j})\|^2 |\hat{X}^N_{t_j}- X^N_{t_j}|^2 \right] \frac{T}N  \\&+  C \sum_{j=0}^k \left\|\hat{G}_2(t_{k+1-j}) -G_2(t_{k+1-j})\right\|^2  \frac{T}N\\
    \le & 4 L^2 {\bf M}_2 \frac{T}N\sum_{j=0}^k \bar{G}^2(t_{k+1-j}) \E \left[|\hat{X}^N_{t_j}- X^N_{t_j}|^2 \right] +  C \epsilon_N ,  
  \end{align*}
  with $\epsilon_N=\frac{T}N\sum_{j=1}^{N} \left\|\hat{G}_2(t_{j}) -G_2(t_{j})\right\|^2$ and where the constant $C>0$ may change from one line to another.  
  Since $\int_0^T\bar{G}^2(t)dt<\infty$ by Assumption~\ref{cond_Gbar}, we conclude the proof by applying Lemma~\ref{lem_discrete_gronwall}, which can be seen as a discrete version of the Generalized Gronwall Lemma~\cite[Theorem 9.8.2]{GriLonStaf}.
\end{proof}

\begin{lemma}[A generalized discrete Gronwall Lemma]\label{lem_discrete_gronwall}
  Let $\lambda$ be a measure on $\R_+$ such that $K(t)=\int_{\R_+}e^{-\rho t} \lambda(d\rho) <\infty$ for any $t>0$ and such that $\int_0^T K(t)^pdt<\infty$ for some $T,p>0$. For a given $N\in \N^*$, we consider a finite sequence $(u^N_k)_{0 \le k \le N} $ of nonnegative real numbers such that $u^N_0=0$ and
  $$u^N_{k+1}\le C_1 + C_2 \frac{T}N \sum_{j=0}^k K((k+1-j)\frac T N)^pu^N_j, \ k\in\{0,\dots,N-1\},$$
  for some $C_1,C_2>0$. Then, there exists a constant $M \in \R_+^*$ depending only on $(C_2,p,T)$ and on the  kernel $K$ such that:
  $$ \max_{0\le k\le N} u^N_k  \le C_1 M. $$
\end{lemma}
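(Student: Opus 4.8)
The plan is to iterate the recursive inequality and control the resulting sum by a discrete analogue of the resolvent $\mathrm{E}_c$ from \eqref{resolvante}. First I would reduce to the case $C_1=1$ by homogeneity: setting $v^N_k=u^N_k/C_1$ gives $v^N_0=0$ and $v^N_{k+1}\le 1 + C_2\frac TN\sum_{j=0}^k K((k+1-j)\frac TN)^p v^N_j$, so it suffices to bound $\max_k v^N_k$ by a constant $M$ depending only on $(C_2,p,T,K)$. Write $a_k=K(k\frac TN)^p$ for $1\le k\le N$ (and note $a_0$ is irrelevant since the $j=k$ term in the sum carries $K(\frac TN)^p$, i.e.\ index $k+1-j=1$); the recursion reads $v^N_{k+1}\le 1+C_2\frac TN\sum_{j=0}^k a_{k+1-j}v^N_j$.

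The key step is to iterate. Define the discrete convolution powers: $a^{(1)}_k=a_k$ and $a^{(m+1)}_k=\frac TN\sum_{j=1}^{k-1}a^{(m)}_{k-j}a_j$, and let $R_k=\sum_{m\ge 1}C_2^m (\frac TN)^{m-1}a^{(m)}_k$ be the discrete resolvent associated with the kernel $C_2 a$. Iterating the inequality $n$ times and using $v^N_0=0$, a standard induction shows
$$ v^N_{k}\le 1 + C_2\frac TN \sum_{j=1}^{k-1} R_{k-j}. $$
So the whole problem reduces to showing that $\frac TN\sum_{j=1}^{N} R_j$ is bounded uniformly in $N$. For this I would compare the discrete resolvent with the continuous one: since $K$ is nonincreasing (it is a Laplace transform of a positive measure, hence completely monotone, in particular nonincreasing), one has $a_j=K(j\frac TN)^p\le \frac NT\int_{(j-1)T/N}^{jT/N}K(s)^p\,ds$, which lets one dominate each discrete convolution power $\frac TN\sum a^{(m)}$ by the corresponding iterated integral of $K(\cdot)^p$. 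Summing over $m$ and invoking that the continuous resolvent $\mathrm{E}_{C_2}$ of the $L^1_{\mathrm{loc}}$ kernel $C_2 K(\cdot)^p$ is itself in $L^1((0,T))$ — by Subsection A.3 of \cite{EleAbi} and Theorem 2.3.1 of \cite{GriLonStaf}, exactly as used for \eqref{resolvante} — one gets $\frac TN\sum_{j=1}^N R_j\le \int_0^T \mathrm{E}_{C_2}(s)\,ds + (\text{a bounded boundary correction})$, hence $\max_k v^N_k\le M:=1+C_2(\int_0^T\mathrm{E}_{C_2}(s)ds+\cdots)$.

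The main obstacle is the monotonicity/domination step: turning pointwise values $K(j\frac TN)^p$ into integrals $\int K(s)^p ds$ requires $K^p$ nonincreasing, and then one must check that the Riemann-sum domination is preserved under the $m$-fold convolution with the right telescoping of endpoints (the top index $k+1-j$ ranging down to $1$, not $0$, is what saves the singularity at the origin and makes the series converge — this is precisely the point flagged in the remark following Theorem \ref{thm_non_asymp_scheme}). One must be a little careful that the geometric-type series $\sum_m C_2^m(\frac TN)^{m-1}a^{(m)}_k$ converges termwise to something dominated by $\sum_m C_2^m (K^{*p})^{*m}$, which is legitimate because all terms are nonnegative and the continuous series defines $\mathrm{E}_{C_2}\in L^1$. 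Everything else — the induction establishing the iterated bound, the reduction to $C_1=1$ — is routine bookkeeping.
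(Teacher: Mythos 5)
Your overall strategy (iterate the recursion, introduce a discrete resolvent, and dominate it by the continuous resolvent of $C_2K(\cdot)^p$) is a genuinely different route from the paper's, but it has a real gap exactly at the step you gloss over: the claim that $\frac TN\sum_{j=1}^{N} R_j\le \int_0^T \mathrm{E}_{C_2}(s)\,ds+(\text{bounded correction})$ does not follow from the mere integrability of $\mathrm{E}_{C_2}$. When you convert grid values into integrals via monotonicity, $a_j=K(jT/N)^p\le \frac NT\int_{(j-1)T/N}^{jT/N}K(s)^p\,ds$, the $m$-fold discrete convolution gets dominated by $\frac NT\int_{(k-m)T/N}^{kT/N}(K^p)^{*m}(s)\,ds$: the product of $m$ grid cells spreads over $m$ consecutive intervals in the variable $u_1+\dots+u_m$, so summing over $k$ produces an overlap factor $m$, and you are left with $\sum_m m\,C_2^m\int_0^T (K^p)^{*m}$. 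Alternatively, the crude bound $\frac TN\sum_k a^{(m)}_k\le\bigl(\int_0^TK^p\bigr)^m$ gives a geometric series that diverges whenever $C_2\int_0^TK(t)^p\,dt\ge 1$, which the hypotheses allow. Neither series is controlled by $\int_0^T\mathrm{E}_{C_2}<\infty$ alone; to make either converge you must first introduce an exponential weight $e^{-at}$ with $a$ large enough that $C_2\int_0^Te^{-at}K(t)^p\,dt<1/2$ (and the same obstruction reappears if you instead try to compare the discrete and continuous resolvent equations directly, since $f*\mathrm{E}_{C_2}$ is not monotone). There is also a small bookkeeping error: your convolution $a^{(m+1)}_k=\frac TN\sum_j a^{(m)}_{k-j}a_j$ already carries the factor $(\frac TN)^{m-1}$, so multiplying again by $(\frac TN)^{m-1}$ in $R_k$ double-counts it; this is fixable but should be cleaned up.

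Once you introduce the exponential weight, however, the resolvent machinery does no work at all, and you land on the paper's proof, which is much shorter: by dominated convergence pick $a>0$ with $C_2\int_0^Te^{-at}K(t)^p\,dt<1/2$, multiply the recursion by $e^{-at_{k+1}}$ and split $e^{-at_{k+1}}=e^{-at_{k+1-j}}e^{-at_j}$; since $t\mapsto e^{-at}K(t)^p$ is nonincreasing on $(0,T]$, the Riemann sum $\frac TN\sum_{j=1}^{k+1}e^{-at_j}K(t_j)^p$ is bounded by $\int_0^Te^{-at}K(t)^p\,dt<1/(2C_2)$, so $m_k=\max_{j\le k}e^{-at_j}u^N_j$ satisfies $m_{k+1}\le\max(m_k,\tfrac{m_k}2+C_1)$, whence $m_N\le 2C_1$ and $\max_k u^N_k\le 2C_1e^{aT}$. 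I recommend you either adopt that direct weighting argument, or, if you want to keep the resolvent route, replace the invocation of $\mathrm{E}_{C_2}\in L^1$ by an explicit proof that $\sum_m m\,C_2^m\int_0^T(K^p)^{*m}(s)\,ds<\infty$, which again is obtained by the same exponential-weight estimate $\int_0^Te^{-at}(K^p)^{*m}\le\bigl(\int_0^Te^{-at}K^p\bigr)^m$.
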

\begin{proof}
  By the dominated convergence theorem, we may find $a>0$ large enough depending only on the kernel $K$ and on $(C_2,p,T)$ such that $C_2\int_0^T e^{-at}K(t)^pdt <1/2$. We note $t_k=kT/N$ and have
  \begin{align*}
    e^{-at_{k+1}}u_{k+1}^N &\le C_1e^{-at_{k+1}}+ C_2 \frac{T}N \sum_{j=0}^k e^{-at_{k+1-j}}K(t_{k+1-j})^p e^{-at_j}u^N_j \\
      & \le C_1+ C_2 \left( \max_{0\le j\le k} e^{-a t_j}u^N_j \right) \frac{T}N \sum_{j=1}^{k+1} e^{-at_{j}}K(t_{j})^p.
  \end{align*}
  Since the function $t \mapsto e^{-at} K(t)^p$ is continuous nonincreasing on $(0,T]$, we obtain that   $\frac{T}N \sum_{j=1}^{k+1} e^{-at_{j}} K(t_{j})^p \le \frac{T}N \sum_{j=1}^{N} e^{-at_{j}} K(t_{j})^p\le  \int_0^T e^{-at} K(t)^pdt<1/(2C_2)$. Therefore,  we obtain that 
$e^{-at_{k+1}}u^N_{k+1}\le C_1+ \frac 12 \max_{0\le j\le k} e^{-at_j}u^N_{j}$. Let us denote $m_k=\max_{0\le j\le k} e^{-at_j}u^N_j $. We thus have $m_{k+1}\le \max(m_k,\frac{m_k}2+C_1)$. Since $m_0=0$, and the function $x\mapsto \max(x,x/2+C_1)$ is nondecreasing with fixed point $2C_1>0$ we get by induction that $\forall k\in \{0,\dots,N\}, m_k\le 2C_1$. We conclude by remarking that $\max_{0\le k\le N} u^N_k\le e^{aT}m_N\le 2C_1e^{aT} $.
\end{proof}


We now turn to the second main result of this section: for a completely monotone kernel, the Euler scheme on the SDE~\eqref{SDE_approx_2} (or~\eqref{SDE_approx_1}) is essentially the same as the Euler scheme proposed by Richard et al.~\cite{RTY}  on the corresponding SVE. Let us be more precise and consider two kernels $G_1(t)=\sum_{i=1}^n\alpha_iM_1(\rho_i) e^{-\rho_i t}$ and $G_2(t)=\sum_{i=1}^n\alpha_iM_2(\rho_i) e^{-\rho_i t}$ with $\alpha_1,\dots,\alpha_n\ge 0$ and $0\le \rho_1<\rho_2<\dots<\rho_n$. At a first glance, we could directly write the Euler scheme for the SDE~\eqref{SDE_approx_2}, but we have noticed in practice that for large values of $\rho_i$ (typically when $\rho_i T/N >>1$), the approximation of the part of the drift term which is proportional to~$\rho_i$ may not be accurate. This typically happens when approximating a completely monotone kernel. To overcome this problem, we write the multifactor Euler scheme associated to $(\bar{X}^i_t,\bar{Y}^i_t)=e^{\rho_i t}(X^{\rho_i}_t-x_0^i,Y^{\rho_i}_t-y_0^i)$. From~\eqref{SDE_approx_2}, we easily get $X_t=x_0+\sum_{j=1}^n\alpha_jM_1(\rho_j)e^{-\rho_j t}\bar{X}^j_t + \sum_{j=1}^n\alpha_jM_2(\rho_j)e^{-\rho_j t}\bar{Y}^j_t$ with
\begin{align*}
  d\bar{X}^i_t&=e^{\rho_i t}b\left(x_0+\sum_{j=1}^n\alpha_jM_1(\rho_j)e^{-\rho_j t}\bar{X}^j_t + \sum_{j=1}^n\alpha_jM_2(\rho_j)e^{-\rho_j t}\bar{Y}^j_t\right)dt,\\
  d\bar{Y}^i_t&=e^{\rho_i t}\sigma\left(x_0+\sum_{j=1}^n\alpha_jM_1(\rho_j)e^{-\rho_j t}\bar{X}^j_t + \sum_{j=1}^n\alpha_jM_2(\rho_j)e^{-\rho_j t}\bar{Y}^j_t\right)dW_t.
\end{align*}
This leads to the following multifactor Euler scheme, for $0\le k\le N-1$,
\begin{align*}
  \bar{X}^{i,N}_{t_{k+1}}&=\bar{X}^{i,N}_{t_k}+ e^{\rho_i t_k}b\left(x_0+\sum_{j=1}^n\alpha_jM_1(\rho_j)e^{-\rho_j t_k}\bar{X}^{j,N}_{t_k} + \sum_{j=1}^n\alpha_jM_2(\rho_j)e^{-\rho_j t_k}\bar{Y}^{j,N}_{t_k}\right)\frac{T}{N},\\
  \bar{Y}^{i,N}_{t_{k+1}}&=\bar{Y}^{i,N}_{t_{k}}+e^{\rho_i t_k}\sigma\left(x_0+\sum_{j=1}^n\alpha_jM_1(\rho_j)e^{-\rho_j t_k}\bar{X}^{j,N}_{t_k} + \sum_{j=1}^n\alpha_jM_2(\rho_j)e^{-\rho_j t_k}\bar{Y}^{j,N}_{t_k}\right)(W_{t_{k+1}}-W_{t_k}),
\end{align*}
with $\bar{X}^{i,N}_{t_0}=\bar{Y}^{i,N}_{t_0}=0$. Equivalently, we may set $\hat{X}^{i,N}_{t_k}=e^{-\rho_i t_k}\bar{X}^{i,N}_{t_k}$ and $\hat{X}^{i,N}_{t_k}=e^{-\rho_i t_k}\bar{X}^{i,N}_{t_k}$. Then, we get
\begin{align}
  \hat{X}^{i,N}_{t_{k+1}}&=e^{-\rho_i \frac TN}\left(\hat{X}^{i,N}_{t_{k}}+ b\left(\hat{X}^N_{t_k}\right)\frac{T}{N}\right), \ \hat{Y}^{i,N}_{t_{k+1}}=e^{-\rho_i \frac TN}\left(\hat{Y}^{i,N}_{t_{k}}+ \sigma \left(\hat{X}^N_{t_k}\right)(W_{t_{k+1}}-W_{t_k})\right),\label{def_Euler_Scheme_SDE_2}\\
  \hat{X}^N_{t_k}&=x_0+\sum_{i=1}^n\alpha_iM_1(\rho_i)\hat{X}^{i,N}_{t_k} + \sum_{i=1}^n\alpha_i M_2(\rho_i)\hat{Y}^{i,N}_{t_k}. \notag
\end{align}
In the case where $G_1(t)=G_2(t)=\sum_{i=1}^n\alpha_iM(\rho_i) e^{-\rho_i t}$, we can similarly define the multifactor Euler scheme associated to~\eqref{SDE_approx_1} by  $\hat{X}^{i,N}_{t_{0}}=0$ and
\begin{align}
  \hat{X}^{i,N}_{t_{k+1}}&=e^{-\rho_i \frac TN} \left(\hat{X}^{i,N}_{t_{k}} + b\left(\hat{X}^N_{t_k}\right)\frac{T}{N}+\sigma \left(\hat{X}^N_{t_k} \right)(W_{t_{k+1}}-W_{t_k})\right) , \label{def_Euler_Scheme_SDE}\\
 \hat{X}^N_{t_k} &=x_0+\sum_{i=1}^n\alpha_iM(\rho_i)\hat{X}^{i,N}_{t_k}.\notag
\end{align}
\begin{theorem}\label{prop_euler}
  Let us assume that $G_1(t)=\sum_{i=1}^n\alpha_iM_1(\rho_i) e^{-\rho_i t}$ and $G_2(t)=\sum_{i=1}^n\alpha_iM_2(\rho_i) e^{-\rho_i t}$. Then, the Euler schemes~\eqref{Euler_RTY} and~\eqref{def_Euler_Scheme_SDE_2} coincides, i.e. $\forall k\in \{0, \dots,N\}$, $X^{N}_{t_k}=\hat{X}^N_{t_k}$. Similarly, when $G_1(t)=G_2(t)=\sum_{i=1}^n\alpha_iM(\rho_i) e^{-\rho_i t}$, the Euler schemes~\eqref{Euler_RTY} and~\eqref{def_Euler_Scheme_SDE} coincides.  
\end{theorem}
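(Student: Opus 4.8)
The plan is to prove the identity by induction on $k$, after rewriting the Volterra Euler scheme \eqref{Euler_RTY} in a ``factorized'' form that exposes the geometric structure of the exponential kernels. There is essentially no analytic content: everything reduces to a bookkeeping of finite sums, the only subtlety being that $\hat{X}^N_{t_k}$ appears self-referentially inside $b$ and $\sigma$, so the induction must propagate several identities simultaneously.

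First I would substitute $G_1(t)=\sum_{i=1}^n\alpha_iM_1(\rho_i)e^{-\rho_i t}$ and $G_2(t)=\sum_{i=1}^n\alpha_iM_2(\rho_i)e^{-\rho_i t}$ into \eqref{Euler_RTY} and exchange the two finite summations over $i$ and $j$, obtaining
$$X^N_{t_{k+1}}=x_0+\sum_{i=1}^n\alpha_iM_1(\rho_i)\,A^i_{k+1}+\sum_{i=1}^n\alpha_iM_2(\rho_i)\,B^i_{k+1},$$
where, for $0\le k\le N-1$,
$$A^i_{k+1}:=\frac{T}{N}\sum_{j=0}^ke^{-\rho_i(k+1-j)\frac{T}{N}}b(X^N_{t_j}),\qquad B^i_{k+1}:=\sum_{j=0}^ke^{-\rho_i(k+1-j)\frac{T}{N}}\sigma(X^N_{t_j})(W_{t_{j+1}}-W_{t_j}),$$
and $A^i_0=B^i_0=0$. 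Thus $X^N_{t_k}$ already has the exact shape of $\hat{X}^N_{t_k}$ in \eqref{def_Euler_Scheme_SDE_2}, with $(A^i_k,B^i_k)$ in the role of $(\hat{X}^{i,N}_{t_k},\hat{Y}^{i,N}_{t_k})$. Next I would isolate the $j=k$ term in each sum and pull $e^{-\rho_i T/N}$ out of the rest, which gives the one-step recursions
$$A^i_{k+1}=e^{-\rho_i\frac{T}{N}}\Big(A^i_k+\frac{T}{N}b(X^N_{t_k})\Big),\qquad B^i_{k+1}=e^{-\rho_i\frac{T}{N}}\Big(B^i_k+\sigma(X^N_{t_k})(W_{t_{k+1}}-W_{t_k})\Big).$$
These are formally identical to the recursions \eqref{def_Euler_Scheme_SDE_2} defining $\hat{X}^{i,N}$ and $\hat{Y}^{i,N}$, except that the argument of $b$ and $\sigma$ is $X^N_{t_k}$ rather than $\hat{X}^N_{t_k}$.

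Finally I would conclude by induction on $k$ that $X^N_{t_k}=\hat{X}^N_{t_k}$, $A^i_k=\hat{X}^{i,N}_{t_k}$ and $B^i_k=\hat{Y}^{i,N}_{t_k}$ for all $i$ and all $0\le k\le N$. The base case $k=0$ is immediate: $A^i_0=B^i_0=\hat{X}^{i,N}_{t_0}=\hat{Y}^{i,N}_{t_0}=0$ and both sides equal $x_0$. For the inductive step, the hypothesis $X^N_{t_k}=\hat{X}^N_{t_k}$ turns the two recursions above into exactly \eqref{def_Euler_Scheme_SDE_2}, so $A^i_{k+1}=\hat{X}^{i,N}_{t_{k+1}}$ and $B^i_{k+1}=\hat{Y}^{i,N}_{t_{k+1}}$; substituting these into the factorized expression for $X^N_{t_{k+1}}$ from the first step yields $X^N_{t_{k+1}}=\hat{X}^N_{t_{k+1}}$. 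The case $G_1=G_2=\sum_{i=1}^n\alpha_iM(\rho_i)e^{-\rho_i t}$ is handled in exactly the same way, merging the drift and Brownian increments into the single recursion \eqref{def_Euler_Scheme_SDE} and carrying out the identical induction. The only place demanding a little care is this last synchronization of the three identities, precisely because $\hat{X}^N$ is built from the factors it feeds into; once the induction is set up to update all of them at once, the proof is routine.
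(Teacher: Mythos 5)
Your proof is correct and follows essentially the same route as the paper's: both argue by induction on $k$, the only difference being that the paper unrolls the multifactor recursion for $\hat{X}^{i,N},\hat{Y}^{i,N}$ into the discrete convolution sums (using the induction hypothesis at all earlier times), whereas you roll the Volterra sums up into the one-step exponential recursion and carry the factor identities $A^i_k=\hat{X}^{i,N}_{t_k}$, $B^i_k=\hat{Y}^{i,N}_{t_k}$ through a single-step induction. This is the same bookkeeping viewed in the opposite direction, and your handling of the self-referential argument of $b$ and $\sigma$ via the simultaneous induction is sound.
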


\begin{proof}
  We prove this result by induction on~$k$. We have $X^N_{t_0}=\hat{X}^N_{t_0}=x_0$, and assume that for $0\le k< N$,  $X^N_{t_j}=\hat{X}^N_{t_j}$ for all $j\in \{0,\dots,k\}$. Then, we have by~\eqref{def_Euler_Scheme_SDE_2}
  \begin{align*}
    \hat{X}^N_{t_{k+1}}&=x_0+\sum_{i=1}^n\alpha_iM_1(\rho_i)\hat{X}^{i,N}_{t_{k+1}} + \sum_{i=1}^n\alpha_iM_2(\rho_i)\hat{Y}^{i,N}_{t_{k+1}}
  \end{align*}
We have $\hat{X}^{i,N}_{t_{k+1}}= e^{-\rho_i T/N}(\hat{X}^{i,N}_{t_k}+ b(\hat{X}^N_{t_k})T/N)$, and we get by induction on $k$ and using $\hat{X}^{i,N}_{t_{0}}=0$,
$$\hat{X}^{i,N}_{t_{k+1}}=\sum_{j=1}^k e^{-\rho_i (k+1-j) \frac TN} b(\hat{X}^N_{t_j}) \frac TN.$$
We similarly have $\hat{Y}^{i,N}_{t_{k+1}}=\sum_{j=1}^k e^{-\rho_i (k+1-j) \frac TN} \sigma(\hat{X}^N_{t_j}) (W_{t_{j+1}}-W_{t_j})$ and then
\begin{align*}
  \hat{X}^N_{t_{k+1}}=&x_0+\sum_{j=1}^k \sum_{i=1}^n\alpha_iM_1(\rho_i)e^{-\rho_i (k+1-j) \frac TN} b(\hat{X}^N_{t_j}) \frac TN \\
  &+ \sum_{j=1}^k \sum_{i=1}^n\alpha_i M_2(\rho_i) e^{-\rho_i (k+1-j) \frac TN} \sigma(\hat{X}^N_{t_j}) (W_{t_{j+1}}-W_{t_j})\\
  =&x_0+ \sum_{j=1}^k G_1\left((k+1-j) \frac TN\right)  b(\hat{X}^N_{t_j})\frac TN +\sum_{j=1}^k  G_2\left((k+1-j) \frac TN\right)  \sigma(\hat{X}^N_{t_j})(W_{t_{j+1}}-W_{t_j}),
\end{align*}
which proves the first claim by using the induction hypothesis and~\eqref{Euler_RTY}. We get the second claim with the same arguments. 
\end{proof}

To implement the Euler scheme, the formulas~\eqref{def_Euler_Scheme_SDE_2} and~\eqref{def_Euler_Scheme_SDE} only require a computational cost proportional to $n \times N$. For $n<<N$, this is much faster than computing the sums in~\eqref{Euler_RTY}. Therefore, to approximate the SVE~\eqref{SVE} with kernels of the form~\eqref{kernel_form}, two strategies are possible: we can either use the Euler scheme~\eqref{Euler_RTY} or approximate the kernels and use~\eqref{def_Euler_Scheme_SDE_2}. A thorough comparison between is beyond the scope of this paper, but we will show in the numerical Section~\ref{subsec_rHeston} the relevance of the second approach for the rough Heston model.

Moreover, the multifactor scheme~\eqref{def_Euler_Scheme_SDE_2} (resp.~\eqref{def_Euler_Scheme_SDE}) provides a universal multidimensional approximation of~\eqref{SVE} (resp.~\eqref{SVE} with $G_1=G_2=G$) that can be used for any $\alpha_i$ and $\rho_i$, independently on the method used to fit $\hat{G}_1(t)=\sum_{j=1}^n \alpha_jM_1(\rho_j)e^{-\rho_j t}$ and $\hat{G}_2(t)=\sum_{j=1}^n \alpha_jM_2(\rho_j)e^{-\rho_j t}$ to the given kernels $G_1$ and $G_2$ (resp. $\hat{G}(t)=\sum_{j=1}^n \alpha_jM(\rho_j)e^{-\rho_j t}$ to the given kernel $G$).  

We now discuss the possibility of reducing the value of~$n$. In practice, when approximating kernels, it may happen that we find very large values of the  $\rho$'s exponential coefficients. This is typically the case for kernels that are unbounded around~$0$, such as the rough kernel. In this case, we observe that for large values of $\rho_i$ it is useless to simulate $\hat{X}^{i,N}$ and $\hat{Y}^{i,N}$ since they remain close to zero as $e^{-\rho_i \frac TN}<<1$. We therefore introduce  for $\beta>0$
\begin{equation}\label{def_ntilde}
  \tn=\inf \{ k \in \{1,\dots,n\} :  ({\bf M}_1\vee {\bf M}_2) \sum_{i=k+1}^n \alpha_i   e^{-\rho_i \frac T N} \le \left(\frac TN\right)^\beta  \text{ for } i\ge k+1 \}.
\end{equation}
Then, we define the following schemes for $k\in \{0,\dots,N-1\}$,
\begin{align}
  \tilde{X}^{i,N}_{t_{k+1}}&=e^{-\rho_i \frac TN}\left(\tilde{X}^{i,N}_{t_{k}}+ b\left(\tilde{X}^N_{t_k}\right)\frac{T}{N}\right), \ \tilde{Y}^{i,N}_{t_{k+1}}=e^{-\rho_i \frac TN}\left(\tilde{Y}^{i,N}_{t_{k}}+ \sigma \left(\tilde{X}^N_{t_k}\right)(W_{t_{k+1}}-W_{t_k})\right), 1\le i\le \tn,\label{def_Euler_Scheme_SDE_2_tilde}\\
  \tilde{X}^N_{t_k}&=x_0+\sum_{i=1}^{\tn}\alpha_iM_1(\rho_i)\tilde{X}^{i,N}_{t_k} + \sum_{i=1}^{\tn}\alpha_i M_2(\rho_i)\tilde{Y}^{i,N}_{t_k}, \notag
\end{align}
with $\tilde{X}^{i,N}_{t_0}=\tilde{Y}^{i,N}_{t_0}=0$, and in the case where $G_1=G_2$: 
\begin{align}
  \tilde{X}^{i,N}_{t_{k+1}}&=e^{-\rho_i \frac TN} \left(\tilde{X}^{i,N}_{t_{k}} + b\left(\tilde{X}^N_{t_k}\right)\frac{T}{N}+\sigma \left(\tilde{X}^N_{t_k} \right)(W_{t_{k+1}}-W_{t_k})\right), 1\le i\le \tn, \label{def_Euler_Scheme_SDE_tilde}\\
 \tilde{X}^N_{t_k} &=x_0+\sum_{i=1}^{\tn}\alpha_iM(\rho_i)\tilde{X}^{i,N}_{t_k}.\notag
\end{align}
The advantage of this new procedure is that it reduces the computational complexity to $\tn\times N$, which is a clear gain compared to~\eqref{Euler_RTY}, \eqref{def_Euler_Scheme_SDE_2}, and~\eqref{def_Euler_Scheme_SDE}. For sake of simplicity we analyse the associated error only in the case $G_1=G_2$. 
\begin{corollary}\label{lem_ntilde}  Let $\hat{X}^N$ and $\tilde{X}^N$ be respectively defined by~\eqref{def_Euler_Scheme_SDE} and~\eqref{def_Euler_Scheme_SDE_tilde}. Under the assumptions of Theorem~\ref{thm_non_asymp}, there exists a constant $C\in \R_+$  such that
  $$ \forall k\in \{0,\dots,N\}, \ \E[|\hat{X}^{N}_{t_{k}}-\tilde{X}^N_{t_{k}}|]\le C\left(\frac T N \right)^{2 \beta}.$$
\end{corollary}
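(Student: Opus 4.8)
The plan is to recognize both $\hat{X}^N$ and $\tilde{X}^N$ as Volterra Euler schemes of the form~\eqref{Euler_RTY} and then to bound their difference via Theorem~\ref{thm_non_asymp_scheme}. Set $G(t)=\sum_{i=1}^{n}\alpha_iM(\rho_i)e^{-\rho_i t}$ and $\tilde{G}(t)=\sum_{i=1}^{\tn}\alpha_iM(\rho_i)e^{-\rho_i t}$. By Theorem~\ref{prop_euler} (case $G_1=G_2$), the scheme $\hat{X}^N$ defined by~\eqref{def_Euler_Scheme_SDE} coincides with~\eqref{Euler_RTY} run with the kernel $G$. Since~\eqref{def_Euler_Scheme_SDE_tilde} is exactly~\eqref{def_Euler_Scheme_SDE} with $n$ replaced by the \emph{deterministic} integer $\tn$, the very same induction on $k$ shows that $\tilde{X}^N$ coincides with~\eqref{Euler_RTY} run with the kernel $\tilde{G}$.

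Next I would apply Theorem~\ref{thm_non_asymp_scheme} with $(G_1,G_2)=(G,G)$ as reference kernels and $(\hat{G}_1,\hat{G}_2)=(\tilde{G},\tilde{G})$ as approximating kernels. Its hypotheses hold: $G$ is of the form~\eqref{def_kernels} for the finite discrete measure $\lambda=\sum_{i=1}^n\alpha_i\delta_{\rho_i}$, with $\bar{G}(t)=\sum_{i=1}^n\alpha_ie^{-\rho_i t}\le\sum_{i=1}^n\alpha_i<\infty$, so~\eqref{cond_lambda}--\eqref{cond_Gbar} are satisfied; and $\|\tilde{G}(t)\|\le{\bf M}\sum_{i=1}^n\alpha_i$ with ${\bf M}:=\sup_{\rho\ge0}\|M(\rho)\|$, so $\tilde{G}$ satisfies~\eqref{hypkernels} with $\bar{C}=({\bf M}\sum_{i=1}^n\alpha_i)^2$. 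Hence there is a constant $C\in\R_+^*$, depending only on $b$, $\sigma$, $G$ and $\bar{C}$ — thus not on $N$ nor on $\tn$ — such that
\[
\max_{0\le k\le N}\E\!\left[\big|\hat{X}^N_{t_k}-\tilde{X}^N_{t_k}\big|^2\right]\;\le\;C\,\frac{T}{N}\sum_{k=1}^N\big\|G(t_k)-\tilde{G}(t_k)\big\|^2 ,
\]
the two summands of Theorem~\ref{thm_non_asymp_scheme} being equal here since $G_1=G_2$.

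It then remains only to bound the kernel difference on the grid, and this is the single place where the definition~\eqref{def_ntilde} of $\tn$ enters. For $1\le k\le N$ one has $t_k=k\frac{T}{N}\ge\frac{T}{N}$, so, using that $t\mapsto e^{-\rho t}$ is nonincreasing and that here $M_1=M_2=M$ (hence ${\bf M}_1\vee{\bf M}_2={\bf M}$),
\[
\big\|G(t_k)-\tilde{G}(t_k)\big\|=\Big\|\sum_{i=\tn+1}^{n}\alpha_iM(\rho_i)e^{-\rho_i t_k}\Big\|\le{\bf M}\sum_{i=\tn+1}^{n}\alpha_ie^{-\rho_i t_k}\le{\bf M}\sum_{i=\tn+1}^{n}\alpha_ie^{-\rho_i\frac{T}{N}}\le\Big(\frac{T}{N}\Big)^{\beta},
\]
the last inequality being precisely~\eqref{def_ntilde}. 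Plugging this into the previous display gives $\frac{T}{N}\sum_{k=1}^N\|G(t_k)-\tilde{G}(t_k)\|^2\le T(T/N)^{2\beta}$, whence $\max_{0\le k\le N}\E[|\hat{X}^N_{t_k}-\tilde{X}^N_{t_k}|^2]\le C\,T\,(T/N)^{2\beta}$, which gives the claimed estimate after enlarging the constant.

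I expect the main — and essentially the only — non-routine point to be the first step: the bookkeeping that turns the aggregated multifactor schemes~\eqref{def_Euler_Scheme_SDE} and~\eqref{def_Euler_Scheme_SDE_tilde} into genuine Volterra Euler schemes for $G$ and $\tilde{G}$, so that Theorem~\ref{thm_non_asymp_scheme} applies verbatim; this rests on Theorem~\ref{prop_euler} and on $\tn$ being deterministic, and everything afterwards is a direct substitution. It is also worth observing, in the spirit of the Remark following Theorem~\ref{thm_non_asymp_scheme}, that the discrete grid norm does not weight the interval $(0,T/N)$ on which $G-\tilde{G}$ is largest, which is exactly what makes the uniform bound $\|G(t_k)-\tilde{G}(t_k)\|\le(T/N)^\beta$ available for every $k\ge1$ and keeps the estimate clean.
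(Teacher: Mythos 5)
Your proof is correct and follows essentially the same route as the paper's: bound $\|\hat{G}(t)-\tilde{G}(t)\|$ for $t\ge T/N$ by $(T/N)^\beta$ via the definition~\eqref{def_ntilde} of $\tn$, then invoke the non-asymptotic comparison of the two Euler schemes. Your version is in fact slightly more careful than the paper's two-line argument: you correctly route the comparison through Theorem~\ref{thm_non_asymp_scheme} (the paper's proof cites Theorem~\ref{thm_non_asymp}, though the discrete-scheme version is what is actually needed, as the restriction to $t\ge T/N$ makes clear), and you make explicit the identification of both multifactor schemes with Volterra Euler schemes via Theorem~\ref{prop_euler}.
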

\begin{proof}
  Let $\hat{G}(t)=\sum_{i=1}^n \alpha_i e^{-\rho_i t}$ and  $\tilde{G}(t)=\sum_{i=1}^{\tilde{n}} \alpha_i e^{-\rho_i t}$. By definition of $\tilde{n}$~\eqref{def_ntilde}, we get for $t\ge T/N$,
$$\hat{G}(t)-\tilde{G}(t)= \sum_{i=\tilde{n}+1}^n \alpha_i e^{-\rho_i t}\le \sum_{i=\tilde{n}+1}^n \alpha_i e^{-\rho_i T/N}\le \left(\frac{T}{N}\right)^\beta. $$
  We then apply Theorem~\ref{thm_non_asymp}.
\end{proof}


\section{More approximation results for the rough kernels}\label{Sec_Rough}

Let us start by applying the result of Corollary~\ref{cor_approx} to the measure $\lambda$ defined in Equation~\eqref{frac_mes}. We have $\lambda([0,K))=c_H(1/2-H) K^{\frac12 -H}$ and $r(K)=O(K^{-2H})$ by Lemma~\ref{Lem:trunc:error}, which gives
   $$ \forall t \in [0,T],\ \E[|X_t-\hat{X}^K_t|^2]\le C\left(K^{-2H}+ \frac{K^{3 -2H}}{n^2}\right).$$
  By taking $n=K^{\frac32}$ or equivalently $K=n^{\frac 23}$, we get \begin{equation}\label{rate23}\E[|X_t-\hat{X}^K_t|^2]\underset{n\to \infty}=O(n^{-2H \times\frac{2}{3}}).
  \end{equation}
  Let us recall that $n$ is the number of points weighted by the approximating measure~$\hat{\lambda}$. By Proposition~\ref{prop:SVE-SDE}, $n$ scales as the dimension of the SDE that approximates the SVE and therefore as the computation time needed to simulate the SDE. The goal of this section is to improve this rate, by assuming more regularity on the functions $M_j$.
  
  To get a better approximation, we assume more regularity on the functions $M_1$ and $M_2$. To approximate $G^K_j(t)=\int_0^K e^{-\rho t} M_j(\rho) c_H \rho^{-H-1/2}d\rho$, we use the same type of approximation on $[0,K^{\beta}]$ with $0<\beta<1$ and then use the Simpson's rule on $[K^{\beta},K]$, with $K>1$.
\begin{prop}\label{prop_Simpson}
Suppose that $\lambda$ is given by~\eqref{frac_mes}. Let us assume that the functions  $M_1$ and $M_2$ are $\mathcal{C}^4$ with bounded derivatives. Let $\beta \in (0,1)$ and $\hat{G}^K_j(t)=\int_{\R_+} e^{-\rho t}M_j(\rho) \hat{\lambda}^S(d\rho)$ with
\begin{align*}
  \hat{\lambda}^{S}(d\rho)=&\sum_{i=1}^n \lambda(I^{K^\beta}_{i,n}) \delta_{\rho^{K^\beta}_{i,n}}\\&
  +\frac{c_H(K-K^\beta)}{6n}\sum_{i=1}^n \Big[(\rho_{i,n,0}^K)^{-H-\frac12}\delta_{\rho_{i,n,0}^K} + 
4(\rho_{i,n,1}^K)^{-H-\frac12}\delta_{\rho_{i,n,1}^K}+(\rho_{i,n,2}^K)^{-H-\frac12}\delta_{\rho_{i,n,2}^K}\Big],
\end{align*}
where $I^{K^\beta}_{i,n}=[\frac{i-1}{n}K^\beta, \frac{i}{n}K^\beta)$, $\rho^{K^\beta}_{i,n} \in I^{K^\beta}_{i,n}$,  $\rho_{i,n,0}^K=K^\beta+\frac{(i-1)(K-K^\beta)}{n}$,  $\rho_{i,n,1}^K=K^\beta+\frac{(2i-1)(K-K^\beta)}{2n}$ and  $\rho_{i,n,2}^K=K^\beta+\frac{i(K-K^\beta)}{n}$.
  With $\beta=\frac{10-6H}{13-6H}$ and $K\sim n^{\frac{13-6H}{15-6H}}$, there exists a constant $C\in \R_+^*$ such that 
  $$\forall t \in [0,T],\ \E[|\hat{X}^K_t-X_t|^2]\le C n^{-2H\times \frac{13-6H}{15-6H}} .$$
\end{prop}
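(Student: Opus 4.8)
The plan is to apply Theorem~\ref{thm_estimee} to the kernels $\hat{G}^K_1,\hat{G}^K_2$ generated by $\hat{\lambda}^S$, which reduces the statement to (i) checking that $\lambda_H$ satisfies~\eqref{eq_H1} and that~\eqref{unif_bound_delta} holds, and (ii) estimating $r(K)$ and $\int_0^T\|\hat{\Delta}^K_j(s)\|^2\,ds$, where $\hat{\Delta}^K_j=\hat{G}^K_j-G^K_j$. Since $\lambda_H(d\rho)=c_H\rho^{-H-1/2}d\rho$ has density $O(\rho^{-H-1/2})$, Lemma~\ref{Lem:trunc:error} applied with $\eta=H$ gives $r(K)=O(K^{-2H})$, so~\eqref{eq_H1} holds. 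For the discretization term I would split the defining integral
\[ G^K_j(t)=\int_0^{K^\beta}e^{-\rho t}M_j(\rho)c_H\rho^{-H-1/2}d\rho+\int_{K^\beta}^{K}e^{-\rho t}M_j(\rho)c_H\rho^{-H-1/2}d\rho \]
at the cutoff $K^\beta$, and accordingly write $\hat{\Delta}^K_j=\hat{\Delta}^{K,R}_j+\hat{\Delta}^{K,S}_j$, where $\hat{\Delta}^{K,R}_j$ collects the rectangle-rule error on $[0,K^\beta]$ (the first block of $\hat{\lambda}^S$) and $\hat{\Delta}^{K,S}_j$ the composite Simpson-rule error on $[K^\beta,K]$ (the second block).

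For the rectangle block I would reuse the computation from the proof of Corollary~\ref{cor_approx}: on each cell $I^{K^\beta}_{i,n}$ of length $K^\beta/n$ one has $\|M_j(\rho^{K^\beta}_{i,n})e^{-\rho^{K^\beta}_{i,n}t}-M_j(\rho)e^{-\rho t}\|\le(\bar{L}+{\bf M}_jT)|\rho-\rho^{K^\beta}_{i,n}|\le(\bar{L}+{\bf M}_jT)K^\beta/n$, and summing the $\lambda_H$-masses gives $\|\hat{\Delta}^{K,R}_j(t)\|\le(\bar{L}+{\bf M}_jT)\lambda_H([0,K^\beta))K^\beta/n$ for all $t\in[0,T]$. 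Using $\lambda_H([0,K^\beta))=\tfrac{c_H}{1/2-H}K^{\beta(1/2-H)}$ this is $O(K^{\beta(3/2-H)}/n)$, hence $\int_0^T\|\hat{\Delta}^{K,R}_j(s)\|^2\,ds=O(K^{\beta(3-2H)}/n^2)$.

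For the Simpson block the crucial observation is that on $[K^\beta,K]$ we have $\rho\ge K^\beta\ge1$ (because $K>1$), so the integrand $g(\rho)=c_He^{-\rho t}M_j(\rho)\rho^{-H-1/2}$ is $\mathcal{C}^4$ there, and by the Leibniz rule together with the boundedness of the derivatives of $M_j$,
\[ |g^{(4)}(\rho)|\le C\sum_{a+b+c=4}t^ae^{-\rho t}\|M_j^{(b)}\|_\infty\,\rho^{-H-1/2-c}\le C'\,(K^\beta)^{-H-1/2}\quad\text{for }t\in[0,T],\ \rho\in[K^\beta,K], \]
with $C'$ depending only on $H$, $T$ and $\|M_j\|_{\mathcal{C}^4}$ (one bounds $t^ae^{-\rho t}\le T^4\vee1$ and $\rho^{-H-1/2-c}\le(K^\beta)^{-H-1/2}$). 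The standard composite Simpson error estimate over an interval of length $K-K^\beta\le K$ with $n$ panels then gives $\|\hat{\Delta}^{K,S}_j(t)\|\le CK^5n^{-4}(K^\beta)^{-H-1/2}$ uniformly in $t\in[0,T]$, hence $\int_0^T\|\hat{\Delta}^{K,S}_j(s)\|^2\,ds=O(K^{10-\beta(2H+1)}/n^8)$. With $n$ coupled to $K$ by the scaling fixed below, both pointwise bounds are $O(K^{-H})$, so~\eqref{unif_bound_delta} holds with a constant $\bar{\Delta}$.

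Plugging these estimates and $r(K)=O(K^{-2H})$ into Theorem~\ref{thm_estimee} gives
\[ \E[|X_t-\hat{X}^K_t|^2]\le C\Big(K^{-2H}+\frac{K^{\beta(3-2H)}}{n^2}+\frac{K^{10-\beta(2H+1)}}{n^8}\Big),\qquad t\in[0,T]. \]
Writing $K=n^{\gamma}$ and imposing that the three exponents coincide forces $\gamma[\beta(3-2H)+2H]=2$ and $\gamma[10-\beta(2H+1)+2H]=8$; dividing the second relation by the first yields $10-6H=\beta(13-6H)$, i.e.\ $\beta=\tfrac{10-6H}{13-6H}$, and then $\gamma=\tfrac{2}{\beta(3-2H)+2H}=\tfrac{13-6H}{15-6H}$, so all three terms decay like $K^{-2H}=n^{-2H(13-6H)/(15-6H)}$, which is the claim. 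I expect the only genuinely delicate point to be the uniform-in-$t$ fourth-derivative bound on $g$ over $[K^\beta,K]$ — this is exactly where the $\mathcal{C}^4$ hypothesis on $M_j$ and the pushing of the left endpoint away from the origin are used; everything else is the bookkeeping of the three error contributions and the elementary optimisation over $(\beta,\gamma)$.
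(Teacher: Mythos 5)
Your proposal is correct and follows essentially the same route as the paper's proof: apply Theorem~\ref{thm_estimee}, bound the Riemann-type error on $[0,K^\beta]$ as in Corollary~\ref{cor_approx} by $O(K^{\beta(3/2-H)}/n)$, bound the composite Simpson error on $[K^\beta,K]$ via the fourth derivative of $\psi_t(\rho)=c_HM_j(\rho)\rho^{-H-1/2}e^{-\rho t}$ (which is $O(K^{-\beta(H+1/2)})$ uniformly in $t\in[0,T]$), and equalize the three contributions to obtain $\beta=\frac{10-6H}{13-6H}$ and $K\sim n^{\frac{13-6H}{15-6H}}$. The only cosmetic difference is that you optimize over the squared terms while the paper works with their square roots, which is equivalent.
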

We clearly have $\frac 56 \le \frac{13-6H}{15-6H}$ for $H\in(0,1/2)$ and notice that $\hat{\lambda}^S$ weights $3n+1=O(n)$ different points. Thus, the approximation given by Proposition~\ref{prop_Simpson} is asymptotically better than the one given by Corollary~\ref{cor_approx}. 

\begin{proof}
  We aim at applying Theorem~\ref{thm_estimee}. We have
  \begin{align*} &\hat{G}^K_j(t)-G^K_j(t)=\sum_{i=1}^n \int_{I^{K^\beta}_{i,n}} \left[ M_j(\rho^{K^\beta}_{i,n})e^{-\rho^{K^\beta}_{i,n} t} -M_j(\rho)e^{-\rho t} \right] \lambda(d \rho) -\int_{K^\beta}^K M_j(\rho)e^{-\rho t} c_H\rho^{-H-1/2}d \rho  \\
    &+\frac{c_H(K-K^\beta)}{6n}\sum_{i=1}^n \Big[(\rho_{i,n,0}^K)^{-H-\frac12}M_j(\rho_{i,n,0}^K) + 
4(\rho_{i,n,1}^K)^{-H-\frac12}M_j(\rho_{i,n,1}^K)+(\rho_{i,n,2}^K)^{-H-\frac12}M_j(\rho_{i,n,2}^K)\Big].
  \end{align*}
  The norm of the first sum can be upper bounded by $O\left(\lambda([0,K^\beta))\frac {K^\beta} n\right)=O\left(\frac {K^{\beta(3/2-H)}} n\right)$,  as in the proof Corollary~\ref{cor_approx}. For the other terms, we work componentwise and may assume w.l.o.g. that $M_j$ is real valued. Let $\psi_t(\rho)=c_HM_j(\rho)\rho^{-H-1/2}e^{-\rho t}$.  The well known convergence result on the Simpson's rule (see e.g. \cite{IsKe}, p.\ 339) allows to upper bound the norm of the other terms by
    $$ \frac{\sup_{\rho\in[K^\beta, K]} \psi_{t}^{(4)}(\rho) }{90n^4}\Big(\frac{K-K^\beta}{2}\Big)^5.$$
 We get that $\sup_{t\in[0,T]}\sup_{\rho\in[K^\beta, K]} \psi_{t}^{(4)}(\rho)={{O}}(K^{-\beta(H+1/2)})$ by using that the derivatives of $M_j$ are bounded and $0\le e^{-\rho t}\le 1$. This leads to
  \begin{align}
\forall t\in[0,T],\    \| \hat{G}^K_j(t)-G^K_j(t)\| \le C\left( \frac {K^{\beta(3/2-H)}} n + \frac{K^{5-\beta(H+1/2)}}{n^4}\right).
  \end{align}
Note that~\eqref{unif_bound_delta} is then satisfied for $n\ge \max(K^{\beta(3/2-H)},K^{[5-\beta(H+1/2)]/4})$. Then, by Theorem~\ref{thm_estimee} and Lemma~\ref{Lem:trunc:error}, we then get
  $$\forall t\in[0,T],\ \sqrt{\E[|\hat{X}^K_t-X_t|^2]}\le C \left(K^{-H}+ \frac {K^{\beta(3/2-H)}} n + \frac{K^{5-\beta(H+1/2)}}{n^4}\right).$$
By taking $\beta=\frac{10-6H}{13-6H}$ and $K\sim n^{\frac{13-6H}{15-6H}}$, we equalize the three terms and get the claim. 
\end{proof}

We can now go further and use higher order numerical integration algorithm such as the Newton-Cotes method, which for any even number $J\in \N$ and any smooth function $f: [a,b]\rightarrow \R$ gives (see e.g.  \cite[Theorem~1, p. 310]{IsKe})
 $$ \int_a^b f(x)dx=(b-a)\sum_{j=0}^J c^J_j f(a+j\frac{b-a}{J}) + \tilde c^J (b-a)^{J+3}f^{(J+2)}(\xi), \text{ with } \xi\in(a,b), $$  
 where the coefficients $(c^J_j)_{0\le j\le J }$ and $ \tilde c^J $ are known explicitly.  We recover the Simpson's rule  by taking $J=2$. Hence, one can use the Newton-Cotes method on the interval $[K^\beta, K]$. This leads to a new measure 
 \begin{align}\label{lambda_NC}
 {\hat\lambda}^{NC}(d\rho)=&\sum_{i=1}^n \lambda(I^{K^\beta}_{i,n}) \delta_{\rho^{K^\beta}_{i,n}} +\frac{c_H(K-K^\beta)}{n}\sum_{i=1}
^n \sum_{j=0}^J  c^J_j (\rho^{K,J}_{i,n,j})^{-H-\frac12}\delta_{\rho^{K,J}_{i,n,j}} 
\end{align}
 with $\rho^{K,J}_{i,n,j}=K^\beta+\frac{K-K^\beta}{n}( i-1 +\frac jJ)$.
 \begin{prop}\label{prop_NC}
   Suppose that $\lambda$ is given by~\eqref{frac_mes}. Let us assume that the functions  $M_1$ and $M_2$ are $\mathcal{C}^\infty$ with bounded derivatives. Let $J \in \N$ and $\hat{G}^K_j(t)=\int_{\R_+} e^{-\rho t}M_j(\rho) \hat{\lambda}^{NC}(d\rho)$ with $\hat{\lambda}^{NC}$ defined by~\eqref{lambda_NC}.
  With $\beta=\frac{2(J+3)-2(J+1)H}{3J+7-2(J+1)H}$ and $K\sim n^{\frac{3J+7-2(J+1)H}{3J+9-2(J+1)H}}$, there exists a constant $C\in \R_+^*$ such that 
  $$\forall t \in [0,T], \ \E[|\hat{X}^K_t-X_t|^2]\le C n^{-2H\times \frac{3J+7-2(J+1)H}{3J+9-2(J+1)H}} .$$
For any $\varepsilon\in(0,1)$, there exists $J$ such that $\sup_{t\in[0,T]} \E[|\hat{X}^K_t-X_t|^2]=O(n^{-2H\times(1-\varepsilon)})$.
 \end{prop}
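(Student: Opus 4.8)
The plan is to apply Theorem~\ref{thm_estimee}, following the proof of Proposition~\ref{prop_Simpson} almost verbatim, the only change being that Simpson's rule on $[K^\beta,K]$ is replaced by the composite Newton--Cotes rule of order~$J$. Setting $\psi_t(\rho)=c_H M_j(\rho)\rho^{-H-1/2}e^{-\rho t}$, so that $G_j^K(t)=\int_0^{K^\beta}\psi_t(\rho)\,d\rho+\int_{K^\beta}^{K}\psi_t(\rho)\,d\rho$, I would note that the first sum in $\hat\lambda^{NC}$ is exactly the Riemann-type discretization of $\int_0^{K^\beta}$ used in Corollary~\ref{cor_approx} (with $K$ replaced by $K^\beta$), while the second sum is precisely the composite Newton--Cotes quadrature of $\int_{K^\beta}^{K}$ over the $n$ equal subintervals of length $h=(K-K^\beta)/n$ with equispaced nodes $\rho^{K,J}_{i,n,j}=K^\beta+h(i-1+j/J)$. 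Hence $\hat G_j^K(t)-G_j^K(t)$ splits into these two quadrature errors.

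For the Riemann part, the bound $O\big(\lambda([0,K^\beta))\,K^\beta/n\big)=O\big(K^{\beta(3/2-H)}/n\big)$ follows exactly as in the proof of Corollary~\ref{cor_approx}, using $\lambda([0,K^\beta))=c_H(1/2-H)K^{\beta(1/2-H)}$ and the Lipschitz continuity of $M_j$ (a consequence of the present $\mathcal C^\infty$, bounded-derivatives assumption). For the Newton--Cotes part, I would invoke the classical error formula \cite[Theorem~1, p.~310]{IsKe}: the local error on subinterval~$i$ is $\tilde c^J h^{J+3}\psi_t^{(J+2)}(\xi_i)$, so summing over the $n$ subintervals the total error is at most $|\tilde c^J|\,n\,h^{J+3}\sup_{\rho\in[K^\beta,K]}|\psi_t^{(J+2)}(\rho)|=O\big(K^{J+3}n^{-(J+2)}\sup_{\rho\in[K^\beta,K]}|\psi_t^{(J+2)}(\rho)|\big)$. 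The key estimate is then $\sup_{t\in[0,T]}\sup_{\rho\in[K^\beta,K]}|\psi_t^{(J+2)}(\rho)|=O(K^{-\beta(H+1/2)})$: by Leibniz's rule $\psi_t^{(J+2)}$ is a finite linear combination of products $M_j^{(a)}(\rho)\,(\rho^{-H-1/2})^{(b)}\,(e^{-\rho t})^{(c)}$ with $a+b+c=J+2$, where $M_j^{(a)}$ is bounded, $(\rho^{-H-1/2})^{(b)}=O(\rho^{-H-1/2})$ uniformly on $\{\rho\ge K^\beta\}$ once $K>1$, and $|(e^{-\rho t})^{(c)}|=t^ce^{-\rho t}\le T^c$; so every term is $O(\rho^{-H-1/2})$ uniformly in $t$, and one evaluates at $\rho=K^\beta$.

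Combining the two contributions gives $\|\hat G_j^K(t)-G_j^K(t)\|\le C\big(K^{\beta(3/2-H)}/n+K^{J+3-\beta(H+1/2)}/n^{J+2}\big)$ uniformly on $[0,T]$, which makes~\eqref{unif_bound_delta} hold with a constant function $\bar\Delta$ for $n$ large enough; Theorem~\ref{thm_estimee} together with Lemma~\ref{Lem:trunc:error} ($r(K)=O(K^{-2H})$) then yields
\[\sqrt{\E[|\hat{X}_t^K-X_t|^2]}\le C\Big(K^{-H}+\frac{K^{\beta(3/2-H)}}{n}+\frac{K^{J+3-\beta(H+1/2)}}{n^{J+2}}\Big).\]
It then remains to optimize: imposing $K\sim n^a$ and equalizing the three powers of~$n$ gives a linear system in $(a,\beta)$ whose solution is $\beta=\frac{2(J+3)-2(J+1)H}{3J+7-2(J+1)H}$ and $a=\frac{3J+7-2(J+1)H}{3J+9-2(J+1)H}$, so that $\E[|\hat{X}_t^K-X_t|^2]\le C n^{-2Ha}$, which is the stated rate. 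For the last assertion, write $a=1-\frac{2}{3J+9-2(J+1)H}$; since $3-2H>0$ the denominator tends to $+\infty$ with $J$, hence $a\to1$, and given $\varepsilon\in(0,1)$ any $J$ with $a\ge 1-\varepsilon$ gives $\E[|\hat{X}_t^K-X_t|^2]\le Cn^{-2Ha}\le Cn^{-2H(1-\varepsilon)}$ for $n\ge1$.

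The main obstacle is the uniform derivative bound $\sup_{t\in[0,T],\,\rho\in[K^\beta,K]}|\psi_t^{(J+2)}(\rho)|=O(K^{-\beta(H+1/2)})$: this is where the $\mathcal C^\infty$, bounded-derivatives hypothesis on $M_j$ and the uniform control of $e^{-\rho t}$ and its derivatives over $t\in[0,T]$ are genuinely used, and the estimate must be uniform both in $t$ and, after the coupling $K\sim n^a$, along the scheme. The remaining choice of $\beta$ and $a$ is elementary, if somewhat bookkeeping-heavy, algebra.
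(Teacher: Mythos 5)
Your proposal is correct and follows essentially the same route as the paper: the same split of $\hat G_j^K-G_j^K$ into the Riemann error on $[0,K^\beta]$ and the composite Newton--Cotes error on $[K^\beta,K]$, the same bound $|\tilde c^J|\,(K-K^\beta)^{J+3}n^{-(J+2)}\sup_{\rho\in[K^\beta,K]}|\psi_t^{(J+2)}(\rho)|$ with $\sup_{t,\rho}|\psi_t^{(J+2)}(\rho)|=O(K^{-\beta(H+1/2)})$, followed by Theorem~\ref{thm_estimee} and Lemma~\ref{Lem:trunc:error} and the same equalization of the three terms yielding the stated $\beta$ and exponent. Your added details (the Leibniz-rule justification of the derivative bound and the explicit linear system for $(a,\beta)$) simply spell out steps the paper leaves implicit.
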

 We note that we get back Proposition~\ref{prop_Simpson} in the case $J=2$. 
\begin{proof}
  We follow the same arguments as in the proof of Proposition~\ref{prop_Simpson}. The terms corresponding to the Newton-Cotes method can be upper bounded by
$|\tilde c^J| \frac{\sup_{\rho\in[K^\beta, K]} \psi_{t}^{(J+2)}(\rho) }{n^{J+2}}\Big({K-K^\beta}\Big)^{J+3}$, that is uniformly $O\left(\frac{K^{J+3-\beta(H+1/2)}}{n^{J+2}}\right)$ in $t\in[0,T]$. We get 
$$\forall t\in[0,T], \ 
 \|\hat{G}^K_j(t)-G^K_j(t)\|\le C \left(\frac{K^{\beta(3/2-H)}}n + \frac{K^{J+3-\beta(H+1/2)}}{n^{J+2}} \right),$$
and then by Corollary~\ref{cor_approx} and Lemma~\ref{Lem:trunc:error}, we obtain
  \begin{equation}\label{intermed_prop_NC}\forall t\in[0,T],\ \sqrt{\E[|\hat{X}^K_t-X_t|^2]}\le C \left(K^{-H}+ \frac {K^{\beta(3/2-H)}} n + \frac{K^{J+3-\beta(H+1/2)}}{n^{J+2}} \right).
  \end{equation}
With $\beta=\frac{2(J+3)-2(J+1)H}{3J+7-2(J+1)H}$ and $K\sim n^{\frac{3J+7-2(J+1)H}{3J+9-2(J+1)H}}$, the three terms are of the same order and we get the first claim. We get the second claim noticing that $ \frac{3J+7-2(J+1)H}{3J+9-2(J+1)H} \underset{J\to +\infty} \to 1$.  
\end{proof}

In dimension $d=1$ with $M_1=M_2\equiv1$, it is possible to take a particular value for $\rho_{i,n}^{K}$ in $I^K_{i,n}$ that improves the rate of convergence. This is stated in the next proposition.

\begin{prop}\label{prop_1d}
  Let us assume that $d=1$ and $M_1=M_2\equiv 1$. Let us define
  $$\rho_{i,n}^K=\frac{\int_{I^K_{i,n}}\rho \lambda(d\rho)}{\lambda(I^K_{i,n})}.$$
  \begin{enumerate}
  \item Let $\hat{\lambda}(d\rho)$ be defined by~\eqref{def_hat_lambda} with these particular values for $\rho_{i,n}^K$. Then, the approximation $\hat{G}^K_j(t)=\int e^{-\rho t} \hat{\lambda}(d\rho)$ with $K\sim n^{\frac 45}$ leads to 
    $$\exists C>0, \forall t \in [0,T], \E[|\hat{X}^K_t-X_t|^2]\le C n^{-2H \times \frac 45}.$$
  \item Let $\hat{\lambda}^{NC}(d\rho)$ be defined by~\eqref{lambda_NC} with these particular values for $\rho_{i,n}^{K^\beta}$.  Then, the approximation $\hat{G}^K_j(t)=\int e^{-\rho t} \hat{\lambda}^{NC}(d\rho)$ with
 $\beta =\frac{4J+12-2HJ}{ 5J+12-2HJ }$ and  $K=n^{\frac 4{5\beta+2H(1-\beta)}}$
leads to 
$$\exists C>0, \forall t \in [0,T], \E[|\hat{X}^K_t-X_t|^2]\le C n^{-2H \times \frac{5J+12-2HJ}{5J+15-2HJ}}.$$
In particular for Simpson's rule ($\hat{\lambda}^{S}$), we get $\sup_{t\in [0,T]} \E[|\hat{X}^K_t-X_t|^2]=O( n^{-2H \times \frac{22-4H}{25-4H}})$. 
  \end{enumerate}
\end{prop}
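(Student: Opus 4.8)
The plan is to apply Theorem~\ref{thm_estimee} in both cases, so that it suffices to bound the deterministic kernel error $\sup_{t\in[0,T]}\|\hat{G}^K_j(t)-G^K_j(t)\|$ and to check that~\eqref{unif_bound_delta} holds along the chosen scaling between $n$ and $K$; combined with $r(K)=O(K^{-2H})$ from Lemma~\ref{Lem:trunc:error}, this reduces everything to a one- or two-parameter optimisation. The only genuinely new input, compared with Corollary~\ref{cor_approx} and Propositions~\ref{prop_Simpson}--\ref{prop_NC}, is that the barycentric node $\rho^K_{i,n}=\int_{I^K_{i,n}}\rho\,\lambda(d\rho)/\lambda(I^K_{i,n})$ satisfies $\int_{I^K_{i,n}}(\rho-\rho^K_{i,n})\,\lambda(d\rho)=0$, which cancels the first-order term in a Taylor expansion of $\rho\mapsto e^{-\rho t}$ and thus buys one extra power of $K/n$.

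For part~(1), I would write $\hat{\Delta}^K_j(t)=\sum_{i=1}^n\int_{I^K_{i,n}}\big(e^{-\rho^K_{i,n}t}-e^{-\rho t}\big)\lambda(d\rho)$ and Taylor-expand $e^{-\rho t}$ to second order around $\rho^K_{i,n}$, so that the linear term integrates to $0$ by the choice of $\rho^K_{i,n}$ and the Lagrange remainder is bounded by $\frac{t^2}{2}(\rho-\rho^K_{i,n})^2\le\frac{T^2}{2}(K/n)^2$ on $I^K_{i,n}$. Summing and using $\sum_i\lambda(I^K_{i,n})=\lambda([0,K))=c_H(1/2-H)K^{1/2-H}$ gives $\|\hat{\Delta}^K_j(t)\|\le C\,t^2 K^{5/2-H}/n^2$ uniformly on $[0,T]$, so~\eqref{unif_bound_delta} holds (with $\bar{\Delta}$ independent of $K$) once $n\ge K^{5/4-H/2}$, which is ensured for $n\ge 1$ when $K\sim n^{4/5}$. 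Theorem~\ref{thm_estimee} then yields $\E[|\hat{X}^K_t-X_t|^2]\le C(K^{-2H}+K^{5-2H}/n^4)$, and the two terms are balanced exactly for $n^4\sim K^5$, i.e.\ $K\sim n^{4/5}$, giving the rate $n^{-2H\times 4/5}$.

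For part~(2), I would split $\hat{G}^K_j(t)-G^K_j(t)$ into the part coming from $[0,K^\beta)$ and the part coming from $[K^\beta,K]$. The first part is a barycentric Riemann sum on intervals of length $K^\beta/n$, so the argument of part~(1) gives a bound $O\big(K^{\beta(5/2-H)}/n^2\big)$. The second part is handled by the Newton--Cotes estimate exactly as in the proof of Proposition~\ref{prop_NC}: with $\psi_t(\rho)=c_HM_j(\rho)\rho^{-H-1/2}e^{-\rho t}$ one has $\sup_{[K^\beta,K]}|\psi^{(J+2)}_t(\rho)|=O(K^{-\beta(H+1/2)})$ uniformly in $t\in[0,T]$, whence a bound $O\big(K^{J+3-\beta(H+1/2)}/n^{J+2}\big)$. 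Checking~\eqref{unif_bound_delta} again for $n$ large along the scaling, Theorem~\ref{thm_estimee} and Lemma~\ref{Lem:trunc:error} give
\[
\sqrt{\E[|\hat{X}^K_t-X_t|^2]}\le C\Big(K^{-H}+\frac{K^{\beta(5/2-H)}}{n^2}+\frac{K^{J+3-\beta(H+1/2)}}{n^{J+2}}\Big).
\]
Writing $K=n^\gamma$ and imposing that the truncation term $K^{-H}$ match each of the two approximation terms gives two linear equations in $(\gamma,\beta)$; solving yields $\beta=\frac{4J+12-2HJ}{5J+12-2HJ}$ and $\gamma=\frac{4}{5\beta+2H(1-\beta)}=\frac{5J+12-2HJ}{5J+15-2HJ}$, hence the rate $\E[|\hat{X}^K_t-X_t|^2]\le Cn^{-2H\gamma}$; specialising $J=2$ gives the claimed $O\big(n^{-2H\times\frac{22-4H}{25-4H}}\big)$.

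The hard part is purely organisational: one has to carry along the auxiliary lower bounds on $n$ required by~\eqref{unif_bound_delta} (of the form $n\gtrsim K^{\beta(5/2-H)/2}$ and $n\gtrsim K^{[J+3-\beta(H+1/2)]/(J+2)}$) and verify they are compatible with $K=n^\gamma$ — which they are, since $H>0$ makes both exponents strictly less than $1$ after substituting the chosen $\gamma$ — and, for part~(2), check that the $\beta$ and $\gamma$ found by matching the truncation term to the two approximation terms automatically make those two approximation terms equal as well, and that the resulting $\beta$ lies in $(0,1)$ for all $H\in(0,1/2)$. All the analytic estimates themselves are already available from the proofs of Corollary~\ref{cor_approx} and Propositions~\ref{prop_Simpson}--\ref{prop_NC}; the single new ingredient is the vanishing first moment of $\lambda$ over each $I^K_{i,n}$ about its barycentre.
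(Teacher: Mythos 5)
Your proposal is correct and follows essentially the same route as the paper: a second-order Taylor expansion of $\rho\mapsto e^{-\rho t}$ about the barycentric node, whose first-order term vanishes by the defining property of $\rho^K_{i,n}$, giving the bound $C\,t^2K^{5/2-H}/n^2$ and hence, via Theorem~\ref{thm_estimee} and Lemma~\ref{Lem:trunc:error}, the estimate $C(K^{-2H}+K^{5-2H}/n^4)$ balanced at $K\sim n^{4/5}$; for part~(2) you combine this with the Newton--Cotes bound from Proposition~\ref{prop_NC} and equalize the three terms, exactly as the paper does. The only cosmetic differences (Lagrange versus integral form of the remainder, and your more explicit solving of the two balancing equations for $\beta$ and $\gamma$) do not change the argument.
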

\noindent It is worth noticing that for the one-dimensional setting, the rate of convergence with factor $\frac{4}{5}$ obtained in the first statement is the same as the one obtained by Abi Jaber and El Euch~\cite{EleAbi} on the kernels $G_j$ and their discrete approximating kernels $\hat{G}_j^K$. Here, we  get in addition a strong estimation error on the processes with the same rate. Note that the factor  $\frac{4}{5}$  improves the factor $\frac 23$ obtained in~\eqref{rate23}, when the values of $\rho^K_{i,n}$ are only assumed to be in $I^K_{i,n}$. Similarly, we notice that
$$ \frac{3J+7-2(J+1)H}{3J+9-2(J+1)H} <\frac{5J+12-2HJ}{5J+15-2HJ}<1,$$
which shows that the convergence rate is improved with respect to Proposition~\ref{prop_NC} but the factor still remains under~1. Very recently, for the one-dimensional setting, Harms~\cite{Harms} has obtained an arbitrary rate of convergence $O(n^{-r})$ by using quadrature rules with $m>3r/2H$ points on a geometric discretization grid in~$\rho$ with $n$ intervals. However, the constant $C$ such that $\sup_{t\in [0,T]} \sqrt{\E[|\hat{X}^K_t-X_t|^2]}\le C n^{-r}$ may be quite large. To be more precise, the constant $C$ given by~\cite[Lemma 2]{Harms} scales as $(1/\eta)^m$ (since the constant $C_3$ defined there scales as $1/\eta$), where $\eta$ may be close to zero and $m$ quite large.  This is confirmed by the numerical expermient~\cite[Figure 3]{Harms} where for example, an error of $10^{-2}$ is obtained for $H=0.1$ with $m=20$ and about $25$ intervals, which makes $25\times20=500$ exponential factors, while in the present paper we obtain very good approximations with less than~$80$ exponential factors, see~Table~\ref{table_approx}. Besides, in practice the asymptotic rate of convergence is not the only issue. Since one approximates the SVE by an SDE with a $O(n)$ times higher dimension, one is rather interested to use a not to high value of~$n$. We will  discuss of this in the next numerical section, see Subsection~\ref{subsec_Improvement}. 
\begin{proof}[Proof of Proposition~\ref{prop_1d}]
  For the first assertion,  we remark that
  $$|G^K_j(t)-\hat{G}^K_j(t)|=\left| \sum_{i=1}^n \int_{I^K_{i,n}}(e^{-\rho t}-e^{-\rho_{i,n}^K t})\lambda(d\rho)\right|\le\sum_{i=1}^n\left|\int_{I^K_{i,n}}(e^{-\rho t}-e^{-\rho_{i,n}^K t})\lambda(d\rho)\right|.$$
  From a Taylor expansion, we get
  $$ e^{-\rho t}-e^{-\rho_{i,n}^K t}= -t(\rho-\rho_{i,n}^K)e^{-\rho_{i,n}^K t}+\int_{\rho_{i,n}^K}^\rho t^2 e^{-x t}(\rho-x)dx.$$
  When integrating with respect to $\lambda$ over $I_{i,n}^K$ , the first term vanishes and we get
  \begin{align*}
    \left| \int_{I_{i,n}^K}(e^{-\rho t}-e^{-\rho_{i,n}^K t})\lambda(d\rho) \right| &= \left|\int_{I_{i,n}^K}\int_{\rho_{i,n}^K}^\rho t^2 e^{-x t}(\rho-x)dx\lambda(d\rho) \right|
    \\ &
    \le t^2\int_{I_{i,n}^K}\int_{\rho_{i,n}^K}^\rho  |\rho-x|dx\lambda(d\rho)=\frac{t^2}{2}\int_{I_{i,n}^K} (\rho-\rho_{i,n}^K)^2\lambda(d\rho)  \le \frac{t^2 K^2}{2n^2}\lambda\left(I_{i,n}^K\right)
  \end{align*}
  since $\rho_{i,n}^K \in I^K_i$. Summing over $i$, we get 
 \begin{equation}\label{ineq:ker}
|G^K_j(t)-\hat{G}^K_j(t)| \le  \frac{t^2 K^2}{2n^2}\lambda\left([0,K]\right).
 \end{equation}
 Thus, \eqref{unif_bound_delta} holds for $n\ge K\sqrt{\lambda\left([0,K]\right)}$. By Theorem~\ref{thm_estimee} and Lemma~\ref{Lem:trunc:error}, we get the existence of $C\in \R_+^*$ such that
 $$\forall t \in [0,T], \E[|X^K_t-X_t|^2]\le C\left(K^{-2H}+ \frac{ K^{5-2H}}{n^4}\right).$$
 This leads to the claim with $K\sim n^{\frac 45}$. 

 For the proof of the second point, we use the result of the first point and repeat the arguments of the Proof of Proposition~\ref{prop_NC}. We thus get
 $$\forall t\in[0,T],\ \sqrt{\E[|\hat{X}^K_t-X_t|^2]}\le C \left(K^{-H}+ \frac {K^{\beta(5/2-H)}}{ n^2} + \frac{K^{J+3-\beta(H+1/2)}}{n^{J+2}} \right). $$
 instead of~\eqref{intermed_prop_NC}. Taking $\beta =\frac{4J+12-2HJ}{ 5J+12-2HJ }$ and  $K=n^{\frac 4{5\beta+2H(1-\beta)}}$ makes the three terms of the same order and leads to the result. The case $J=2$ corresponds to Simpson's rule. 
\end{proof}

\section{Numerical experiments}\label{Sec_Num}

\subsection{Validation of the theoretical results}

The aim of this section is to illustrate the different convergence rates on a very simple example for the rough kernel~\eqref{frac_mes}. Namely, we take $b(x)=0$, $\sigma(x)=1$, which means that
 $$X_t=X_0+\frac 1{\Gamma(H+1/2)} \int_0^t(t-s)^{H-\frac12}dW_s .$$
 For this process, we have implemented the four following approximations.
 \begin{enumerate}
 \item $\hat{X}^{1,n}_t$ the approximation given by~Corollary~\ref{cor_approx} with $K=n^{\frac23}$, $\rho^K_{i,n}=\frac{i-1/2}n K$.
   From~\eqref{rate23}, the theoretical rate of convergence is $\E[|\hat{X}^{1,n}_t-X_t|]=O(n^{-H\times \frac23})$. 
 \item $\hat{X}^{2,n}_t$ the approximation given by Proposition~\ref{prop_1d} with $\hat{\lambda}$ and  $K=n^{\frac45}$.
   The theoretical rate of convergence is $\E[|\hat{X}^{2,n}_t-X_t|]=O(n^{-H\times \frac45})$.
 \item $\hat{X}^{3,n}_t$ the approximation given by Proposition~\ref{prop_Simpson} with $K=n^{\frac{13-6H}{15-6H}}$ and $\rho^{ K^{\frac{10-6H}{13-6H}}}_{i,n}=\frac{i-1/2}n K^{\frac{10-6H}{13-6H}}$. The theoretical rate of convergence is $\E[|\hat{X}^{3,n}_t-X_t|]=O(n^{-H\times \frac{13-6H}{15-6H} })$.
 \item $\hat{X}^{4,n}_t$ the approximation given Proposition~\ref{prop_1d} with $\hat{\lambda}^S$, $K=n^{\frac{22-4H}{25-4H}}$. The theoretical rate of convergence is $\E[|\hat{X}^{4,n}_t-X_t|]=O(n^{-H\times \frac{22-4H}{25-4H} })$.   
 \end{enumerate}
 Note that for $0\le \rho_1<\dots<\rho_n$ it is possible to simulate exactly the Gaussian vector
 $$\left(\int_0^t \exp(-\rho_1(t-s))dW_s,\dots,\int_0^t \exp(-\rho_n(t-s))dW_s,\frac 1{\Gamma(H+1/2)} \int_0^t(t-s)^{H-\frac12}dW_s \right).$$
 It is centered with covariance matrix $\Sigma$ such that
 \begin{align}
   \Sigma_{i,j}&=\frac{1-\exp(-(\rho_i+\rho_j)t)}{\rho_i+\rho_j} \text{ for }1\le i,j\le n, \notag\\
   \Sigma_{n+1,n+1}&=\frac 1{2H \Gamma(H+1/2)^2}t^{2H}, \label{def_Sigma}\\
 \Sigma_{i,n+1}&=\rho_i^{-H-1/2}\int_0^{\rho_it}\frac 1{\Gamma(H+1/2)}s^{H-1/2}e^{-s}ds.\notag
 \end{align}
 The last quantity involves the incomplete gamma function that can be calculated efficiently. For each $j\in \{1,\dots,4\}$, we have calculated, using the following basic lemma, the quantity
 \begin{equation*}
   \zeta^{j,n}_t:=\E[|\hat{X}^{j,n}_t-X_t|^2].
 \end{equation*}
 We reported the obtained results in Tables \ref{table1}--\ref{table4}.
 \begin{lemma} \label{lem_calcul_L2}
   Let $0\le \rho_1<\dots<\rho_n$, $\alpha_1,\dots,\alpha_n\in \R$. Then, 
 $$\sum_{i=1}^n\alpha_i \int_0^t \exp(-\rho_n(t-s))dW_s  -\frac 1{\Gamma(H+1/2)} \int_0^t(t-s)^{H-\frac12}dW_s $$
 is a centered Gaussian random variable with variance $$\int_0^t\left( \frac{(t-s)^{H-\frac12}}{\Gamma(H+1/2)}-\sum_{i=1}^n \alpha_i \exp(-\rho_n(t-s))\right)^2ds= v^\top \Sigma v,$$ where $\Sigma$ is defined by~\eqref{def_Sigma} and $v\in \R^{n+1}$ is defined by $v_i=\alpha_i$ for $1\le i\le n$ and $v_{n+1}=-1$. 
 \end{lemma}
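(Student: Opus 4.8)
The statement is essentially a direct computation, so the plan is to set up the right Gaussian/Itô-isometry framework and then identify the resulting integral with the quadratic form $v^\top \Sigma v$. First, I would observe that both $\int_0^t \exp(-\rho_i(t-s))\,dW_s$ and $\frac{1}{\Gamma(H+1/2)}\int_0^t (t-s)^{H-1/2}\,dW_s$ are Wiener integrals of deterministic $L^2([0,t])$ functions: the kernels $s\mapsto \exp(-\rho_i(t-s))$ are bounded hence square integrable, and $s\mapsto (t-s)^{H-1/2}$ is square integrable on $[0,t]$ because $2(H-1/2)>-1$ for $H\in(0,1/2)$. Therefore the linear combination
$$
Z_t:=\sum_{i=1}^n \alpha_i \int_0^t \exp(-\rho_i(t-s))\,dW_s - \frac{1}{\Gamma(H+1/2)}\int_0^t (t-s)^{H-1/2}\,dW_s
$$
is itself a Wiener integral $\int_0^t g(s)\,dW_s$ with $g(s)=\sum_{i=1}^n \alpha_i \exp(-\rho_i(t-s)) - \frac{(t-s)^{H-1/2}}{\Gamma(H+1/2)}$, hence a centered Gaussian random variable, and by the Itô isometry its variance equals $\int_0^t g(s)^2\,ds$, which is exactly the integral displayed in the lemma.

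Next I would expand the square $g(s)^2$ and integrate term by term. Writing $v_i=\alpha_i$ for $1\le i\le n$ and $v_{n+1}=-1$, and collecting the three types of cross terms, we get
$$
\int_0^t g(s)^2\,ds = \sum_{i=1}^n\sum_{j=1}^n \alpha_i\alpha_j \int_0^t e^{-(\rho_i+\rho_j)(t-s)}\,ds - 2\sum_{i=1}^n \alpha_i \int_0^t e^{-\rho_i(t-s)}\frac{(t-s)^{H-1/2}}{\Gamma(H+1/2)}\,ds + \int_0^t \frac{(t-s)^{H-1}}{\Gamma(H+1/2)^2}\,ds.
$$
It then remains to match each of these integrals with the corresponding entry of $\Sigma$ as defined in~\eqref{def_Sigma}. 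For the first family, the substitution $u=t-s$ gives $\int_0^t e^{-(\rho_i+\rho_j)u}\,du = \frac{1-e^{-(\rho_i+\rho_j)t}}{\rho_i+\rho_j}$, matching $\Sigma_{i,j}$ (with the obvious interpretation $\Sigma_{i,j}=t$ when $\rho_i=\rho_j=0$). For the last term, $\int_0^t u^{2H-1}\,du = \frac{t^{2H}}{2H}$, giving $\Sigma_{n+1,n+1}$. For the cross term, the substitution $u=t-s$ followed by $x=\rho_i u$ turns $\int_0^t e^{-\rho_i u}\frac{u^{H-1/2}}{\Gamma(H+1/2)}\,du$ into $\rho_i^{-H-1/2}\int_0^{\rho_i t}\frac{1}{\Gamma(H+1/2)}x^{H-1/2}e^{-x}\,dx$, which is precisely $\Sigma_{i,n+1}$ (again with the limiting value $\Sigma_{i,n+1}=\frac{t^{H+1/2}}{\Gamma(H+3/2)}$ when $\rho_i=0$). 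Assembling these, $\int_0^t g(s)^2\,ds = \sum_{i,j} v_iv_j\Sigma_{i,j} = v^\top\Sigma v$, which completes the proof.

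There is no real obstacle here; the only point requiring a moment's care is the integrability of the singular kernel $(t-s)^{H-1/2}$ near $s=t$, which is fine since $2H-1>-1$, and the bookkeeping of the $\rho_i=0$ edge cases (permitted because the statement only assumes $0\le\rho_1<\dots<\rho_n$), where the formulas for $\Sigma_{i,j}$ and $\Sigma_{i,n+1}$ must be read as their continuous limits as $\rho\to 0$. Everything else is the Itô isometry plus three elementary substitutions.
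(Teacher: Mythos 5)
Your proof is correct and is exactly the computation the paper has in mind: the lemma is stated there without proof (it is invoked as a ``basic lemma'' for the exact evaluation of the $L^2$ errors), and the intended argument is precisely yours — a linear combination of Wiener integrals of deterministic square-integrable kernels is centered Gaussian, the Itô isometry gives the variance as $\int_0^t g(s)^2\,ds$, and the three substitutions identify the terms of the expanded square with the entries $\Sigma_{i,j}$, $\Sigma_{i,n+1}$ and $\Sigma_{n+1,n+1}$ of \eqref{def_Sigma}, including the $\rho_i=0$ limiting cases. You also correctly read the statement's $\exp(-\rho_n(t-s))$ as the evident typo for $\exp(-\rho_i(t-s))$; the only slip on your side is the intermediate display where the last term should carry $(t-s)^{2H-1}$ rather than $(t-s)^{H-1}$, which is clearly typographical since you then integrate $u^{2H-1}$ and obtain the correct value $t^{2H}/(2H)$.
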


 We have calculated $\zeta^{j,n}_t$ with $n=50$ and $n=100$ for $j\in\{1,2\}$ and $n=16$ and $n=32$ for $j\in\{3,4\}$. Since the measure $\hat{\lambda}^S$ weights $3n+1$ points, this corresponds to approximate with SDEs of dimension $49$ and $97$, making the comparison with the case  $j\in\{1,2\}$ relevant. We have also calculated
 $$\hat{\gamma}^{j,n}_t= \frac 1 {2H\log(2)}\log(\zeta^{j,n}_t/\zeta^{j,2n}_t), $$
 as a numerical estimation of the speed of convergence factor. Indeed, if we had $\E[|\hat{X}^{j,n}_t-X_t|^2]\sim_{n\to \infty} c n^{-2H\times \gamma}$ for some constants $c,\gamma>0$,  then $\hat{\gamma}^{j,n}_t$ would estimate the factor~$\gamma$. In our work, we have obtained  $$\E[|\hat{X}^{j,n}_t-X_t|^2] =_{n\to \infty} O( n^{-2H\times \gamma}),$$ and we have reported this theoretical value of $\gamma$ in the tables below. 
 \begin{table}[h!]
   \begin{tabular}{|r||l|l|l|}
     \hline
     $H$& 0.45 & 0.25 & 0.05 \\
     \hline
     $\zeta^{1,n}_1$ &0.00443 & 0.0547 & 2.1404 \\
     $\zeta^{1,2n}_1$ & 0.00279 & 0.0432 &  2.0436\\
     $\hat{\gamma}^{1,n}_1$ & 0.7433 & 0.6848 & 0.6678  \\
     Theoretical factor & 2/3 & 2/3 & 2/3 \\
     \hline
   \end{tabular}
   \caption{Convergence results for the first approximation, with $n=50$}\label{table1}
 \end{table}
 \begin{table}[h!]
   \begin{tabular}{|r||l|l|l|}
     \hline
     $H$& 0.45 & 0.25 & 0.05 \\
     \hline
     $\zeta^{2,n}_1$ &  0.00024 & 0.0413 & 2.0313  \\
     $\zeta^{2,2n}_1$ &  0.00015 & 0.0313 &1.9218  \\
     $\hat{\gamma}^{2,n}_1$ & 0.80020  & 0.80016  & 0.80003 \\
     Theoretical factor & 0.8 & 0.8 & 0.8 \\
     \hline
   \end{tabular}
   \caption{Convergence results for the second approximation, with $n=50$}\label{table2}
 \end{table}
 \begin{table}[h!]
   \begin{tabular}{|r||l|l|l|}
     \hline
     $H$& 0.45 & 0.25 & 0.05 \\
     \hline
     $\zeta^{3,n}_1$ & 0.00627  & 0.0628  & 2.1869 \\
     $\zeta^{3,2n}_1$ & 0.00357  & 0.0462 &  2.0594 \\
     $\hat{\gamma}^{3,n}_1$ & 0.9064  & 0.8838  &  0.8669 \\
     Theoretical factor & 0.8374 &  0.8519 &  0.8639 \\
     \hline
   \end{tabular}
   \caption{Convergence results for the third approximation, with $n=16$}\label{table3}
  \end{table}
  \begin{table}[h!]
   \begin{tabular}{|r||l|l|l|}
     \hline
     $H$& 0.45 & 0.25 & 0.05 \\
     \hline
     $\zeta^{4,n}_1$ & 0.00046 & 0.0588  & 2.177   \\
     $\zeta^{4,2n}_1$ & 0.00027  & 0.0434  & 2.048  \\
     $\hat{\gamma}^{4,n}_1$ & 0.8713  & 0.8754 & 0.8792 \\
     Theoretical factor & 0.8707 &  0.875 &  0.8790 \\
     \hline
   \end{tabular}
   \caption{Convergence results for the fourth approximation, with $n=16$}\label{table4}
  \end{table}
  
  \vspace{3cm}
  From these numerical results, we observe the following facts:
  \begin{itemize}
  \item For each method, the quality of the approximation downgrades as $H$ gets closer to~$0$. For $H=0.05$, even if we observe empirical rates of convergence that are in line with our theoretical results, the approximation error is around~$2$ for all methods, which is clearly too large for practical use. The next subsection presents significant improvements for this issue.

  \item We notice that the numerical estimation of the speed of convergence factor is always above the theoretical value of $\gamma$. These values coincide quite well for the one-dimensional methods (2nd and 4th methods) and for the case $H=0.05$ for all methods. For the approximations~1 and~3 and the values $H=0.45$ and $0.25$, the theoretical value  of the speed of convergence factor seems to be slightly pessimistic.
  \item The improvement due to the particular choice of $\rho^K_{i,n}$ in dimension~1 is significant. The values of $\zeta^{2,n}_1$ and $\zeta^{2,2n}_1$ (resp. $\zeta^{4,n}_1$ and $\zeta^{4,2n}_1$) are significantly smaller than the one of $\zeta^{1,n}_1$ and $\zeta^{1,2n}_1$ (resp.   $\zeta^{3,n}_1$ and $\zeta^{3,2n}_1$).
  \item The asymptotic acceleration of convergence obtained by Simpson's rule (i.e. by using approximation~3 (resp.~4) instead of~1 (resp.~2)) is not yet observed for these values of~$n$. The approximation~1 (resp. 2)  with $n=50$ gives a slightly better result than approximation~3 (resp. 4) with $n=16$.   
    
  \end{itemize}
  
  \subsection{Improvement of the approximations for the rough kernel : a systematic approach}\label{subsec_Improvement}
  
  In practice, the method provided by truncating and discretizing the integral $\int_0^{+\infty}e^{-\rho t} M(\rho)\lambda(d\rho)$ is partly satisfactory. Its advantage is that it is systematic, and it may lead to good rates of convergence when $\lambda(d\rho)$ has a thin tail and under smoothness assumption. For the rough kernel, $\lambda(d\rho)=c_H\rho^{-H-1/2}d\rho$ is not smooth close to the origin and  has fat tails, which makes the truncation error large. Thus, the convergences  that we obtain in Section~\ref{Sec_Rough} are quite slow, especially when $H$ is close to zero. Here, we present a systematic way to correct this by truncating at a higher level. 

 The principle is the following. All the methods that we have presented consists in truncating the integral $\int_{\R_+} e^{-\rho t } \lambda_H(d\rho)$ at $K=n^{\gamma H}$ for some $\gamma>0$ and then to use a discretization scheme on $[0,K]$. Here, in addition, we take $A>1$ and approximate the integral on $[K,A^nK)$ by using the same discretization rule on each interval $[A^{i-1}K,A^{i}K)$ for $i=1,\dots,n$. Since the size of these intervals does not go to zero, we do not expect to improve the asymptotic rate of convergence: the goal is rather to reduce the truncation error.

      For simplicity, we present this idea only on the approximation~$\hat{\lambda}$ given by Proposition~\ref{prop_1d}.  Namely, let $K>0$ and we define for $i\in \{1,\dots,2n\}$,
\begin{equation}
    I^{K, A}_{i,n}=\left[\frac{i-1}nK,\frac i n K \right) \text { for }i\le n, \  I^{K, A}_{i,n}=\left[KA^{i-n-1}, KA^{i-n} \right) \text { for }n+1 \le i\le 2n.
\end{equation}
We then consider for $i\le 2n$, $\rho^{K,A}_{i,n}=\frac{\int_{I^{K, A}_{i,n}}\rho \lambda_H(d\rho) }{\int_{I^{K, A}_{i,n}}\lambda_H(d\rho)}$, which can be calculated exactly since
$$\frac{\int_{[a,b]}\rho \lambda_H(d\rho)}{\int_{[a,b]} \lambda_H(d\rho)}= \frac{1/2-H}{3/2-H}\times \frac{b^{3/2-H}-a^{3/2-H}}{b^{1/2-H}-a^{1/2-H}} \text{ for } 0\le a<b.$$
Last, we define the corresponding approximating measure $\hat{\lambda}^A$ by
\begin{equation}\label{def_lambdaA}
  \hat{\lambda}^A(d\rho)=\sum_{i=1}^{2n}\lambda_H( I^{K, A}_{i,n}) \delta_{\rho^{K,A}_{i,n}}(d\rho),
\end{equation}
and $\hat{G}^{K,A}(t)= \int_{\R_+} e^{-\rho t}\hat{\lambda}^A(d\rho)$. We have the simple but interesting result. 
\begin{prop}\label{prop_1d_2}
  Let $\hat{\lambda}^A(d\rho)$ be defined by~\eqref{def_lambdaA}, $\hat{\lambda}(d\rho)=\sum_{i=1}^{n}\lambda_H( I^{K, A}_{i,n}) \delta_{\rho^{K,A}_{i,n}}(d\rho)$ be the measure introduced in Proposition~\ref{prop_1d} and $\hat{G}^K(t)=\int_{\R_+} e^{-\rho t}\hat{\lambda}(d\rho)$. Then, we have
  $$ \hat{G}^K(t)\le \hat{G}^{K,A}(t) \le G_{\lambda_H}(t).$$
  If $X$ (resp. $\hat{X}^{K,A}$) denotes the solution of $X_t=x_0+\int_0^tG_{\lambda_H}(t-s)b(\hat{X}_s)ds +\int_0^tG_{\lambda_H}(t-s)\sigma(X_s)dW_s$ (resp. $\hat{X}^{K,A}_t=x_0+\int_0^t\hat{G}^{K,A}(t-s)b(\hat{X}^{K,A}_s)ds +\int_0^t\hat{G}^{K,A}(t-s)\sigma(\hat{X}^{K,A}_s)dW_s$), we have $\E[|\hat{X}^{K,A}_t-X_t|^2]=O(n^{-2H\times \frac45})$ if $K\sim_{n\to \infty} c n^{4/5}$ for some $c\in \R_+^*$. 
\end{prop}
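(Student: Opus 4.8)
The plan is to prove first the pointwise sandwich $\hat{G}^K(t)\le \hat{G}^{K,A}(t)\le G_{\lambda_H}(t)$ — which is the real content of the statement — and then to deduce the $L^2$-rate almost immediately by comparison with the kernel $\hat{G}^K$ already handled in Proposition~\ref{prop_1d}.

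For the left inequality I would simply observe that $\hat{\lambda}^A$ and $\hat{\lambda}$ share the same first $n$ atoms, so that $\hat{G}^{K,A}(t)-\hat{G}^K(t)=\sum_{i=n+1}^{2n}\lambda_H\big(I^{K,A}_{i,n}\big)\,e^{-\rho^{K,A}_{i,n}t}\ge 0$, all the weights being non-negative. For the right inequality the key point is that $\rho^{K,A}_{i,n}$ is the barycentre of $\rho$ for the measure $\lambda_H$ restricted to $I^{K,A}_{i,n}$; since $\rho\mapsto e^{-\rho t}$ is convex, Jensen's inequality yields $\lambda_H\big(I^{K,A}_{i,n}\big)\,e^{-\rho^{K,A}_{i,n}t}\le \int_{I^{K,A}_{i,n}}e^{-\rho t}\lambda_H(d\rho)$ for each $i$. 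The $2n$ intervals $I^{K,A}_{i,n}$ form a partition of $[0,KA^n)\subset\R_+$, so summing over $i$ gives $\hat{G}^{K,A}(t)\le \int_{[0,KA^n)}e^{-\rho t}\lambda_H(d\rho)\le G_{\lambda_H}(t)$.

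The error estimate is then obtained as follows. By Part~1 the kernel $\hat{G}^{K,A}$ (a finite sum of exponentials) satisfies~\eqref{hypkernels} with $\bar{C}=1$, so $\hat{X}^{K,A}$ is well defined, and $0\le G_{\lambda_H}(s)-\hat{G}^{K,A}(s)\le G_{\lambda_H}(s)-\hat{G}^K(s)$, whence $\int_0^T|G_{\lambda_H}(s)-\hat{G}^{K,A}(s)|^2\,ds\le \int_0^T|G_{\lambda_H}(s)-\hat{G}^K(s)|^2\,ds$. Splitting $G_{\lambda_H}-\hat{G}^K=(G_{\lambda_H}-G^K_{\lambda_H})+(G^K_{\lambda_H}-\hat{G}^K)$, the first term contributes at most $r(K)=O(K^{-2H})$ by the computation at the end of the proof of Proposition~\ref{prop:tight_r(K)} together with Lemma~\ref{Lem:trunc:error} (here ${\bf M}_1={\bf M}_2=1$), while the bound~\eqref{ineq:ker} and $\lambda_H([0,K])=O(K^{1/2-H})$ make the second term contribute $O(K^{5-2H}/n^4)$. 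With $K\sim cn^{4/5}$ both quantities are $O(n^{-2H\times 4/5})$, so $\int_0^T|G_{\lambda_H}(s)-\hat{G}^{K,A}(s)|^2\,ds=O(n^{-2H\times 4/5})$, and Theorem~\ref{thm_non_asymp}, applied with $G_1=G_2=G_{\lambda_H}$ and $\hat{G}_1=\hat{G}_2=\hat{G}^{K,A}$, gives $\E[|\hat{X}^{K,A}_t-X_t|^2]=O(n^{-2H\times 4/5})$.

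The only genuinely new ingredient is the monotonicity of Part~1 (via Jensen); the convergence rate in Part~2 is entirely inherited from the estimate already available for $\hat{G}^K$, so I do not anticipate a real obstacle. The two points to verify carefully are that the $2n$ intervals $I^{K,A}_{i,n}$ do partition $[0,KA^n)$ and that each barycentre $\rho^{K,A}_{i,n}$ genuinely lies inside $I^{K,A}_{i,n}$ — so that Jensen applies on each interval and the exponential mass beyond $KA^n$ is simply discarded without spoiling the upper bound by $G_{\lambda_H}$.
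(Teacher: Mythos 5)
Your proof is correct and follows essentially the same route as the paper: the left inequality by positivity of the extra atoms, the right one by Jensen's inequality applied on each interval of the partition of $[0,KA^n)$ with the barycentric nodes $\rho^{K,A}_{i,n}$, and then the rate via the monotone comparison $0\le G_{\lambda_H}-\hat{G}^{K,A}\le G_{\lambda_H}-\hat{G}^K$ combined with the bounds already obtained for $\hat{G}^K$ (truncation error from Lemma~\ref{Lem:trunc:error} and discretization error from~\eqref{ineq:ker}). The only cosmetic difference is that you invoke Theorem~\ref{thm_non_asymp} directly rather than Theorem~\ref{thm_estimee} as the paper does, which is an immaterial variation since both deliver the same $L^2$-in-kernel control of the error.
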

\begin{proof}
  The first inequality is obvious. The second one is a consequence of Jensen inequality that gives $\int_I e^{-\rho t} \lambda_H(d\rho) \ge \lambda_H(I) e^{-\frac{\int_{I}\rho \lambda_H(d\rho) }{\int_{I}\lambda_H(d\rho)} t}$ on any interval $I$ since $\rho\to e^{-\rho t}$ is a convex function. We then get
   $0\le G_{\lambda_H}(t) -\hat{G}^{K,A}(t)\le  G_{\lambda_H}(t) -\hat{G}^{K}(t)$ and thus $\int_0^T (G_{\lambda_H}(t) -\hat{G}^{K,A}(t))^2dt \le \int_0^T (G_{\lambda_H}(t) -\hat{G}^{K}(t))^2dt$ for any $T>0$. This gives by~\eqref{ineq:ker}, Theorem~\ref{thm_estimee} and Lemma~\ref{Lem:trunc:error} the rate of convergence.
\end{proof}

Note that Proposition~\ref{prop_1d_2} gives the same asymptotic rate of convergence than Proposition~\ref{prop_1d}. This is confirmed on our numerical experiments: we have indicated in Table~\ref{table-improved} the $L^2$-errors obtained with $K=n^{4/5}$ and $A=3$ and the estimated rate of convergence~$\hat{\gamma}$ that is close to the theoretical one of $4/5$. However, comparing with Table~\ref{table2} (approximation by $\hat{G}^K$), we see that the error is significantly reduced: for $n=50$ and $H=0.05$, we get a squared error of $0.0112$ instead $2.03$. Thus, if
the rate of convergence is not improved with respect to the approximation given by~$\hat{\lambda}$, the approximation given by~$\hat{\lambda}^A$ significantly reduces the approximation error. 
This suggests that the kernel with the constant~$A$ improves the multiplicative constant in the rate of convergence.
 \begin{table}[H]
   \begin{tabular}{|r||l|l|l|}
     \hline
     $H$& 0.45 & 0.25 & 0.05 \\
     \hline
     $\zeta^{50}_1$ & $1.631 \times 10^{-6}$ & $8.305\times 10^{-5}$ &0.01120   \\
     $\zeta^{200}_1$ & $5.866 \times 10^{-7}$ & $4.567\times 10^{-5}$ &0.002547     \\
     $\zeta^{400}_1$ & $3.520 \times 10^{-7}$ & $3.412\times 10^{-5}$ &0.002408 \\
     $\hat{\gamma}:=\frac 1 {2H\log(2)}\log(\zeta^{200}_1/\zeta^{400}_1)$ & 0.819 & 0.841 & 0.806    \\
     Theoretical factor & 0.8 &  0.8 &  0.8 \\
     \hline
   \end{tabular}
   \caption{Convergence results for $\zeta^n_t=\E[|\hat{X}^{K,A}_t-X_t|^2]$, with $A=3$ and $t=1$. }\label{table-improved}
 \end{table}
 
\begin{figure}
    \centering
  \subfigure[][$H=0.45$, $n=5$]{ \includegraphics[width=0.47\linewidth]{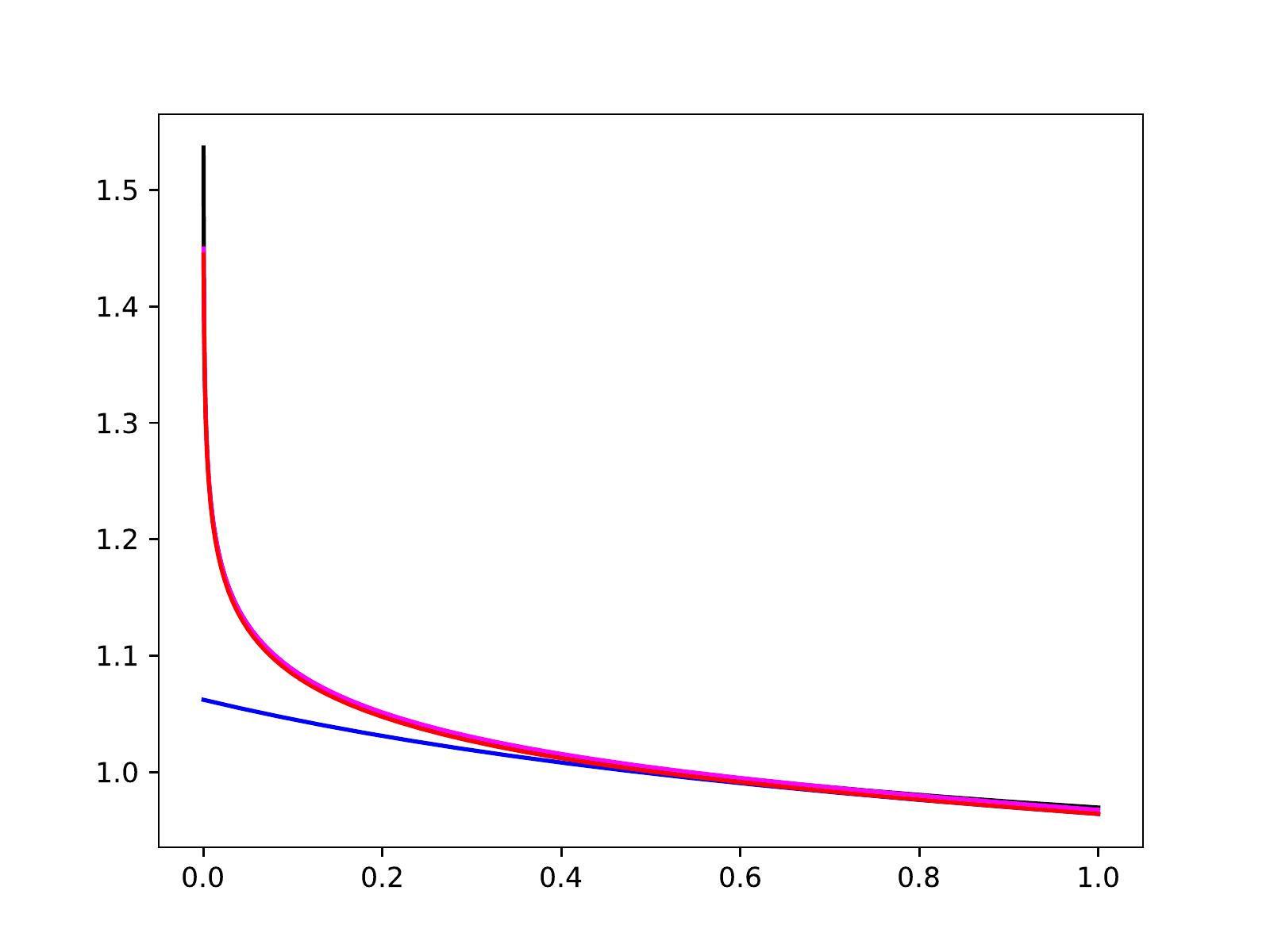}  \label{n5H045}}
  \subfigure[][$H=0.45$, $n=10$]{\includegraphics[width=0.47\linewidth]{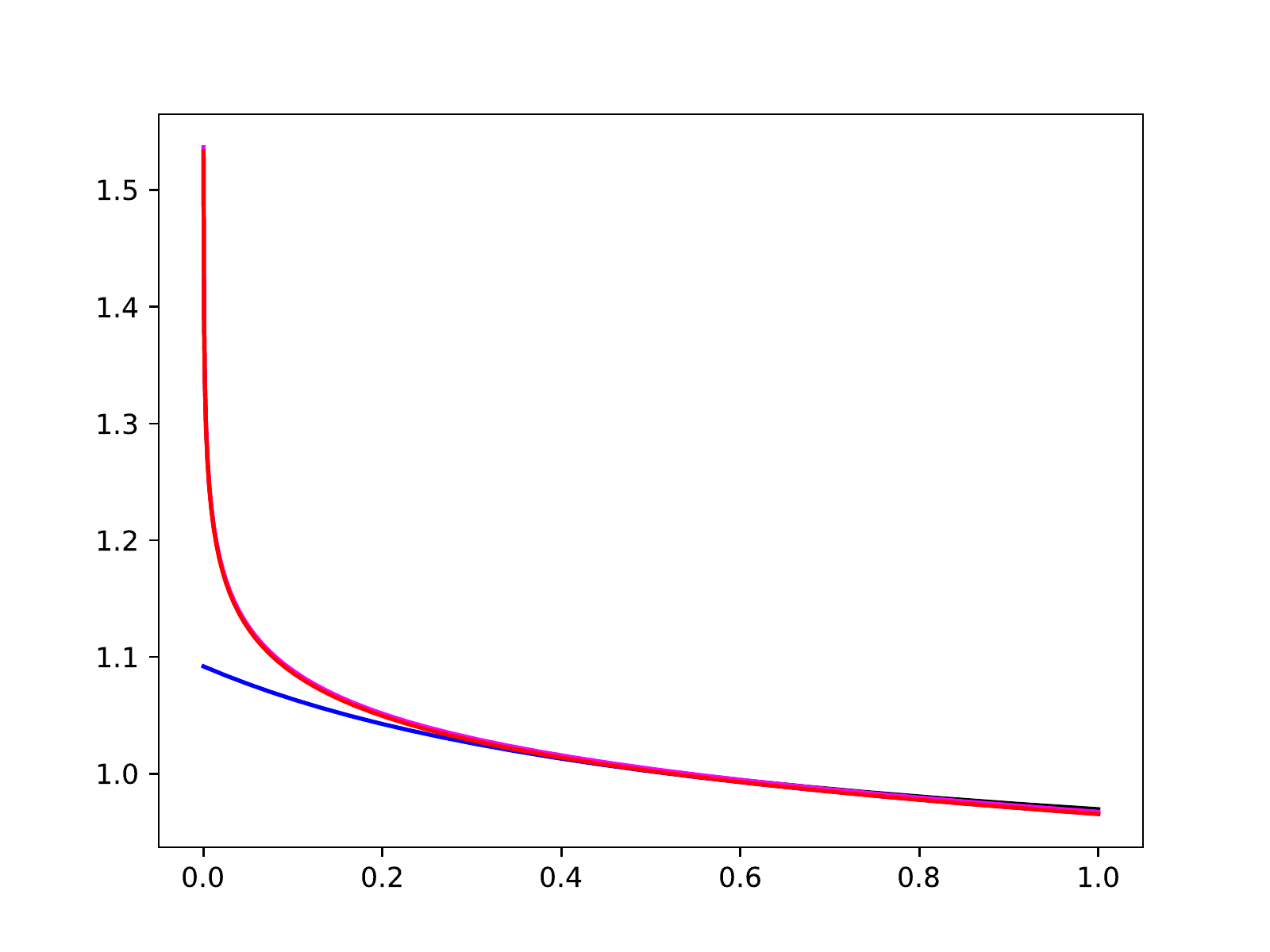} }
  
  \subfigure[][$H=0.25$, $n=5$]{  \includegraphics[width=0.48\linewidth]{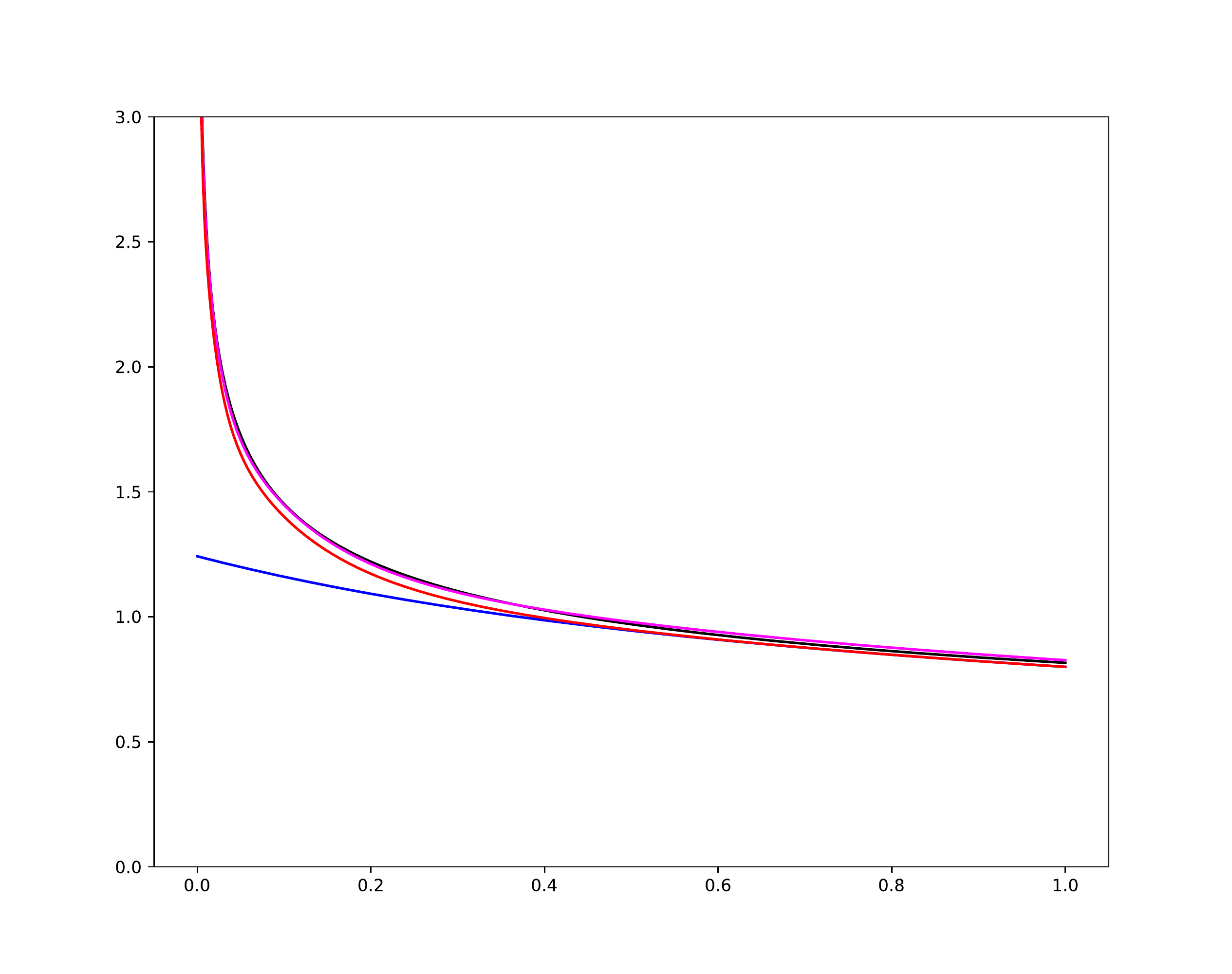}}
  \subfigure[][$H=0.25$, $n=10$]{ \includegraphics[width=0.46\linewidth]{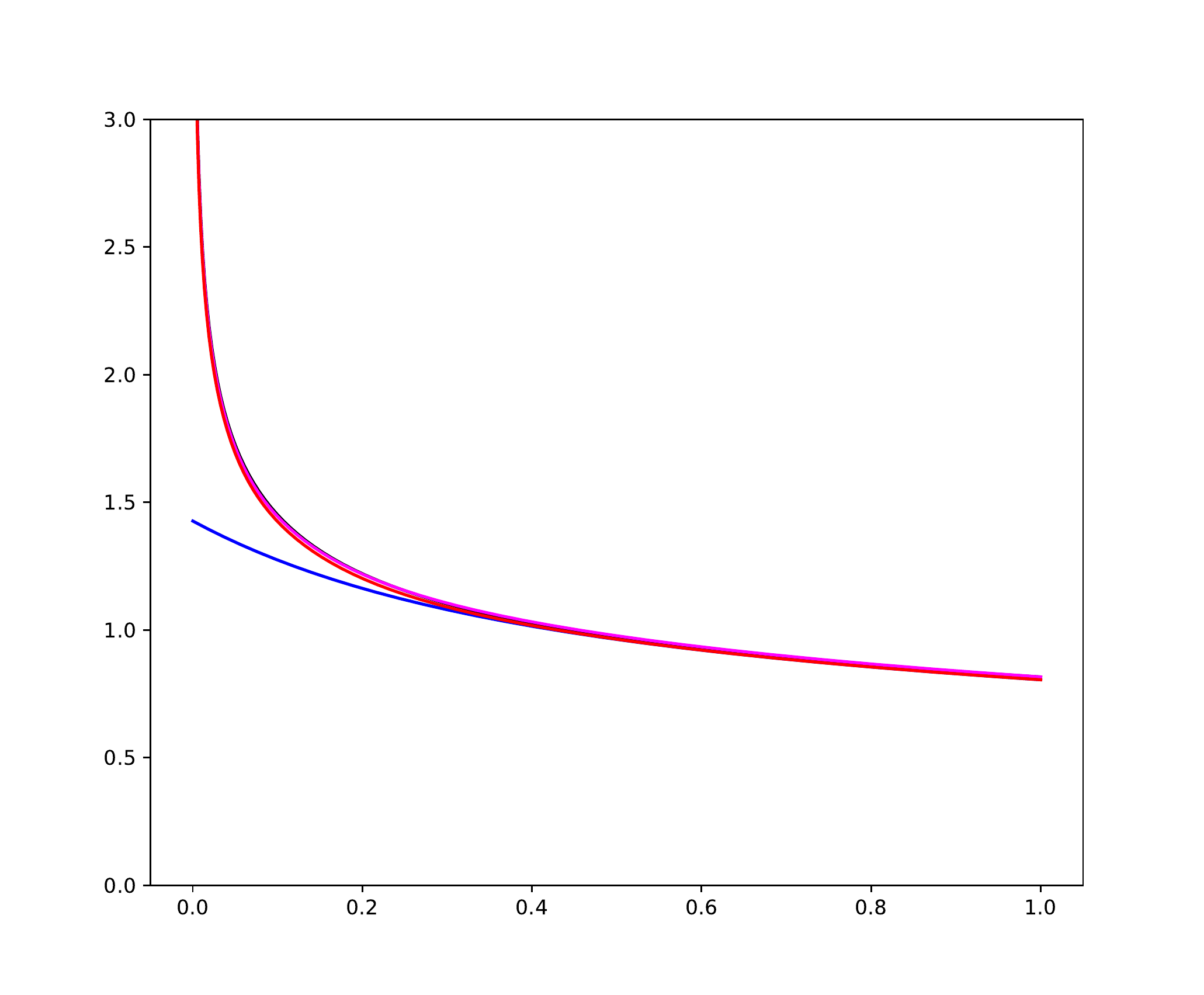}}
  \subfigure[][$H=0.05$, $n=10$]{  \includegraphics[width=0.47\linewidth]{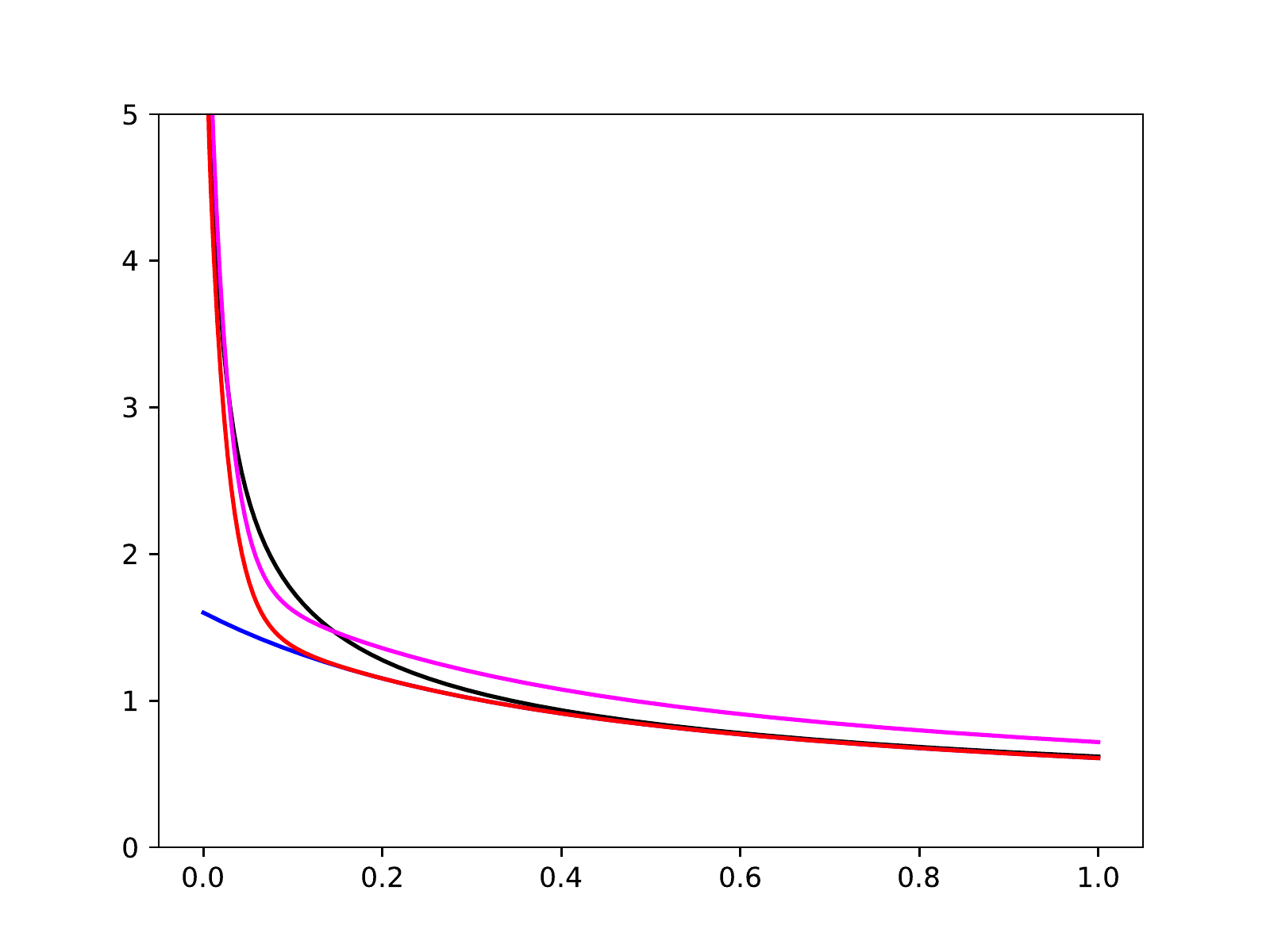}}
  \subfigure[][$H=0.05$, $n=40$]{ \includegraphics[width=0.47\linewidth]{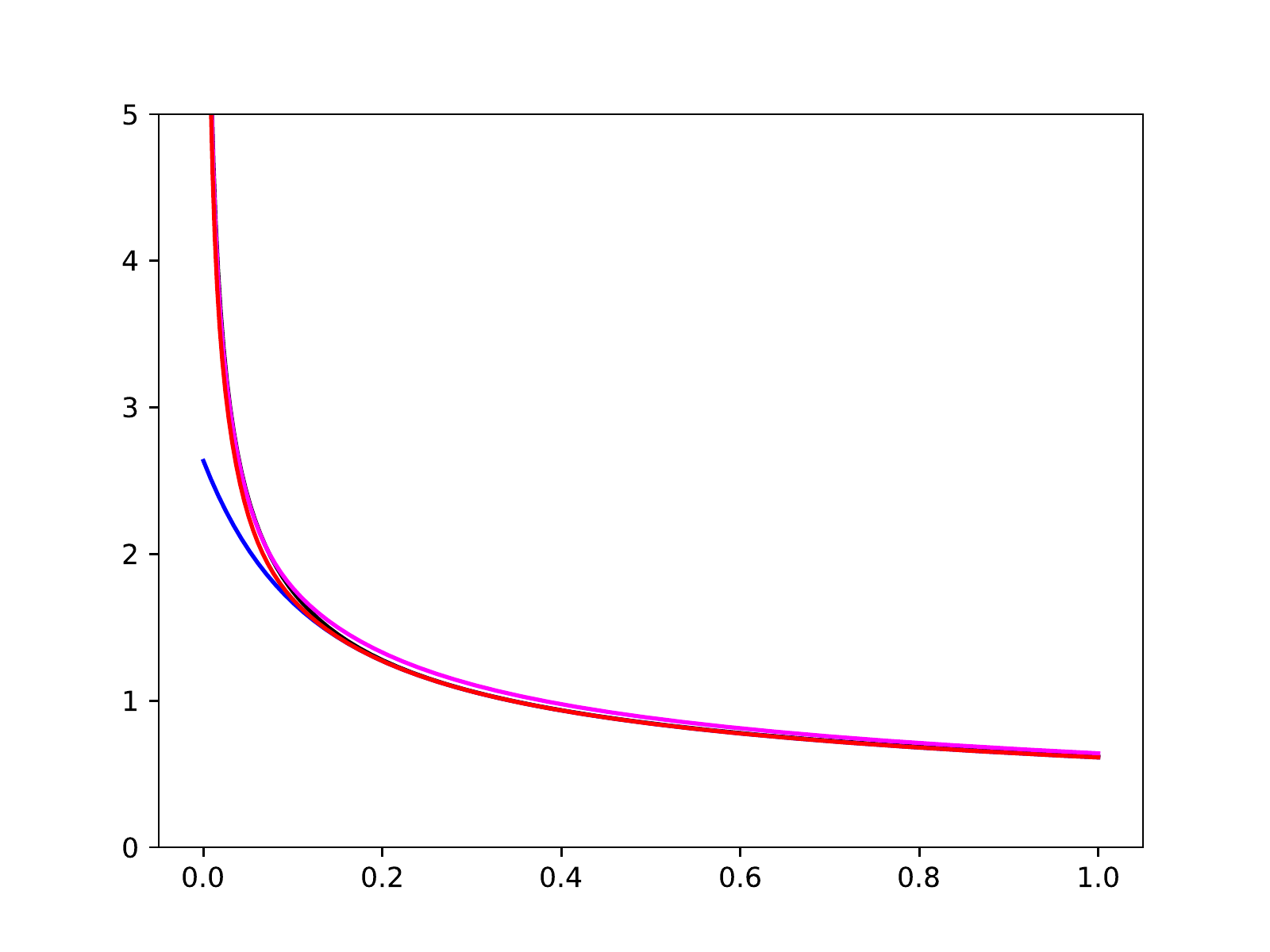}}
  \caption{Plots of $G_{\lambda_H}(t)=\frac{t^{H-1/2}}{\Gamma(H+1/2)}$ (black), $\hat{G}^{n^{4/5}}(t)$ (blue), $\hat{G}^{n^{4/5},A^* }(t)$ (red) and $\hat{G}^{sys}_{2n}(t)$ (magenta) for different values of $H$ and $n$.}\label{figure_G}
  \end{figure}

 Now, we discuss the choice of~$A$. By Theorem~\ref{thm_non_asymp}, $\int_0^T(G_{\lambda_H}(t)-\hat{G}^{K,A}(t))^2dt$ is a natural criterion to assess the quality of the approximation. Besides, we know by Lemma~\ref{lem_calcul_L2} that this quantity can be calculated easily. Thus, it is natural to find $A^*$ that minimizes $\int_0^T(G_{\lambda_H}(t)-\hat{G}^{K,A}(t))^2dt$. This can be done in practice by using a one-dimensional optimization routine.

 Last, once $A^*$ has been calculated, we still notice that we have $\hat{G}^{K,A^*}(t)\le G_{\lambda_H}(t)$ by Proposition~\ref{prop_1d_2}. Therefore, there exists $\xi^*\ge 1$ that minimizes $\int_0^T(G_{\lambda_H}(t)-\xi\hat{G}^{K,A^*}(t))^2dt$, namely
 $$ \xi^* = \frac{ \int_0^T G_{\lambda_H}(t)\hat{G}^{K,A^*}(t)dt}{ \int_0^T (\hat{G}^{K,A^*}(t))^2dt},  $$
 that can similarly as in Lemma~\ref{lem_calcul_L2} be calculated exactly by the mean of the Gamma incomplete function. Let us note that with this last adjustment, the approximation $\xi^*G^{K,A^*}$ is still completely monotone, which may be an interesting property to preserve.

 Figure~\ref{figure_G} illustrates for different values of $H$ the different approximations of the rough kernel. It shows the  interest of the progressive steps  of our approximations from $\hat{G}^{n^{4/5}}(t)$ to $\hat{G}^{n^{4/5},A^* }(t)$ and then to  $\xi^*\hat{G}^{n^{4/5},A^*}(t)$.  Here, and from now on, we set for $n\in 2\N^*$
 \begin{equation}\label{def_sys}
   \hat{G}^{sys}_n(t):= \xi^*\hat{G}^{(n/2)^{4/5},A^*}(t),
 \end{equation}
the approximation obtained with the systematic approach that uses a combination of $n$ exponential functions.

 We first observe that the approximation  $\hat{G}^{n^{4/5}}(t)$ provided by Proposition~\ref{prop_1d} is not accurate close to time zero, due to the truncation. For $H=0.45$ (resp. $H=0.25$), the approximation provided by $\hat{G}^{n^{4/5},A^* }(t)$ and $\hat{G}^{sys}_{2n}(t)$ are quite perfect for $n=5$ (resp. $n=10$). For $H=0.05$ and $n=10$, one better observes the role of the parameter~$\xi^*$ that shifts upward the approximation so that it crosses $G_{\lambda_H}$ at some optimal point to minimize the $L^2$ error. For $n=40$ the approximation of the rough kernel is quite perfect. We have indicated in Table~\ref{table_approx} the corresponding $L^2$ errors between $\hat{G}^{sys}_{n}$ and the rough kernel for different values of $H$ and~$n$.
  
    \begin{table}[H]
   \begin{tabular}{|r||l|c|}
     \hline
     $H$& $n$  & $\sqrt{\int_0^1( G_{\lambda_H}(t)-\hat{G}^{sys}_{n}(t))^2dt}$ \\
     \hline
      0.45 & 10   & 0.00209   \\  
     0.45 & 20   &  0.00107  \\ 
     \hline
      0.25 & 20   & 0.0134   \\ 
      0.25 & 40   &  0.0049  \\
      \hline
       0.05 & 40  & 0.189   \\
     0.05 & 80   &  0.084  \\ 
     \hline
   \end{tabular}
   \caption{Values of the $L^2$ error between the rough kernel and its approximation by the systematic approach.}\label{table_approx}
    \end{table}

\subsection{Application to the Rough Bergomi model}
In this subsection, we give a practical application and consider the pricing of European call options with the Rough Bergomi model. This model is interesting to test our kernel approximations since we are able to sample exactly both the model and its approximation, without any additional discretization error. Therefore, the only bias comes from our approximation. 
 We thus consider a two dimensional Brownian motion~$W$ and the following dynamics:
\begin{align}
  S_t&=S_0\exp \left( \int_0^t \sqrt{\nu_s}(\rho dW^1_s + \sqrt{1-\rho^2} dW^2_s ) - \frac12 \int_0^t \nu_s ds \right),\\
  \nu_t&=\nu_0 \exp \left(\eta \sqrt{2H} \int_0^t (t-s)^{H-1/2} dW^1_s -\frac{\eta^2}{2}t^{2H}\right).
\end{align}

We first describe the algorithm of Bayer et al.~\cite{BaFrGa}. It consists in discretizing the time interval $[0,T]$ with $N$ time steps. Thus, one has to simulate the Gaussian vector $(\int_0^{\frac lN T} (\frac lN T-s)^{H-1/2} dW^1_s, W^1_{\frac lN T})_{l=1,\dots,N}$ by computing a Cholesky decomposition of the covariance matrix. Then, the values of $\nu_{\frac lN T}$ are sampled exactly, and one approximate $S$ with the following scheme, for $l\in\{1,\dots,N\}$:
$$\hat{S}_{\frac lN T}=\hat{S}_{\frac{l-1}N T}\exp \left( \nu_{\frac{l-1}N T} \left(\rho(W^1_{\frac lN T}-W^1_{\frac{l-1}N T})+\sqrt{1-\rho^2}(W^2_{\frac lN T}-W^2_{\frac{l-1}N T})\right) - \frac 12 \nu_{\frac{l-1}N T} \frac T N \right).$$

      Here, we furthermore approximate $\nu$ by using an approximation of the rough kernel. Namely we use that
      \begin{align*}\sqrt{2H} \int_0^t (t-s)^{H-1/2} dW^1_s &= \sqrt{2H}\Gamma(H+1/2) \int_0^tG_{\lambda_H}(t-s)dW^1_s\\ &\approx \sqrt{2H}\Gamma(H+1/2) \int_0^t \hat{G}^{sys}_n(t-s)dW^1_s.
      \end{align*}
      Since the approximation is a combination of exponential functions, we can simulate it exactly by the Gaussian vector $(\int_0^{\frac lN T} \exp \left(-\rho_i(\frac lN T-s) \right) dW^1_s, W^1_{\frac lN T})_{l\in\{1,\dots,N\}, i}$ again by computing a Cholesky decomposition of the covariance matrix. Then, we define the following approximation of~$\nu$ with $\bar{c}=\eta \sqrt{2H}\Gamma(H+1/2)$:
      \begin{equation}
        \hat{\nu}_{\frac lN T }=\nu_{\frac {l-1}N T} \exp \left( \bar{c} \int_0^t \hat{G}^{sys}_n(t-s)dW^1_s - \frac 12 \bar{c}^2 \int_0^t \hat{G}^{sys}_n(t-s)^2ds\right).
      \end{equation}
      Note that the integral $ \int_0^t \hat{G}^{sys}_n(t-s)^2ds$ can be easily  calculated exactly. We notice that it is important in numerical applications to compute it instead of using $\frac{\eta^2}2[ (\frac lN T)^{2H}-(\frac {l-1}N T)^{2H}]$ that introduces some bias. This slight modification improves significantly the numerical results in approximating the smile curve.

\begin{figure}[H]
    \centering
  \includegraphics[width=0.8\linewidth]{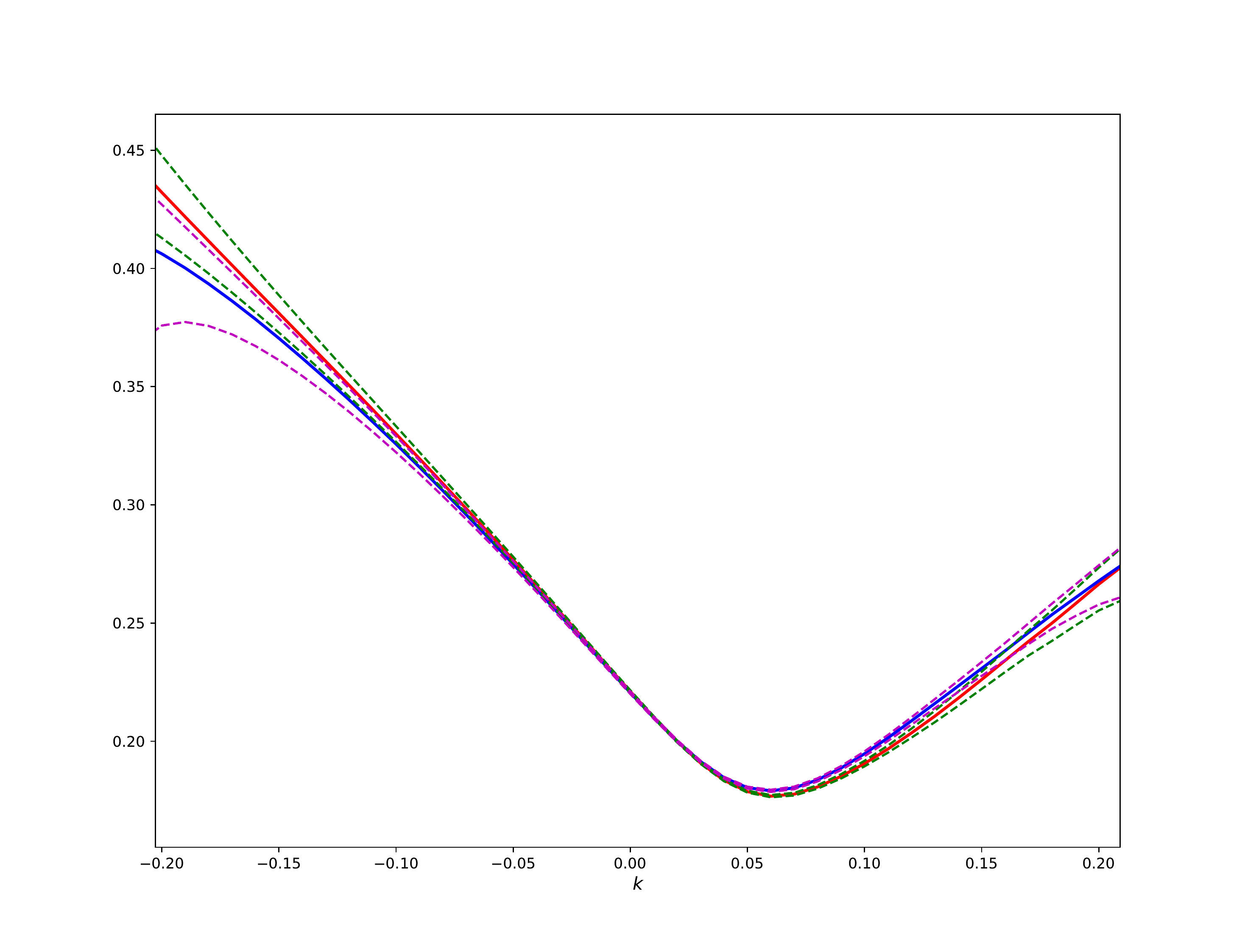} 
  \caption{Implicit volatility of the Call option with strike $e^k$ obtained by the Monte-Carlo estimator: the method of Bayer et al~\cite{BaFrGa} in blue, our proposed approximation in red. Respective 95\% confidence intervals delimited with dotted lines in magenta and green. Parameters: $H=0.07$, $S_0=1$, $v_0=0.235^2$, $\eta=1.9$, $\rho=-0.9$, $T=0.041$, $N=20$ and the approximation kernel $\hat{G}^{sys}_n(t)$ with $n=20$.}
\label{figure_smiles}
  \end{figure}
      
In Figure~\ref{figure_smiles}, we have plotted the smile obtained by the algorithm of Bayer et al.~\cite{BaFrGa}. We have taken back the parameter sets of this paper (also taken in Bennedsen et al.~\cite{BeLuPa}) and we focus on their most challenging example, i.e. the one with short maturity $T=0.041$. We have approximated by Monte-Carlo the value of $\E[(S_T-e^{k})^+]$ with $10^6$ samples. The approximation that we propose is very close to the smile produced by the method proposed in~\cite{BaFrGa}, which shows its relevance. Note that on this specific example, there is no particular advantage to use our kernel approximation rather than the one of Bayer et al.~\cite{BaFrGa} since everything can be sampled exactly. However, if one uses for the volatility a more involved Volterra SDE with the rough kernel, exact sampling is no longer possible while our kernel approximations can still be used since they correspond to a classical SDE in a higher dimension. This is the purpose of the next subsection. 

  \subsection{Comparison between different numerical schemes for the rough Heston model}\label{subsec_rHeston}

We now focus on the more challenging case of the rough Heston model introduced by~\cite{EleRos}. This model has the following dynamics
\begin{equation} \label{eq:RoughHeston}
	\begin{split}
		S_t&=S_0+\int_0^t S_s\, \sqrt{V_s}~d \big( \rho\, W_s+\sqrt{1-\rho^2}\, W_s^{\bot} \big),\\
		V_t&=V_0+\int_0^t G_{\lambda_H}(t-s) \Big( \big( \theta-\lambda V_s \big)~ds + \sigma \sqrt{V_s}~dW_s \Big),
	\end{split}
	\end{equation}
where $(W, W^{\bot})$ are two independent Brownian motions, $G_{\lambda_H}(t)$ is the rough kernel function~\eqref{def_rough_kernel}, $S_0,V_0,\theta,\lambda,\sigma > 0$ and $\rho\in [-1,1]$. To approximate this process, it is more convenient to work with $Y_t=\log(S_t)$.

\subsubsection{Presentation of the Volterra Euler scheme, the multifactor and the hybrid multifactor Euler schemes}\label{subsec_compar}

Richard et al.~\cite{RTY,RTY2} have studied the Volterra Euler scheme for general SVE with Lipschitz coefficients. For~\eqref{eq:RoughHeston}, they consider the following scheme on the time grid $t_k=kT/N$, $k\in\{0,\dots N-1\}$:
\begin{equation} \label{eq:Euler RoughHeston}
	\begin{split}
		Y^N_{t_{k+1}}&=Y^N_{t_{k}}- \frac 12 (V^N_{t_k})_+ \frac TN +\sqrt{ (V^N_{t_k})_+} \left(\rho (W_{t_{k+1}}-W_{t_k})+\sqrt{1-\rho^2} (W^{\bot}_{t_{k+1}}-W^{\bot}_{t_k}) \right) ,\\
		V^N_{t_{k+1}}&=V_0+\sum_{j=0}^k G_{\lambda_H}\left((k+1-j)\frac TN \right) \Big( \big( \theta-\lambda (V^N_{t_j})_+ \big)\frac{T}{N} + \sigma \sqrt{ (V^N_{t_j})_+} (W_{t_{j+1}}-W_{t_j}) \Big). 
	\end{split}
	\end{equation}
Due to summation in the definition of $V^N_{t_k+1}$, the computational complexity is of order of $N^2$.

We consider an approximating kernel $\hat{G}(t)=\sum_{i=1}^n\alpha_i e^{-\rho_i t}$ with $\alpha_i>0$ and $0\le \rho_1<\dots<\rho_n$. We now write the multifactor Euler scheme corresponding to the multidimensional SDE approximation~\eqref{SDE_approx_1}.  For this aim, we can use for $V$ the multifactor Euler scheme given by~\eqref{def_Euler_Scheme_SDE}, which  leads to the following numerical scheme:
\begin{equation} \label{eq:Euler RoughHeston_SDE}
	\begin{split}
		\hat{Y}^N_{t_{k+1}}&=\hat{Y}^N_{t_{k}}- \frac 12 (\hat{V}^N_{t_k})_+ \frac TN +\sqrt{ (\hat{V}^N_{t_k})_+} \left(\rho (W_{t_{k+1}}-W_{t_k})+\sqrt{1-\rho^2} (W^{\bot}_{t_{k+1}}-W^{\bot}_{t_k}) \right) ,\\
		 \hat{V}^{i,N}_{t_{k+1}}&=e^{-\rho_i \frac TN}\left( \hat{V}^{i,N}_{t_{k}}+ (\theta-\lambda (\hat{V}^{N}_{t_k})_+)\frac TN +\sigma \sqrt{(\hat{V}^{N}_{t_k})_+} (W_{t_{k+1}}-W_{t_k})\right), \ 1\le i\le n \\
                \hat{V}^N_{t_{k+1}}&=V_0+\sum_{i=1}^n \alpha_i \hat{V}^{i,N}_{t_{k+1}},
	\end{split}
\end{equation}
with $\hat{V}^{i,N}_{t_{0}}=0$. Note that by Theorem~\ref{prop_euler}, $\hat{V}^N_{t_{k+1}}$ satifies the same recurrence formula as $V^N_{t_{k+1}}$ in~\eqref{eq:Euler RoughHeston}, when replacing $G_{\lambda_H}$ by $\hat{G}$.  
Unlike the scheme~\eqref{eq:Euler RoughHeston}, this scheme has a computational complexity of order $n\times N$. This is a clear advantage of our scheme when $N$ gets large. Besides,  we can reduce $n$ as follows by using the idea of Corollary~\ref{lem_ntilde}, even if the diffusion coefficient is not Lipschitz. More precisely, we define (we take $\beta=1$ in~\eqref{def_ntilde} since it leads to accurate results)
\begin{equation}\label{def_ntilde_2}
  \tn=\inf \{ k \in \{1,\dots,n\} :  \sum_{i=k+1}^n \alpha_i e^{-\rho_i \frac T N} \le \frac TN  \text{ for } i\ge k+1 \}.
\end{equation}
Then, we simply consider
\begin{equation} \label{eq:Euler RoughHeston_SDE_bis}
	\begin{split}
		\tilde{Y}^N_{t_{k+1}}&=\tilde{Y}^N_{t_{k}}- \frac 12 (\tilde{V}^N_{t_k})_+ \frac TN +\sqrt{ (\tilde{V}^N_{t_k})_+} \left(\rho (W_{t_{k+1}}-W_{t_k})+\sqrt{1-\rho^2} (W^{\bot}_{t_{k+1}}-W^{\bot}_{t_k}) \right) ,\\
		 \tilde{V}^{i,N}_{t_{k+1}}&=e^{-\rho_i \frac TN}\left( \tilde{V}^{i,N}_{t_{k}}+ (\theta-\lambda (\tilde{V}^{N}_{t_k})_+)\frac TN +\sigma \sqrt{(\tilde{V}^{N}_{t_k})_+} (W_{t_{k+1}}-W_{t_k})\right),  \ 1\le i\le \tn \\
                \tilde{V}^N_{t_{k+1}}&=V_0+\sum_{i=1}^{\tn} \alpha_i \tilde{V}^{i,N}_{t_{k+1}}.
	\end{split}
	\end{equation}

Very recently, an hybrid multifactor scheme has been proposed by R\o mer~\cite{Romer}  that combines the hybrid approximation proposed by Bennedsen et al.~\cite{BeLuPa} for general kernels and the well-known multifactor approximation of  completely monotone kernels. The principle of this scheme is to approximate $V_{t_{k+1-\kappa}}$ for some $\kappa \in \N^*$ by using the multifactor approximation (denoted here by $\check{V}^{multi}_{t_{k+1-\kappa}}$) and then approximate $V_{t_{k+1}}$ by
\begin{align*}
 & V_{t_{k+1}}= V_{t_{k+1-\kappa}}+\int_{t_{k+1-\kappa}}^{t_{k+1}}G_{\lambda_H}(t_{k+1}-s)[(\theta-\lambda V_s)ds+\sigma \sqrt{V_s}dW_s] \\
  &\approx \check{V}^{multi}_{t_{k+1-\kappa}} +\sum_{i=1}^\kappa (\theta-\lambda V_{t_{k+1-i}}) \int_{t_{k+1-i}}^{t_{k+2-i}}G_{\lambda_H}(t_{k+1}-s)ds + \sigma \sqrt{ V_{t_{k+1-i}}}\int_{t_{k+1-i}}^{t_{k+2-i}}G_{\lambda_H}(t_{k+1}-s)dW_s.
\end{align*}
As noticed by Bennedsen et al~\cite{BeLuPa} and then by R\o mer~\cite{Romer}, the choice $\kappa=1$ is usually sufficient in practice, which leads to the following scheme~\cite[Definition 1]{Romer} for $k\in\{0,\dots,N-1\}$:
\begin{align} 
		\check{Y}^N_{t_{k+1}}&=\check{Y}^N_{t_{k}}- \frac 12 (\check{V}^N_{t_k})_+ \frac TN +\sqrt{ (\check{V}^N_{t_k})_+} \left(\rho (W_{t_{k+1}}-W_{t_k})+\sqrt{1-\rho^2} (W^{\bot}_{t_{k+1}}-W^{\bot}_{t_k}) \right) , \notag\\
		 \check{V}^{i,N}_{t_{k+1}}&=\frac 1{1+\rho_i \frac TN} \left( \check{V}^{i,N}_{t_{k}}+ (\theta-\lambda (\check{V}^{N}_{t_k})_+)\frac TN +\sigma \sqrt{(\check{V}^{N}_{t_k})_+} (W_{t_{k+1}}-W_{t_k})\right), \ 1\le i\le n \notag\\
                 \check{V}^N_{t_{k+1}}&=\check{V}^{multi}_{t_{k}}  + (\theta-\lambda(\check{V}^N_{t_{k}})_+ ) \int_0^{T/N}G_{\lambda_H}(s)ds   + \sigma \sqrt{(\check{V}^N_{t_{k}})_+}\int_{t_{k}}^{t_{k+1}}G_{\lambda_H}(t_{k+1}-s)dW_s \notag\\
             \check{V}^{multi}_{t_{k}}    &=V_0+\sum_{i=1}^n \alpha_i e^{-\rho_i \frac TN} \check{V}^{i,N}_{t_{k}}. \label{eq:Euler RoughHeston_hybrid}
\end{align}
One needs to sample exactly  the Gaussian vector $\left(W_{t_{k+1}}-W_{t_k} ,\int_{t_{k}}^{t_{k+1}}G_{\lambda_H}(t_{k+1}-s)dW_s\right)$ which has an explicit covariance matrix.  Note that this hybrid multifactor scheme has similarities with~\eqref{eq:Euler RoughHeston_SDE}. The exponential factor  is replaced by $\frac 1{1+\rho_i \frac TN}$, which does not change that much in practice. 
Thus, the main difference between the two schemes is the approximation on the last step.

\subsubsection{Numerical Results}\label{subsec_num_schemes}

To test these schemes, we have taken back the numerical experiments of Richard et al.~\cite{RTY,RTY2} with the following parameters: $V_0=\theta=0.02$, $\lambda=0.3$, $\sigma=0.3$, $\rho=-0.7$, $S_0=1$, $H=0.1$. We first compute the European call price $\E[(S_T-K)_+]$ with strike $K=1$, maturity $T=1$ and zero interest rates. The approximated exact value of this option computed using Fourier pricing techniques is $0.05683$. We use the approximating kernel $\hat{G}^{sys}_n$ given by the systematic approach~\eqref{def_sys} and then the selection of the first $\tilde{n}$ components given by~\eqref{def_ntilde_2}. The corresponding $L^2$ error is $\sqrt{\int_0^1(\hat{G}^{sys}_n(t)-G_{\lambda_H}(t))^2dt }\approx 0.01523$ and the discrete $L^2$ errors are
{\small \begin{align}
  \sqrt{\frac T N \sum_{k=1}^N (\hat{G}^{sys}_n(kT/N)-G_{\lambda_H}(kT/N))^2}&\approx 0.00783628, \label{val_num_approx} \\ \sqrt{\frac T N \sum_{k=1}^N (\sum_{i=1}^{\tilde{n}} \alpha_i e^{-\rho_ikT/N}-G_{\lambda_H}(kT/N))^2} &\approx 0.00783633, \notag
  \end{align}
}

\noindent for $N=160$ and $\tilde{n}=55$. This indicates in view of Corollary~\ref{cor_euler} why the prices obtained with the Euler scheme and the multifactor Euler scheme are very close in Tables~\ref{table_1},~\ref{table_3}. Also, in view of Theorem~\ref{thm_non_asymp_scheme}, the difference between the Euler schemes~$\hat{Y}^N$ and $\tilde{Y}^N$ is very small, while the time complexity are respectively proportional to $n\times N$ and $\tilde{n} \times N$, which is a clear gain of  the acceleration procedure given by~\eqref{def_ntilde_2} in view of Corollaries~\ref{cor_euler} and~\ref{lem_ntilde}. For the different values of $N$ and $n=100$, we typically have values of $\tilde{n}$ between $50$ and $65$.  For a fair comparison between our multifactor Euler scheme and the hybrid multifactor scheme, we have used the same  $\alpha$'s and $\rho$'s coming from $\hat{G}^{sys}_n$ given by the systematic approach~\eqref{def_sys} with $n=100$ (i.e. $n=100$ values of $\alpha$ and  $\rho$), and combined with the selection of the $\tilde{n}$ first components given by~\eqref{def_ntilde_2}. 

{\scriptsize
 \begin{table}
   \begin{tabular}{|r||l|l|l||l|l|l||l|l|l|}
     \hline
     $N$& Mean  & 95\% prec. & Time (s) & Mean   & 95\% prec. & Time (s)& Mean   & 95\% prec. & Time (s)  \\
     \hline
     10 &0.05922   & 1.5e-4   & 10   & 0.05919   &  1.5e-4    &  3  & 0.06767 & 1.8e-4 & 12 \\
     20& 0.05883  & 1.5e-4   & 32  & 0.05868    &  1.5e-4  & 13 & 0.06619 & 1.7e-4 & 36 \\
     40& 0.05848  & 1.4e-4   & 67 & 0.05845  & 1.4e-4   & 50 & 0.06471  & 1.6e-4 & 73 \\
     80 & 0.05821 & 1.4e-4  & 134  & 0.05814  & 1.4e-4   & 198 & 0.06337 & 1.6e-4 & 144  \\
     160 & 0.05801 & 1.4e-4   & 274 & 0.05780  & 1.4e-4   & 745 &0.06225 & 1.5e-4 & 300 \\
     320 & 0.05777 & 1.4e-4 & 583 & 0.05783 & 1.4e-4  & 3136 & 0.06135 & 1.5e-4& 614 \\ 
     \hline
   \end{tabular}
    \vspace{0.1cm}
   \caption{Price of the European call option in the rough Heston model by using the multifactor Euler scheme (left), the Volterra Euler scheme (middle) and the hybrid multifactor scheme (right). }\label{table_1}
 \end{table}

  \begin{table}
   \begin{tabular}{|r||l|l|l||l|l|l||l|l|l|}
     \hline
     $N$& Mean  & 95\% prec. & Time (s) & Mean   & 95\% prec. & Time (s)& Mean   & 95\% prec. & Time (s)   \\
     \hline
     10 & 0.08134  & 1.5e-4 & 11 & 0.08153  & 1.5e-4  & 3 &  0.09010 & 1.6e-4 & 12\\
     20& 0.08563&   1.4e-4 & 32  &0.08559  &  1.4e-4 & 13 & 0.09320 & 1.5e-4 & 35 \\
     40& 0.08835&  1.4e-4  & 68 & 0.08861  &  1.4e-4 & 51 & 0.09543 & 1.6e-4 & 73 \\
     80 &  0.09047 &  1.4e-4 & 136 & 0.09069  &  1.4e-4 & 199 & 0.09693 & 1.5e-4 & 152\\
     160 & 0.09193 &  1.4e-4 & 279  & 0.09204 &  1.4e-4 & 743 & 0.09766 & 1.5e-4 & 291 \\
     320 & 0.09294 &  1.4e-4 & 561  & 0.09310  &  1.4e-4 & 3143 & 0.09778 & 1.5e-4 & 636\\
     \hline
   \end{tabular}
   \vspace{0.1cm}
   \caption{Price of the Lookback call option in the rough Heston model by using the multifactor Euler scheme (left), the Volterra Euler scheme (middle) and the hybrid multifactor scheme (right).}\label{table_3}
  \end{table}

}

We have indicated in Table~\ref{table_1}, for the three schemes, the value of the Monte-Carlo estimator with a sample of size~$10^6$ with the corresponding precision (half-with of the 95\% confidence interval)  and computation time. As expected from Theorem~\ref{prop_euler}, the values obtained by the Volterra Euler scheme and the multifactor Euler scheme are quite close. However, as noticed in our complexity analysis of both schemes, the smaller is the time step, the greater is the gain in favour of the multifactor Euler scheme. The hybrid multifactor scheme has a slightly higher computational cost with respect to the multifactor Euler scheme, which is due to the sampling of the Gaussian vector. The approximation induced by this latter random vector leads to a larger bias with respect to the other methods. This may be explained by the large variance of $\int_{t_{k}}^{t_{k+1}}G_{\lambda_H}(t_{k+1}-s)dW_s$  compared to the one of $W_{t_{k+1}}-W_{t_k}$ and by the convexity of the payoff. The hybrid multifactor scheme leads then to a higher price than the one of the multifactor scheme, which is already above the theoretical price. The slow convergence of the hybrid multifactor scheme is also noticed by~\cite[Figure 9  and comments below]{Romer}.

We then compute the Lookback call option prices $\E[(\max_{t\in[0,T]}S_t-K)_+]$ with the maximum approximated by $\max_{0\le k\le N}S_{t_k}$. We have indicated in Table~\ref{table_3}  the values of the Monte-Carlo estimators with a sample of size~$10^6$ with the corresponding precision and computation time. We notice again that the values obtained by the Volterra Euler scheme and the multifactor Euler scheme are close, and that the gain in computation time provided by the multifactor approximation gets more and more significant as the time step decreases. Contrary to European option case, we do not have a reference price for the Lookback option given by a semi explicit formula. Thus, we cannot say between the multifactor and the hybrid multifactor which one produces the lowest bias on this example.


\subsubsection{Alternative Euler scheme on the integrated volatility process}

Another way to simulate this process has been proposed by Richard et al.~\cite{RTY2}. It is based on  an alternative  writing of the rough Heston model based on the integrated volatility process $X$  with $X_t=\int_0^tV_sds$  has been proposed by Abi Jaber~\cite{abi2021weak} 
\begin{align}\label{eq:IntegVarHeston}
	\begin{split}
		S_t&=S_0+\int_0^t S_s~d \big(\rho M_s+\sqrt{1-\rho^2}M_s^{\bot} \big),\\
		X_t&=V_0 t +\int_0^t G_{\lambda_H}(t-s)\big(\theta s -\lambda X_s+\sigma M_s\big)~ds,
	\end{split}
	\end{align}
where $M,~M^{\bot}$ are two orthogonal continuous martingales with quadratic variation $\langle M \rangle  = \langle M^{\bot}\rangle =X$. Then, $Y_t=\log S_t$ satisfies 	$$ Y_t = Y_0 - \frac12 X_t +  \rho M_t + \sqrt{1-\rho^2}M^{\bot}_t. 		$$
Richard et al.~\cite{RTY2} propose an alternative discretization scheme based on approximating the martingales  $(M, M^{\bot})$: 
\begin{align}
   X^N_{t_{k+1}}&= V_0 t_{k+1} + \sum_{j=0}^{k} G_{\lambda_H}(t_{k+1}-t_j)\left( \theta t_j -\lambda \overline{X}^N_{t_j}+\sigma M^N_{t_j}\right),  k\in\{0,\dots,N-1\},\label{Euler_MAX}\\
  Y^N_{t_k}&=Y_0  - \frac12 \overline{X}^N_{t_k} +  \rho M^N_{t_k} + \sqrt{1-\rho^2}M^{N,\bot}_{t_k}, k\in\{1,\dots,N\}, \notag\\
M^N_{t_k}&= \sum_{j=1}^k \sqrt{\overline{X}^N_{t_j}-\overline{X}^N_{t_{j-1}}}Z_j,
		M^{N,\bot}_{t_k} = \sum_{j=1}^k \sqrt{\overline{X}^N_{t_j}-\overline{X}^N_{t_{j-1}}}Z_j^{\bot}, k\in\{1,\dots,N\},\notag
\end{align}
with $\overline{X}^N_{t_j} := \underset{ 0\leq l\leq j }{\max} X^N_{t_l}$, $X^N_{t_0}=M^N_{t_0}=M^{N,\bot}_{t_0}=0$, and $(Z_j,Z_j^{\bot})_{j \ge 1}$ is a sequence of  i.i.d. random variables with standard Gaussian distribution $\mathcal{N}(0,1)$. They also prove in~\cite[Theorem 2.3]{RTY2} that the discretization scheme $(S^N, X^N)$ weakly converges to $(S, X)$.

Again, we consider an approximating kernel $\hat{G}(t)=\sum_{i=1}^n\alpha_i e^{-\rho_i t}$ with $\alpha_i>0$ and $0\le \rho_1<\dots<\rho_n$, and write the multifactor Euler scheme associated to~$X$, which leads to the following scheme:
\begin{align} 
  	 \hat{X}^{i,N}_{t_{k+1}}&=e^{-\rho_i \frac TN}\left( \hat{X}^{i,N}_{t_{k}}+ \big(\theta t_{k}-\lambda (\hat{X}^{N}_{t_{k}})_+ +\sigma \hat{M}^N_{t_{k}}\big)\frac TN \right), k\in \{0,\dots,N-1\} \notag\\
         \hat{X}^N_{t_k}&= V_0 t_k + \sum_{i=1}^{n} \alpha_i \hat{X}^{i,N}_{t_{k}}, k\in \{1,\dots,N\}, \label{Euler_MAX_SDE}\\
           \hat{Y}^N_{t_k}&=Y_0  - \frac12 \overline{\hat{X}}^N_{t_k} +  \rho \hat{M}^N_{t_k} + \sqrt{1-\rho^2}\hat{M}^{N,\bot}_{t_k}, k\in \{1,\dots,N\}, \notag\\
\hat{M}^N_{t_k}&= \sum_{j=1}^k \sqrt{\overline{\hat{X}}^N_{t_j}-\overline{\hat{X}}^N_{t_{j-1}}}Z_j,
		\hat{M}^{N,\bot}_{t_k} = \sum_{j=1}^k \sqrt{\overline{\hat{X}}^N_{t_j}-\overline{\hat{X}}^N_{t_{j-1}}}Z_j^{\bot}, k\in \{1,\dots,N\},\notag
\end{align}
with $\overline{\hat{X}}^N_{t_j} := \underset{ 0\leq l \leq j }{\max} \hat{X}^N_{t_l}$, $\hat{X}^{i,N}_{t_{0}}=\hat{M}^{N}_{t_0}=\hat{M}^{N,\bot}_{t_0}=0$. By Theorem~\ref{prop_euler}, $\hat{X}^{i,N}_{t_{k}}$ satisfies the recurrence formula~\eqref{Euler_MAX} replacing $G_{\lambda_H}$ by~$\hat{G}$. Besides, we can use the same idea as in Corollary~\ref{lem_ntilde} to reduce the dimension of this approximation: we thus build the approximation $(\tilde{X}^N,\tilde{Y}^N)$ associated to~\eqref{Euler_MAX_SDE}  which we have used in the next numerical experiments. 

We have indicated in Table~\ref{table_2} (resp. Table~\ref{table_4}) the values of the Monte-Carlo estimators associated to the schemes~\eqref{Euler_MAX} and~\eqref{Euler_MAX_SDE} for the European (resp. Lookback) Call option. We have taken the same parameters and the same approximating kernel as in Subsection~\ref{subsec_compar}.  We observe that the Volterra Euler scheme on the integrated volatility~\eqref{Euler_MAX} and the corresponding multifactor Euler scheme~\eqref{Euler_MAX_SDE} give very similar values. In particular, we get back the observation of Richard et al.~\cite{RTY2} that the scheme on the integrated volatility gives a lower bias than the  scheme on the volatility for the European option, but yields instead to a larger bias for the lookback option.  Again, the computation time required by the multifactor Euler scheme is much lower as the time step gets smaller, which shows the relevance of Scheme~\eqref{Euler_MAX_SDE}, and more generally the relevance of using the multifactor approximation of SVE with kernels of completely monotone type provided that we have an accurate approximation of the kernels. 
  \begin{table}[H]
    \begin{tabular}{|r||l|l|l||l|l|l|}
      \hline
      $N$& Mean  & 95\% prec. & Time (s) & Mean   & 95\% prec. & Time (s) \\
     \hline
     10 & 0.05791  & 1.5e-4  &12 & 0.05802 & 1.5e-4 & 3 \\
     20 & 0.05740  & 1.4e-4 & 41 & 0.05747 & 1.4e-4 & 13 \\
    40 & 0.05720  & 1.4e-4  & 88 & 0.05715 & 1.4e-4 & 50 \\
    80 & 0.05698  & 1.4e-4  & 187&  0.05689& 1.4e-4 & 196 \\
    160 & 0.05696 & 1.4e-4  & 408 &  0.05688  & 1.4e-4 & 767  \\
     \hline
    \end{tabular}
     \vspace{0.1cm}
   \caption{Price of the European call option in the rough Heston model by using the multifactor Euler scheme on the integrated volatility (left) and the Volterra Euler scheme on the integrated volatility (right).}\label{table_2}
    \end{table}

  \begin{table}[H]
   \begin{tabular}{|r||l|l|l||l|l|l|}
     \hline
     $N$& Mean. Val.  & 95\% prec. & Comp. time. & Mean. Val.  & 95\% prec. & Comp. time.  \\
     \hline
     10 &0.07784 &  1.4e-4 & 12 & 0.07765 &  1.4e-4 & 3 \\
     20 &0.08180 &  1.4e-4& 40  & 0.08186 &  1.4e-4 & 13\\
    40 & 0.08511 &  1.4e-4& 88  & 0.08510 &  1.4e-4 & 49  \\
    80 & 0.08770 &  1.4e-4& 189 & 0.08783 &  1.4e-4 & 194 \\
    160 &0.08964 &  1.4e-4& 402 & 0.08958 &  1.4e-4 & 775 \\
    320 & 0.09089&  1.4e-4& 831 & 0.09100 &  1.4e-4 & 3130 \\
     \hline
   \end{tabular}
   \vspace{0.1cm}
   \caption{Price of the Lookback call option in the rough Heston model by using the multifactor Euler scheme on the integrated volatility (left) and the Volterra Euler scheme on the integrated volatility (right).}\label{table_4}
  \end{table}

\bibliographystyle{abbrv}
\bibliography{Rough_CIR}

\begin{thebibliography}{10}

\bibitem{abi2021weak}
E.~Abi~Jaber.
\newblock Weak existence and uniqueness for affine stochastic volterra
  equations with {L}1-kernels.
\newblock {\em Bernoulli}, 27(3):1583--1615, 2021.

\bibitem{EleAbi}
E.~Abi~Jaber and O.~El~Euch.
\newblock Multifactor approximation of rough volatility models.
\newblock {\em SIAM J. Financial Math.}, 10(2):309--349, 2019.

\bibitem{Alfonsi}
A.~Alfonsi.
\newblock High order discretization schemes for the {CIR} process: application
  to affine term structure and {H}eston models.
\newblock {\em Math. Comp.}, 79(269):209--237, 2010.

\bibitem{BaFrGa}
C.~Bayer, P.~Friz, and J.~Gatheral.
\newblock Pricing under rough volatility.
\newblock {\em Quant. Finance}, 16(6):887--904, 2016.

\bibitem{BayFriGulHorSte}
C.~{Bayer}, P.~K. {Friz}, A.~{Gulisashvili}, B.~{Horvath}, and B.~{Stemper}.
\newblock {Short-time near-the-money skew in rough fractional volatility
  models}.
\newblock {\em {Quant. Finance}}, 19(5):779--798, 2019.

\bibitem{BHNU}
D.~Belomestny, S.~H\"{a}fner, T.~Nagapetyan, and M.~Urusov.
\newblock Variance reduction for discretised diffusions via regression.
\newblock {\em J. Math. Anal. Appl.}, 458(1):393--418, 2018.

\bibitem{BAK}
M.~Ben~Alaya and A.~Kebaier.
\newblock Central limit theorem for the multilevel {M}onte {C}arlo {E}uler
  method.
\newblock {\em Ann. Appl. Probab.}, 25(1):211--234, 2015.

\bibitem{BeLuPa}
M.~Bennedsen, A.~Lunde, and M.~S. Pakkanen.
\newblock Hybrid scheme for {B}rownian semistationary processes.
\newblock {\em Finance Stoch.}, 21(4):931--965, 2017.

\bibitem{BerMiz1}
M.~A. {Berger} and V.~J. {Mizel}.
\newblock {Volterra equations with Ito integrals - I}.
\newblock {\em {J. Integral Equations}}, 2:187--245, 1980.

\bibitem{BerMiz2}
M.~A. {Berger} and V.~J. {Mizel}.
\newblock {Volterra equations with Ito integrals - II}.
\newblock {\em {J. Integral Equations}}, 2:319--337, 1980.

\bibitem{CarCou1}
P.~{Carmona} and L.~{Coutin}.
\newblock {Fractional Brownian motion and the Markov property}.
\newblock {\em {Electron. Commun. Probab.}}, 3:95--107, 1998.

\bibitem{CarCou2}
P.~{Carmona}, L.~{Coutin}, and G.~{Montseny}.
\newblock {Approximation of some processes}.
\newblock {\em {Stat. Inference Stoch. Process.}}, 3(1-2):161--171, 2000.

\bibitem{CocLeePot}
W.~G. {Cochran}, J.-S. {Lee}, and J.~{Potthoff}.
\newblock {Stochastic Volterra equations with singular kernels}.
\newblock {\em {Stochastic Processes Appl.}}, 56(2):337--349, 1995.

\bibitem{CouDec}
L.~{Coutin} and L.~{Decreusefond}.
\newblock {Stochastic Volterra equations with singular kernels}.
\newblock In {\em {Stochastic analysis and mathematical physics}}, pages
  39--50. Boston: Birkh\"auser, 2001.

\bibitem{Dec}
L.~{Decreusefond}.
\newblock {Regularity properties of some stochastic Volterra integrals with
  singular kernel}.
\newblock {\em {Potential Anal.}}, 16(2):139--149, 2002.

\bibitem{EleRos}
O.~El~Euch and M.~Rosenbaum.
\newblock The characteristic function of rough {H}eston models.
\newblock {\em Math. Finance}, 29(1):3--38, 2019.

\bibitem{friz}
P.~K. Friz, P.~Gassiat, and P.~Pigato.
\newblock Short dated smile under rough volatility: asymptotics and numerics,
  2020.

\bibitem{Fuku1}
M.~{Fukasawa}.
\newblock {Short-time at-the-money skew and rough fractional volatility}.
\newblock {\em {Quant. Finance}}, 17(2):189--198, 2017.

\bibitem{Fuku2}
M.~Fukasawa.
\newblock Volatility has to be rough, 2020.

\bibitem{GatJaiRos}
J.~{Gatheral}, T.~{Jaisson}, and M.~{Rosenbaum}.
\newblock {Volatility is rough}.
\newblock {\em {Quant. Finance}}, 18(6):933--949, 2018.

\bibitem{Gil}
M.~B. Giles.
\newblock Multilevel {M}onte {C}arlo path simulation.
\newblock {\em Oper. Res.}, 56(3):607--617, 2008.

\bibitem{GriLonStaf}
G.~Gripenberg, S.-O. Londen, and O.~Staffans.
\newblock {\em Volterra integral and functional equations}, volume~34 of {\em
  Encyclopedia of Mathematics and its Applications}.
\newblock Cambridge University Press, Cambridge, 1990.

\bibitem{Harms}
P.~Harms.
\newblock Strong convergence rates for markovian representations of fractional
  processes.
\newblock {\em Discrete \& Continuous Dynamical Systems - B}, 0, 2020.

\bibitem{HarStef}
P.~{Harms} and D.~{Stefanovits}.
\newblock {Affine representations of fractional processes with applications in
  mathematical finance}.
\newblock {\em {Stochastic Processes Appl.}}, 129(4):1185--1228, 2019.

\bibitem{IsKe}
E.~Isaacson and H.~B. Keller.
\newblock {\em Analysis of numerical methods}.
\newblock Dover Publications, Inc., New York, 1994.
\newblock Corrected reprint of the 1966 original [Wiley, New York; MR0201039
  (34 \#924)].

\bibitem{JoLe}
B.~Jourdain and J.~Lelong.
\newblock Robust adaptive importance sampling for normal random vectors.
\newblock {\em Ann. Appl. Probab.}, 19(5):1687--1718, 2009.

\bibitem{Kusuoka}
S.~Kusuoka.
\newblock Approximation of expectation of diffusion processes based on {L}ie
  algebra and {M}alliavin calculus.
\newblock In {\em Advances in mathematical economics. {V}ol. 6}, volume~6 of
  {\em Adv. Math. Econ.}, pages 69--83. Springer, Tokyo, 2004.

\bibitem{LemPag2}
V.~Lemaire and G.~Pag\`es.
\newblock Unconstrained recursive importance sampling.
\newblock {\em Ann. Appl. Probab.}, 20(3):1029--1067, 2010.

\bibitem{LemPag1}
V.~Lemaire and G.~Pag\`es.
\newblock Multilevel {R}ichardson-{R}omberg extrapolation.
\newblock {\em Bernoulli}, 23(4A):2643--2692, 2017.

\bibitem{Newton}
N.~J. Newton.
\newblock Variance reduction for simulated diffusions.
\newblock {\em SIAM J. Appl. Math.}, 54(6):1780--1805, 1994.

\bibitem{NiVi}
S.~Ninomiya and N.~Victoir.
\newblock Weak approximation of stochastic differential equations and
  application to derivative pricing.
\newblock {\em Appl. Math. Finance}, 15(1-2):107--121, 2008.

\bibitem{ParPro}
E.~{Pardoux} and P.~{Protter}.
\newblock {Stochastic Volterra equations with anticipating coefficients}.
\newblock {\em {Ann. Probab.}}, 18(4):1635--1655, 1990.

\bibitem{Prot}
P.~{Protter}.
\newblock {Volterra equations driven by semimartingales}.
\newblock {\em {Ann. Probab.}}, 13:519--530, 1985.

\bibitem{RTY}
A.~Richard, X.~Tan, and F.~Yang.
\newblock Discrete-time simulation of stochastic {V}olterra equations.
\newblock {\em Stochastic Process. Appl.}, 141:109--138, 2021.

\bibitem{RTY2}
A.~Richard, X.~Tan, and F.~Yang.
\newblock On the discrete-time simulation of the rough heston model.
\newblock 2021.

\bibitem{Romer}
S.~E. R{\o}mer.
\newblock Hybrid multifactor scheme for stochastic volterra equations.
\newblock 2021.

\bibitem{Shinozaki}
Y.~Shinozaki.
\newblock Construction of a third-order {K}-scheme and its application to
  financial models.
\newblock {\em SIAM J. Financial Math.}, 8(1):901--932, 2017.

\bibitem{TaTu}
D.~Talay and L.~Tubaro.
\newblock Expansion of the global error for numerical schemes solving
  stochastic differential equations.
\newblock {\em Stochastic Anal. Appl.}, 8(4):483--509 (1991), 1990.

\bibitem{Wan}
Z.~{Wang}.
\newblock {Existence and uniqueness of solutions to stochastic Volterra
  equations with singular kernels and non-Lipschitz coefficients}.
\newblock {\em {Stat. Probab. Lett.}}, 78(9):1062--1071, 2008.

\bibitem{Wid}
D.~V. Widder.
\newblock {\em The {L}aplace {T}ransform}.
\newblock Princeton Mathematical Series, v. 6. Princeton University Press,
  Princeton, N. J., 1941.

\bibitem{Zhang_euler}
X.~{Zhang}.
\newblock {Euler schemes and large deviations for stochastic Volterra equations
  with singular kernels}.
\newblock {\em {J. Differ. Equations}}, 244(9):2226--2250, 2008.

\bibitem{Zhang}
X.~Zhang.
\newblock Stochastic {V}olterra equations in {B}anach spaces and stochastic
  partial differential equation.
\newblock {\em J. Funct. Anal.}, 258(4):1361--1425, 2010.

\end{thebibliography}
\end{document}